\documentclass{emsprocart}

\allowdisplaybreaks

%
\usepackage{epic,eepic} 
\usepackage[mathscr]{eucal} 

\numberwithin{equation}{section}
\newtheorem{Theorem}{Theorem}[section]
\newtheorem{Proposition}[Theorem]{Proposition}
\newtheorem{Lemma}[Theorem]{Lemma}

\newtheorem{Conjecture}[Theorem]{Conjecture}
\newtheorem{Problem}[Theorem]{Problem}
\theoremstyle{definition}
\newtheorem{Definition}[Theorem]{Definition}
\newtheorem{Example}[Theorem]{Example}
\newtheorem{Remark}[Theorem]{Remark}

\title[Periodicities in cluster algebras and dilogarithm identities]
{Periodicities in cluster algebras and dilogarithm identities
}

\author{Tomoki Nakanishi}
\contact[nakanisi@math.nagoya-u.ac.jp]{Tomoki Nakanishi,
 Graduate School of Mathematics, Nagoya University,
Nagoya, 464-8604, Japan}

\begin{document}

\begin{abstract}
We consider two kinds of periodicities of mutations in cluster algebras.
For any sequence of mutations under which exchange matrices are periodic,
we define the associated T- and Y-systems.
When the sequence is `regular',
they are particularly
natural generalizations of the known `classic' T- and Y-systems.
Furthermore, for any  sequence of mutations under which 
seeds are periodic,
we formulate the associated dilogarithm identity.
We prove the identities when  exchange matrices are skew symmetric.
\end{abstract}

\begin{classification}
Primary 13F60; Secondary 17B37
\end{classification}

\begin{keywords}
cluster algebras, T-systems, Y-systems, dilogarithm
\end{keywords}

\maketitle


\section{Introduction}

Cluster algebras were introduced by Fomin and Zelevinsky \cite{Fomin02}.
They naturally appear in several different areas of mathematics,
for example, in geometry of surfaces, in coordinate rings of
 algebraic varieties related to Lie groups,
in the representation theory of algebras,
and also in the representation theory of quantum groups, etc.
See \cite{Fock05,Gekhtman05,Geiss07,Caldero06,Buan06,
DiFrancesco09a}, to name a few.

The simplest and the most tractable class of cluster algebras
are the cluster algebras with finitely many  seeds.
They are called the {\em cluster algebras of finite type\/}
and play a fundamental role in many applications.
Fomin and Zelevinsky classified the cluster algebras
of finite type by the Dynkin diagrams
in their pioneering works \cite{Fomin03a,Fomin03b}.
They also clarified  the intimate relation between
 cluster algebras of finite type
 and root systems of finite type.
In particular, a remarkable periodicity property of mutations of seeds
was discovered and proved; it is related to the Coxeter elements
of the Weyl groups
\cite{Fomin03a,Fomin03b,Fomin07}.

In this paper we focus on two kinds of periodicities of
mutations in general cluster algebras.
The first one is the periodicity of {\em exchange matrices (or quivers)\/}
under a sequence of mutations.
In other words, the
 {\em exchange relations\/} of clusters and coefficient tuples are
periodic
under such a sequence of mutations.
The second one is the periodicity of {\em seeds\/}
under a sequence of mutations.
The latter periodicity implies the former one,
but the converse is not true.

Let us briefly explain the background for this study.
Many examples of such periodicities 
appeared in connection with
systems of algebraic relations called {\em T-systems and Y-systems\/}
\cite{Zamolodchikov91,Klumper92,Kuniba92,Ravanini93,Kuniba94a,
Hernandez07a}
and the 
 {\em dilogarithm identities}
\cite{Kirillov86,Kirillov89,Bazhanov90,Kirillov90,Kuniba93a,Gliozzi95}
which originated in the study of integrable models in two dimensions.
In retrospect, they are part of the cluster algebraic
structure which appeared prior to the 
 notion of cluster algebra itself.
Naturally and inevitably,
their cluster algebraic nature has been gradually revealed recently
\cite{Fomin03b,Chapoton05,Fomin07, Keller08, DiFrancesco09a,Hernandez09, Kuniba09,
Inoue10c, Nakajima09,Keller10,Nakanishi09,Inoue10a,
Inoue10b,Nakanishi10a,Nakanishi10b},
and it turned out that
the cluster algebraic formulation and machinery
are very powerful and essential to understanding their properties.

Since examples of periodicities of exchange relations
and seeds are accumulating,
it may be a good time to reverse the viewpoint,
namely, to formulate T-systems,
Y-systems, and dilogarithm identities
in a more general and unified setting,
starting from general cluster algebras with such periodicity properties.
This is the subject of the paper.

Let us summarize the main result of the paper.
For any  sequence of mutations
under which exchange matrices are periodic,
we define the associated T- and Y-systems.
When the sequence is `regular',
they are particularly natural generalizations of the known `classic' T-
and Y-systems.
The definition of the term `regular' is found in Definition
\ref{def:regular}.
Furthermore, for any  
sequence of mutations
under which seeds are periodic,
we formulate the associated dilogarithm identity.
We prove the identities when  exchange matrices are skew symmetric.
We expect that there will be several applications of
the result in various areas related to cluster algebras.

We mention that many examples of
periodicities of exchange matrices
were constructed by Fordy and Marsh \cite{Fordy09}
and the associated T-systems were also introduced.

The relation between cluster algebras and the dilogarithm
was also studied earlier by Fock and Goncharov \cite{Fock09,Fock07}.
In fact, Proposition \ref{prop:local} is motivated by
their formulas.
However, there is a subtle but important difference;
that is, we use {\em $F$-polynomials} of \cite{Fomin07}
in \eqref{eq:W}.
See Section \ref{subsec:local} for more details.

The organization of the paper is as follows.
In Section 2 we introduce the basic notions for
periodicities of exchange matrices and seeds.
In Section 3 we present some of known examples of periodicities
of exchange matrices (or quivers) and seeds,
 most of which are connected to `classic' T- and Y-systems.
In Section 4 we give Restriction/Extension Theorem of
periodicities of seeds.
After these preparations, 
in Section 5,
for any sequence of mutations
under which exchange matrices are periodic,
we define the associated T- and Y-systems.
Here we do not require the periodicity of seeds.
Special attention is paid to the case when the sequence is regular.
In Section 6,
for any  sequence of mutations
under which seeds are periodic,
we formulate the associated dilogarithm identity.
Then, we prove the identities when  exchange matrices are skew symmetric 
in Theorem \ref{thm:DI},
which is the main theorem of the paper.

\section*{Acknowledgments}
I thank Rei Inoue, Osamu Iyama, Bernhard Keller,
Atsuo Kuniba, Roberto Tateo, and Junji Suzuki
for sharing their insights in the preceding joint works.
I am grateful to Fr\'ed\'eric Chapoton, Vladimir Fock,
Bernhard Keller, Robert Marsh, Pierre-Guy
Plamondon, and Andrei Zelevinsky
 for kindly communicating their works
and also for useful comments.
Finally, I thank Andrzej Skowronski for his kind
invitation to this proceedings volume.

\section{Periodicities of exchange matrices and seeds}

\subsection{Cluster algebras with coefficients}
\label{subsec:cluster}

In this subsection
we recall the definition
of the cluster algebras with
coefficients
and some of their basic properties,
following the convention in \cite{Fomin07}
with slight change of notations and terminology.
See \cite{Fomin07} for more details and information.

Fix an arbitrary  semifield $\mathbb{P}$,
i.e., an abelian multiplicative group
endowed with a binary
operation of addition $\oplus$ which is commutative,
associative, and distributive with respect to the
multiplication \cite{Hutchins90}.
Let $\mathbb{Q}\mathbb{P}$
denote the quotient field of the group ring $\mathbb{Z}\mathbb{P}$
of $\mathbb{P}$.
Let $I$ be a finite set, and let $B=(b_{ij})_{i,j\in I}$ be a
 skew
symmetrizable (integer) matrix; namely, there is a diagonal positive integer
matrix $D$ such that
${}^{t}(DB)=-DB$.
Let $x=(x_i)_{i\in I}$
be an $I$-tuple of formal variables,
and let $y=(y_i)_{i\in I}$ be an $I$-tuple of elements in $\mathbb{P}$.
For the triplet $(B,x,y)$, called the {\em initial seed},
the  {\em cluster algebra $\mathcal{A}(B,x,y)$ with
coefficients in $\mathbb{P}$} is defined as follows.

Let $(B',x',y')$ be a triplet consisting of
skew symmetrizable matrix $B'=(b'_{ij})_{i,j\in I}$,
an $I$-tuple $x'=(x'_i)_{i\in I}$ with
 $x'_i\in \mathbb{Q}\mathbb{P}(x)$,
and 
an $I$-tuple $y'=(y'_i)_{i\in I}$ with $y'_i\in \mathbb{P}$.
For each $k\in I$, we define another triplet
$(B'',x'',y'')=\mu_k(B',x',y')$, called the {\em mutation
of $(B',x',y')$ at $k$}, as follows.

{\it  (i) Mutation of matrix.}
\begin{align}
\label{eq:Bmut}
b''_{ij}=
\begin{cases}
-b'_{ij}& \mbox{$i=k$ or $j=k$},\\
b'_{ij}+\frac{1}{2}
(|b'_{ik}|b'_{kj} + b'_{ik}|b'_{kj}|)
&\mbox{otherwise}.
\end{cases}
\end{align}

{\it (ii)  Exchange relation of coefficient tuple.}
\begin{align}
\label{eq:coef}
y''_i =
\begin{cases}
\displaystyle
{y'_k}{}^{-1}&i=k,\\
\displaystyle
y'_i \frac{1}{(1\oplus {y'_k}^{-1})^{b'_{ki}}}&
i\neq k,\ b'_{ki}\geq 0,\\
y'_i (1\oplus y'_k)^{-b'_{ki}}&
i\neq k,\ b'_{ki}\leq 0.\\
\end{cases}
\end{align}

{\it (iii)   Exchange relation of cluster.}
\begin{align}
\label{eq:clust}
x''_i =
\begin{cases}
\displaystyle
\frac{y'_k
\prod_{j: b'_{jk}>0} {x'_j}^{b'_{jk}}
+
\prod_{j: b'_{jk}<0} {x'_j}^{-b'_{jk}}
}{(1\oplus y'_k)x'_k}
&
i= k,\\
{x'_i}&i\neq k.\\
\end{cases}
\end{align}
It is easy to see that  $\mu_k$ is an involution,
namely, $\mu_k(B'',x'',y'')=(B',x',y')$.
Now, starting from the initial seed
$(B,x,y)$, iterate mutations and collect all the
resulting triplets $(B',x',y')$.
We call $(B',x',y')$ a {\em seed\/},
$y'$ and $y'_i$ a {\em coefficient tuple} and
a {\em coefficient\/},
$x'$  and $x'_i$, a {\em cluster\/} and
a {\em cluster variable}, respectively.
The {\em cluster algebra $\mathcal{A}(B,x,y)$ with
coefficients in $\mathbb{P}$} is the
$\mathbb{Z}\mathbb{P}$-subalgebra of the
rational function field $\mathbb{Q}\mathbb{P}(x)$
generated by all the cluster variables.
Similarly, the {\em coefficient group $\mathcal{G}(B,y)$ with
coefficients in $\mathbb{P}$} is the
multiplicative subgroup of the semifield $\mathbb{P}$
generated by all the coefficients $y'_i$ together with $1\oplus y'_i$.

It is standard to identify
a {\em skew symmetric} (integer) matrix $B=(b_{ij})_{i,j\in I}$
with a {\em quiver $Q$
without loops or 2-cycles}.
The set of the vertices of $Q$ is given by $I$,
and we put $b_{ij}$ arrows from $i$ to $j$ 
if $b_{ij}>0$.
The mutation $Q''=\mu_k(Q')$ of a quiver $Q'$ is given by the following
rule:
For each pair of an incoming arrow $i\rightarrow k$
and an outgoing arrow $k\rightarrow j$ in $Q'$,
add a new arrow $i\rightarrow j$.
Then, remove a maximal set of pairwise disjoint 2-cycles.
Finally, reverse all arrows incident with $k$.

Let  $\mathbb{P}_{\mathrm{univ}}(y)$ 
be the {\em universal semifield\/} of
the $I$-tuple of generators $y=(y_i)_{i\in I}$, namely,
the semifield consisting of 
the {\em subtraction-free\/} rational functions of formal
variables $y$ with
usual multiplication and addition in the rational function
 field $\mathbb{Q}(y)$.
We write $\oplus$ in $\mathbb{P}_{\mathrm{univ}}(y)$ as $+$
for simplicity when it is not confusing.

{}{\em From now on, unless otherwise mentioned,
we set the semifield $\mathbb{P}$ for $\mathcal{A}(B,x,y)$
to be $\mathbb{P}_{\mathrm{univ}}(y)$,
where $y$ is the coefficient tuple in the initial seed $(B,x,y)$.}

Let $\mathbb{P}_{\mathrm{trop}}(y)$ 
 be the {\em tropical semifield\/} 
of $y=(y_i)_{i\in I}$, which
is the abelian multiplicative group freely generated by
$y$ endowed with the addition $\oplus$
\begin{align}
\label{eq:trop}
\prod_i y_i^{a_i}\oplus
\prod_i y_i^{b_i}
=
\prod_i y_i^{\min(a_i,b_i)}.
\end{align}
There is a canonical surjective semifield homomorphism
$\pi_{\mathbf{T}}$ (the {\em tropical evaluation})
from $\mathbb{P}_{\mathrm{univ}}(y)$
to $\mathbb{P}_{\mathrm{trop}}(y)$ defined by $\pi_{\mathbf{T}}(y_i)= y_i$
and $\pi_{\mathbf{T}}(\alpha)=1$ ($\alpha \in \mathbb{Q}_+$).
For any coefficient $y'_i$ of $\mathcal{A}(B,x,y)$,
let us write $[y'_i]_{\mathbf{T}}:= \pi_{\mathbf{T}}(y'_i)$ for simplicity.
We call $[y'_i]_{\mathbf{T}}$'s the {\em tropical coefficients\/}
(the {\em principal coefficients\/} in \cite{Fomin07}).
They satisfy the exchange relation \eqref{eq:coef}
by replacing $y'_i$ with $[y'_i]_{\mathbf{T}}$ 
with $\oplus$ being the addition in \eqref{eq:trop}.
We also extend this homomorphism to
the homomorphism of fields
$\pi_{\mathbf{T}}:(\mathbb{Q}\mathbb{P}_{\mathrm{univ}}(y))(x)
\rightarrow 
(\mathbb{Q}\mathbb{P}_{\mathrm{trop}}(y))(x)$.

To each seed $(B',x',y')$ of $\mathcal{A}(B,x,y)$
we attach the {\em $F$-polynomials\/} $F'_i(y)\in
 \mathbb{Q}(y)$ ($i\in I$)
by the specialization of $[x'_i]_{\mathbf{T}}$
at $x_j=1$ ($j\in I$).
It is, in fact, a polynomial in $y$ with integer coefficients
due to the Laurent phenomenon \cite[Proposition 3.6]{Fomin07}.
For definiteness, let us take  $I=\{1,\dots,n\}$.
Then,
$x'$ and $y'$ have the following factorized expressions
\cite[Proposition 3.13, Corollary 6.3]{Fomin07}
by the $F$-polynomials.
\begin{align}
\label{eq:gF}
x'_i &=
\left(
\prod_{j=1}^n x_j^{g'_{ji}}
\right)
\frac{
F'_i(\hat{y}_1, \dots,\hat{y}_n)
}
{
F'_i(y_1, \dots,y_n)
},
\quad
\hat{y}_i=y_i\prod_{j=1}^n x_j^{b_{ji}},
\\
\label{eq:Yfact}
y'_i&=
[y'_i]_{\mathbf{T}}
\prod_{j=1}^n F'_j(y_1,\dots,y_n)^{b'_{ji}}.
\end{align}
The integer vector $\mathbf{g}'_i=(g'_{1i},\dots,g'_{ni})$ ($i=1,\dots,n$)
uniquely determined by \eqref{eq:gF}
for each $x'_i$ is called the {\em $g$-vector}
for $x'_i$.

\begin{Conjecture}[\cite{Fomin07}]
\label{conj:pos}
For any cluster algebra $\mathcal{A}(B,x,y)$
with  skew symmetrizable matrix $B$,
the following properties hold.
\par 
(a) Each  tropical coefficient $[y'_i]_{\mathbf{T}}$ 
is not $1$, and, either positive or negative Laurent
monomial in $y$.
\par
(b) (the `sign coherence')
For each $i$, the $i$th components
of $g$-vectors, $g'_{ij}$ ($j=1,\dots,n+1$) in \eqref{eq:gF},
are simultaneously nonpositive or nonnegative.
\par
(c) Each $F$-polynomial $F'_i(y)$ has a constant term 1.
\end{Conjecture}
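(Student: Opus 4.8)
The asserted properties are the open sign‑coherence and $F$‑polynomial conjectures of Fomin and Zelevinsky, so what I would actually propose here is a proof of the case that matters for the rest of the paper, namely when the exchange matrix $B$ is skew symmetric. In that case the plan is to pass to the additive categorification of $\mathcal{A}(B,x,y)$ by a quiver with potential, in the spirit of Derksen--Weyman--Zelevinsky (equivalently one may work with Amiot's generalized cluster category and Plamondon's description of the decategorified invariants). Concretely: identify $B$ with its quiver $Q$, fix a \emph{generic} potential $W$ on $Q$, and to the mutation sequence producing the seed $(B',x',y')$ attach the corresponding sequence of mutations of decorated representations of $(Q,W)$, started from the negative simple decorated representations at the initial seed. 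This produces, for each $i\in I$, a decorated representation $\mathcal{M}'_i$ with underlying module $M'_i$ over the (completed) Jacobian algebra, carrying the combinatorial data we need.

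Part (c) is then immediate. One sets $F_{M}(y)=\sum_{\mathbf{e}}\chi\big(\mathrm{Gr}_{\mathbf{e}}(M)\big)\prod_{j} y_j^{e_j}$, the generating function of Euler characteristics of quiver Grassmannians, and checks --- by matching the mutation recursion of these generating functions with the exchange relations behind \eqref{eq:gF} and \eqref{eq:Yfact}, and comparing at the initial seed --- that $F'_i=F_{M'_i}$. Since $\mathrm{Gr}_{0}(M'_i)$ is a single point, the constant term of $F'_i$ is $\chi(\mathrm{pt})=1$, which is (c). In the same dictionary the $g$‑vector $\mathbf{g}'_i$ of \eqref{eq:gF} corresponds to the $\mathbf{g}$‑vector of $\mathcal{M}'_i$ (the class read off from a minimal presentation of $M'_i$ together with its decoration), and the tropical coefficient $[y'_i]_{\mathbf{T}}$ corresponds, via \eqref{eq:Yfact}, to its $c$‑vector.

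Parts (b) and (a) then become sign‑coherence statements for these decorated representations, and this is where the genuine work lies. The crux is the claim that, along any mutation sequence, the presentation spaces at the mutated vertex never ``mix signs'': at each step $\mu_k$ the relevant map into (resp.\ out of) the vertex $k$ is injective (resp.\ surjective) in the appropriate sense, so that the transformation rule for the $\mathbf{g}$‑/$h$‑vectors --- and dually for the $c$‑vectors --- stays inside one of the two sign regimes. Derksen--Weyman--Zelevinsky establish exactly such a rank inequality by induction on the length of the mutation sequence, using the explicit mutation rule for decorated representations and the $E$‑invariant. Granting it, sign coherence of the $g$‑vectors is (b), sign coherence of the $c$‑vectors is the sign part of (a), and $[y'_i]_{\mathbf{T}}\neq 1$ follows because, once sign coherence of the $c$‑vectors is in hand, the exponent matrix of the tropical coefficients over each seed lies in $GL_n(\mathbb{Z})$, hence has no vanishing row; alternatively, any one of the three parts is known from \cite{Fomin07} to imply much of the others, so it suffices to secure the sign‑coherence input.

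The main obstacle is precisely that inductive rank/sign‑coherence input for the $g$‑ and $c$‑vectors of decorated representations; with the categorical dictionary in place everything else is bookkeeping. Two caveats should be recorded. First, the argument is confined to skew‑symmetric $B$: when $B$ is merely skew symmetrizable there is no quiver with potential to mutate, and one must either try to descend the statement from a skew‑symmetric unfolding (which requires the covering mutation sequence to project compatibly) or use altogether different technology --- that case is genuinely harder and I would leave the conjecture open there. Second, this plan proves only that the constant term of $F'_i$ is $1$, not the full positivity of its coefficients: positivity of $\chi\big(\mathrm{Gr}_{\mathbf{e}}(M'_i)\big)$ for the reachable modules would need an extra geometric input, but that stronger statement is not what (c) asserts, so it is not on the critical path.
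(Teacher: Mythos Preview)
Your proposal is reasonable, but note that the paper does \emph{not} supply its own proof of this statement: it is recorded as a \emph{Conjecture}, and the very next sentence (Theorem~\ref{thm:pos}) simply attributes the skew symmetric case to \cite{Derksen10,Plamondon10b,Nagao10} together with \cite[Proposition~5.6]{Fomin07}, with no argument given in the text. So there is nothing in the paper to compare your proof against.

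That said, your sketch is precisely the Derksen--Weyman--Zelevinsky route that the paper cites: categorify by a quiver with potential, identify $F$-polynomials with quiver-Grassmannian generating functions (giving (c) from $\mathrm{Gr}_0=\{\mathrm{pt}\}$), and derive sign coherence of $c$- and $g$-vectors from the inductive $E$-invariant/rank argument for mutations of decorated representations. Your caveat that the skew symmetrizable case remains open, and that folding/unfolding does not immediately settle it, is also exactly the situation the paper leaves in place. In short: you have correctly reconstructed the content behind the citation, but the paper itself offers only the citation, not a proof.
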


This conjecture was  proved
in the skew symmetric case
 by \cite{Derksen10, Plamondon10b, Nagao10}
 with the result of \cite[Proposition 5.6]{Fomin07}.
\begin{Theorem}
\label{thm:pos}
Conjecture \ref{conj:pos} is true
for any skew symmetric matrix $B$.
\end{Theorem}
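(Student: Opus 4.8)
The plan is to reduce the three statements (a)--(c) to one master property, the sign-coherence of the tropical coefficients, and then to prove that property in the skew symmetric case by means of the representation theory of quivers with potentials.

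Write each tropical coefficient as a Laurent monomial $[y'_i]_{\mathbf{T}}=\prod_{j}y_j^{c'_{ji}}$ and put $\mathbf{c}'_i=(c'_{1i},\dots,c'_{ni})$. Then (a) is exactly the assertion that every $\mathbf{c}'_i$ is a nonzero vector with all of its entries of one sign. Granting this, the rest is formal: Proposition 5.6 of \cite{Fomin07}, together with the other reductions in that paper, shows that sign-coherence of the $\mathbf{c}'_i$ forces the constant term of each $F'_i$ to equal $1$, which is (c), and, via the tropical duality between $c$-vectors and $g$-vectors, forces the sign-coherence of the $g$-vectors, which is (b). Thus the whole theorem reduces to (a) for skew symmetric $B$.

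To prove (a), the strategy is to pass to the decorated representations of quivers with potentials of Derksen--Weyman--Zelevinsky. One fixes a nondegenerate potential on the initial quiver (existence is part of \cite{Derksen10}), attaches to each seed the decorated representation obtained by iterated QP-mutation, and matches QP-mutation with seed mutation. In this dictionary $F'_i$ becomes $\sum_{\mathbf{e}}\chi(\mathrm{Gr}_{\mathbf{e}}(M'_i))\,y^{\mathbf{e}}$, a generating function over Grassmannians of subrepresentations of a distinguished representation $M'_i$; its constant term is $\chi(\mathrm{pt})=1$, which already reproves (c), and $\mathbf{c}'_i$ is, up to one global sign, the dimension vector of (the negative part of) $M'_i$, hence automatically sign-coherent, the sign being dictated by the vanishing of the homological $E$-invariant of $M'_i$. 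The same conclusion is obtained by Plamondon's generalized-cluster-category method \cite{Plamondon10b} and by Nagao's Donaldson--Thomas-theoretic method \cite{Nagao10}; all three apply to an arbitrary skew symmetric $B$, including non-Jacobi-finite quivers, so the theorem also covers cluster algebras with infinitely many seeds.

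The main obstacle is the homological bookkeeping behind this dictionary: one must show that QP-mutation is well-defined along \emph{every} mutation sequence (the mutated potential remaining reduced and nondegenerate) and that the data governing $\mathrm{Ext}^1$ and the $E$-invariant transform correctly under mutation, so that the sign of $\mathbf{c}'_i$ is propagated inductively and never degenerates to $0$. This is precisely the technical core of \cite{Derksen10} (respectively of \cite{Plamondon10b,Nagao10}); once it is in place, Theorem \ref{thm:pos} follows by the reduction described above.
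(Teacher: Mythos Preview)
Your proposal is correct and matches the paper's approach: the paper does not give its own argument but simply records that the conjecture was proved in the skew symmetric case by \cite{Derksen10,Plamondon10b,Nagao10} combined with \cite[Proposition~5.6]{Fomin07}, which is exactly the reduction-to-sign-coherence plus QP/cluster-category input that you outline. Your sketch just unpacks what those references do.
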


Let $\mathbf{i}=(i_1,\dots,i_r)$ be an $I$-sequence,
namely, $i_1,\dots,i_r\in I$.
We define the {\em composite mutation\/} $\mu_{\mathbf{i}}$
by $\mu_{\mathbf{i}}=\mu_{i_r}
\cdots \mu_{i_2} \mu_{i_1}$, where the product means
the composition.
For $I$-sequences $\mathbf{i}$ and $\mathbf{i}'$,
we write $\mathbf{i}{\sim}_B
\mathbf{i}'$ if $\mu_{\mathbf{i}}(B,x,y)=
\mu_{\mathbf{i}'}(B,x,y)$.

The following fact will be used implicitly and frequently.
\begin{Lemma}
\label{lem:order}
Let $B=(b_{ij})_{i,j\in I}$ be a skew symmetrizable matrix
and let $\mathbf{i}=(i_1,\dots,i_r)$ be an $I$-sequence.
Suppose that $b_{{i_a}{i_b}}=0$ for any $1\leq a,b \leq r$.
Then, the following facts hold.
\par
(a) For any permutation $\sigma$ of
$\{1,\dots, r\}$, we have
\begin{align}
{\mathbf{i}}\sim_B
{(i_{\sigma(1)},\dots,i_{\sigma(r)})}.
\end{align}
\par
(b)
Let $B'=\mu_{\mathbf{i}}(B)$.
Then, $b'_{{i_a}{i_b}}=0$ holds for any $1\leq a,b \leq r$.
\par
(c)
Let  $(B',x',y')=\mu_{\mathbf{i}}(B,x,y)$.
Then, $(B,x,y)=\mu_{\mathbf{i}}(B',x',y')$.
\end{Lemma}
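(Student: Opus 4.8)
The plan is to prove the three statements in sequence, since (b) and (c) both rest on the key observation behind (a): if $b_{i_a i_b} = 0$ for all $a, b$, then the mutations $\mu_{i_1}, \dots, \mu_{i_r}$ commute with one another and, moreover, none of them affects the row/column entries among $\{i_1, \dots, i_r\}$. First I would establish (a). It suffices to show $\mathbf{i} \sim_B (i_{\sigma(1)}, \dots, i_{\sigma(r)})$ for a transposition of adjacent indices $\sigma = (a, a+1)$, since adjacent transpositions generate the symmetric group; and by applying the preceding mutations $\mu_{i_{a-1}} \cdots \mu_{i_1}$ first and renaming, this reduces to the case $r = 2$: showing $\mu_{i_2}\mu_{i_1}(B,x,y) = \mu_{i_1}\mu_{i_2}(B,x,y)$ when $b_{i_1 i_2} = b_{i_2 i_1} = 0$ (note skew symmetrizability forces both to vanish together). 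This is a direct check against the mutation formulas \eqref{eq:Bmut}, \eqref{eq:coef}, \eqref{eq:clust}: in \eqref{eq:Bmut} the quadratic correction term at $k = i_1$ involves $b'_{i i_1} b'_{i_1 j}$, which vanishes when $i$ or $j$ equals $i_2$ because $b'_{i_2 i_1} = b'_{i_1 i_2} = 0$ (the entry $b'_{i_1 i_2}$ is unchanged by any mutation at a vertex $k \notin \{i_1, i_2\}$ and equals $0$), so $\mu_{i_1}$ leaves the $i_2$-row and $i_2$-column of $B$ untouched, and symmetrically; the coefficient and cluster exchange relations at $i_1$ only see the $i_1$-row/column of $B$ and the $i_1$-entry of $x, y$, none of which $\mu_{i_2}$ disturbs, and vice versa. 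Hence the two composite mutations agree.

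Next, for (b): since by (a) the order is irrelevant, compute $B' = \mu_{i_r} \cdots \mu_{i_1}(B)$ and observe inductively, using exactly the vanishing-of-correction-term argument above, that at each step the mutation $\mu_{i_a}$ preserves the property that all entries $b_{i_c i_d}$ ($1 \le c, d \le r$) are zero — the off-diagonal ones by the correction-term computation, the diagonal ones automatically. So $b'_{i_a i_b} = 0$ for all $a, b$.

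For (c): again by (a), and since each $\mu_k$ is an involution, $\mu_{\mathbf{i}} \circ \mu_{\mathbf{i}} = \mu_{i_r} \cdots \mu_{i_1} \mu_{i_r} \cdots \mu_{i_1}$. Using (a) to reorder the second block of mutations into the reverse order $\mu_{i_1} \cdots \mu_{i_r}$ — which is legitimate because (b) guarantees the relevant entries of $B'$ still vanish, so the commutation lemma applies at the intermediate seed as well — we get $\mu_{i_r} \cdots \mu_{i_1} \mu_{i_1} \cdots \mu_{i_r}$, and then successive cancellation of adjacent $\mu_{i_a}\mu_{i_a} = \mathrm{id}$ yields the identity. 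Thus $\mu_{\mathbf{i}}(B', x', y') = (B, x, y)$.

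The main obstacle is entirely in part (a), and specifically in being careful that the hypothesis $b_{i_a i_b} = 0$ in the \emph{initial} matrix $B$ really does propagate so that the relevant entries remain zero at every intermediate seed along the composite mutation — this is what makes the pairwise commutation valid not just for $\mu_{i_1}, \mu_{i_2}$ acting on $(B,x,y)$ but for the adjacent pair occurring anywhere in the sequence. Once the stability (which is essentially (b) proved along the way) is in hand, everything else is bookkeeping with the explicit formulas. There is no deep idea; the care needed is purely in tracking which matrix entries each mutation can and cannot change.
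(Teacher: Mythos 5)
Your proposal is correct and follows essentially the same route as the paper, which simply states that (a) and (b) are verified directly from the mutation formulas \eqref{eq:Bmut}--\eqref{eq:clust} and that (c) follows from (a), (b), and the involutivity of each $\mu_k$; you have merely filled in the details (reduction to adjacent transpositions, the vanishing of the correction term, and the propagation of the vanishing hypothesis to intermediate seeds, which you rightly identify as the one point requiring care).
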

\begin{proof}
The facts (a) and (b) are easily verified from
\eqref{eq:Bmut}--\eqref{eq:clust}.
The fact (c) follows from (a), (b), and the involution property of
each mutation $\mu_i$.
\end{proof}

\subsection{Periodicities of exchange matrices and seeds}

\begin{Definition}
\label{def:period}
Let $\mathcal{A}(B,x,y)$ be a cluster algebra,
$(B',x',y')$ be a seed of $\mathcal{A}(B,x,y)$,
$\mathbf{i}=(i_1,\dots,i_r)$ be
an $I$-sequence,
$(B'',x'',y'')=\mu_{\mathbf{i}}(B',x',y')$,
and
$\nu:I \rightarrow I$ be a bijection.
\par
(a) We call the sequence  $\mathbf{i}$
 a {\em $\nu$-period\/} of $B'$ if
$b''_{\nu(i)\nu(j)}=b'_{ij}$ ($i,j\in I$) holds;
furthermore,
 if $\nu=\mathrm{id}$, we simply call it a {\em period} of $B'$.
\par
(b)
We call the sequence  $\mathbf{i}$ a {\em $\nu$-period\/} of $(B',x',y')$
if
\begin{align}
\label{eq:periodBxy}
b''_{\nu(i)\nu(j)}=b'_{ij},
\quad
x''_{\nu(i)}=x'_i,
\quad
y''_{\nu(i)}=y'_i
\quad (i,j\in I)
\end{align}
 holds;
furthermore,
 if $\nu=\mathrm{id}$, we simply call it a {\em period} of $(B',x',y')$.
\end{Definition}

For any seed $(B',x',y')$ of  $\mathcal{A}(B,x,y)$,
$\mathcal{A}(B',x',y')$ is isomorphic to
$\mathcal{A}(B,x,y)$ as a cluster algebra.
Therefore, by resetting the initial seed if necessary,
we may concentrate on the situation
where $(B',x',y')=(B,x,y)$ in the above
without losing generality.

If $\mathbf{i}$ is a $\nu$-period of $(B,x,y)$,
then, of course, it is a $\nu$-period of $B$.
However, the converse does not hold, in general.

If $\mathbf{i}$ is a $\nu$-period of $B$ and
there is a nontrivial
automorphism $\omega:I\rightarrow I$ of $B$,
i.e., $b_{\omega(i)\omega(j)}=b_{ij}$,
then $\mathbf{i}$ is also an $\nu\omega$-period.
On the other hand, there is no such ambiguity for
a $\nu$-period of $(B,x,y)$, since
cluster variables $x_i$ $(i\in I)$ are algebraically independent.

If   $\mathbf{i}$ and  $\mathbf{i}'$
are a $\nu$-period and a $\nu'$-period of $B$, or $(B,x,y)$,
respectively,
then the concatenation of sequences
\begin{align}
 \mathbf{i}\,|\, \nu(\mathbf{i}')
:=(i_1,\dots,i_r, \nu(i'_1),\dots,\nu(i'_{r'}))
\end{align}
is a $\nu\nu'$-period of $B$, or $(B,x,y)$.
In particular,  $\mathbf{i}\,|\, \nu(\mathbf{i})\,|\, \cdots
\,|\, \nu^{p-1}(\mathbf{i})$ is a $\nu^p$-period of $B$,
or $(B,x,y)$, for any positive integer $p$.
Since $\nu$ acts on a finite set $I$,
it has a finite order, say, $g$.
Define
\begin{align}
\mathbf{j}(\mathbf{i},\nu):=
\mathbf{i}\,|\, \nu(\mathbf{i})\,|\, \cdots
\,|\, \nu^{g-1}(\mathbf{i}).
\end{align}
Then, $\mathbf{j}(\mathbf{i},\nu)$ is a period of $B$, or $(B,x,y)$.

Examples of periodicities of exchange matrices (or quivers)
and seeds
will be given in Section \ref{sec:examples}.

\begin{Proposition}
\label{prop:opposite}
 If
 $\mathbf{i}$ is a period
of $(B,x,y)$ in $\mathcal{A}(B,x,y)$,
then 
 $\mathbf{i}$ is also a period
of $(-B,x,y)$ in $\mathcal{A}(-B,x,y)$.
\end{Proposition}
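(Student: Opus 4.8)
The plan is to combine the sign change $B\mapsto -B$ with the inversion $y_i\mapsto y_i^{-1}$ of all coefficients, and to exploit that mutation is equivariant for this combined operation while leaving the cluster variables untouched; periodicity of $(B,x,y)$ then transports directly to $(-B,x,y)$. First I would prove a purely local lemma: for any triple $(B',x',y')$ (with $B'$ skew symmetrizable, $x'_i\in\mathbb{Q}\mathbb{P}(x)$, $y'_i\in\mathbb{P}$) and any $k\in I$, writing $(B'',x'',y'')=\mu_k(B',x',y')$ and $(y')^{\vee}:=((y'_i)^{-1})_{i\in I}$, one has $\mu_k(-B',x',(y')^{\vee})=(-B'',x'',(y'')^{\vee})$. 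Iterating this along an $I$-sequence $\mathbf{i}$ then gives $\mu_{\mathbf{i}}(-B',x',(y')^{\vee})=(-B'',x'',(y'')^{\vee})$, where $(B'',x'',y'')=\mu_{\mathbf{i}}(B',x',y')$.

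The local lemma is a direct verification of \eqref{eq:Bmut}--\eqref{eq:clust}. For \eqref{eq:Bmut}, $|{-b'_{ik}}|=|b'_{ik}|$ immediately gives $\mu_k(-B')=-\mu_k(B')$. For \eqref{eq:coef}, passing from $B'$ to $-B'$ interchanges the two cases $b'_{ki}\geq 0$ and $b'_{ki}\leq 0$, while passing from $y'_k$ to $(y'_k)^{-1}$ (so that a double inversion returns $y'_k$), together with the semifield identity $1\oplus p^{-1}=p^{-1}(1\oplus p)$, turns the right-hand side for $y''_i$ into the one for $(y''_i)^{-1}$; the case $i=k$ is trivial. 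For \eqref{eq:clust}, passing to $-B'$ swaps the index sets $\{j:b'_{jk}>0\}$ and $\{j:b'_{jk}<0\}$, and after cancelling the factor $(y'_k)^{-1}$ produced by $1\oplus(y'_k)^{-1}=(y'_k)^{-1}(1\oplus y'_k)$ in the denominator, the numerator becomes $\prod_{j:b'_{jk}<0}(x'_j)^{-b'_{jk}}+y'_k\prod_{j:b'_{jk}>0}(x'_j)^{b'_{jk}}$, which is exactly the numerator occurring in $x''_k$; and $x''_i=x'_i$ for $i\neq k$ is visibly preserved.

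Next I would apply the iterated lemma to the initial seed $(B,x,y)$ of $\mathcal{A}(B,x,y)$, where $\mathbb{P}=\mathbb{P}_{\mathrm{univ}}(y)$. The hypothesis $\mu_{\mathbf{i}}(B,x,y)=(B,x,y)$ then yields $\mu_{\mathbf{i}}(-B,x,y^{\vee})=(-B,x,y^{\vee})$ with $y^{\vee}=(y_i^{-1})_{i\in I}$. This already shows that $\mathbf{i}$ is a period of the \emph{triple} $(-B,x,y^{\vee})$; it remains to pass from $y^{\vee}$ to the coefficient tuple $(y_i)_{i\in I}$ carried by the initial seed of $\mathcal{A}(-B,x,y)$. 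For this I would use the substitution $\psi\colon y_i\mapsto y_i^{-1}$, which is an automorphism of $\mathbb{P}_{\mathrm{univ}}(y)$ (being a field automorphism of $\mathbb{Q}(y)$ that sends subtraction-free rational functions to subtraction-free ones), extended to $\mathbb{Q}\mathbb{P}_{\mathrm{univ}}(y)(x)$ by $\psi(x_i)=x_i$. Since \eqref{eq:Bmut}--\eqref{eq:clust} are built only from the operations $\cdot,\oplus$ on coefficients and the field operations on the $x_i$, each $\mu_k$, hence $\mu_{\mathbf{i}}$, commutes with $\psi$; applying $\psi$ to the last identity and using $\psi(y^{\vee})=y$ (since $\psi(y_i^{-1})=\psi(y_i)^{-1}=y_i$) and $\psi(x)=x$ gives $\mu_{\mathbf{i}}(-B,x,y)=(-B,x,y)$, i.e.\ $\mathbf{i}$ is a period of $(-B,x,y)$ in $\mathcal{A}(-B,x,y)$.

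I expect the routine-but-delicate point to be the verification of the local lemma, in particular matching the numerator in \eqref{eq:clust} after the rearrangement of $1\oplus(y'_k)^{-1}$; the only conceptual subtlety is the last step, namely that ``being a period'' refers to a concrete triple, so one must explicitly reconcile the convention that fixes the initial coefficient tuple, which is precisely the role of $\psi$. (The local lemma, or a close variant phrased for $Y$-seeds, may alternatively be quoted from the literature in place of the direct computation.)
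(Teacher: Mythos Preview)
Your proof is correct and follows essentially the same approach as the paper: the paper's proof invokes the duality $B\leftrightarrow -B$, $y\leftrightarrow y^{-1}$ and states that the correspondence $(B',x',y')\leftrightarrow(-B',x',(y')^{-1})$ commutes with mutations, which is precisely your local lemma, while the passage from $\mathcal{A}(-B,x,y^{-1})$ to $\mathcal{A}(-B,x,y)$ that the paper leaves implicit is exactly your relabeling via the semifield automorphism $\psi$. You have simply spelled out in full the verifications that the paper compresses into one sentence.
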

\begin{proof}
This is due to the  duality 
between the exchange $B\leftrightarrow -B$,
$y\leftrightarrow y^{-1}$.
Namely, the correspondence of seeds $(B',x',y')$ in $\mathcal{A}(B,x,y)$
and $(-B',x',y'^{-1})$ in $\mathcal{A}(-B,x,y^{-1})$
commutes with mutations and yields
the isomorphism of cluster algebras.
\end{proof}

When $B$ is skew symmetric,
the transformation $B\leftrightarrow -B$ corresponds
to the transformation $Q\leftrightarrow Q^{\mathrm{op}}$,
where $Q^{\mathrm{op}}$ is the opposite quiver of $Q$.

\subsection{Criterion of periodicity of seeds for skew symmetric case}

In general, checking
the condition \eqref{eq:periodBxy} directly is a {\em very\/} difficult task.
However, at least when $B$ is {\em skew symmetric},
one can reduce the condition drastically,
thanks to the existence of the categorification with
2-Calabi-Yau property by Plamondon \cite{Plamondon10a,Plamondon10b}.

\begin{Theorem}[{\cite{Plamondon10a,Plamondon10b},
\cite[Theorem 5.1]{Inoue10a}}]
\label{thm:tropperiod}
Assume that the matrix $B$ in Definition \ref{def:period}
is skew symmetric.
Then, the condition \eqref{eq:periodBxy} holds if and only if
the following condition holds:
\begin{align}
\label{eq:periody}
[y''_{\nu(i)}]_{\mathbf{T}}=[y'_i]_{\mathbf{T}}
\quad (i\in I).
\end{align}
\end{Theorem}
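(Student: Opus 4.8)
The ``only if'' part is immediate: applying the tropical evaluation $\pi_{\mathbf T}$ to the identity $y''_{\nu(i)}=y'_i$ of \eqref{eq:periodBxy} gives \eqref{eq:periody}. So the content is the converse, and my plan is to recover the full seed data from the tropical coefficients. First I would reduce, harmlessly by the remark following Definition \ref{def:period}, to the case $(B',x',y')=(B,x,y)$; then \eqref{eq:periody} reads $[y''_{\nu(i)}]_{\mathbf T}=[y_i]_{\mathbf T}=y_i$, i.e.\ the $C$-matrix $C''$ of $(B'',x'',y'')$ --- the matrix whose columns record the exponent vectors of the tropical coefficients $[y''_i]_{\mathbf T}$ in the free generators $y$ --- is the permutation matrix of $\nu^{-1}$. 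By the tropical duality between the $C$-, $G$- and $B$-matrices attached to a seed, which for skew symmetric $B$ is a formal consequence of the sign-coherence of Theorem \ref{thm:pos}, the $G$-matrix $G''=(g''_{jk})$ and $B''=(b''_{jk})$ are then determined by $C''$ and $B$ through pure linear algebra; carrying this out yields at once $g''_{j,\nu(i)}=\delta_{ji}$ and $b''_{\nu(i)\nu(j)}=b_{ij}$, which gives the matrix identity of \eqref{eq:periodBxy} together with $\nu$-periodicity of the $g$-vectors.

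For the remaining identities $x''_{\nu(i)}=x_i$ and $y''_{\nu(i)}=y_i$ I would invoke the categorification with the $2$-Calabi--Yau property of \cite{Plamondon10a,Plamondon10b}. The reachable seed $(B'',x'',y'')$ corresponds to a reachable rigid object $T''=\bigoplus_i T''_i$ in the (generalized) cluster category of $Q$ equipped with a non-degenerate potential, and the crucial fact is that such an object is determined up to isomorphism by its index, i.e.\ by the matrix $G''$. Since $G''$ equals the $G$-matrix of the initial seed up to the relabeling $\nu$, and, by the injectivity of the index, the unique reachable rigid indecomposable of index the $i$-th unit vector is the initial summand $T_i$, we obtain $T''_{\nu(i)}\cong T_i$. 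Evaluating the cluster character gives $x''_{\nu(i)}=x_i$; reading off the $F$-polynomial of $T''_{\nu(i)}$ (or simply using \eqref{eq:gF} now that $x''_{\nu(i)}$ is identified) gives $F''_{\nu(i)}(y)=1$ for every $i$. Substituting $F''_j\equiv1$ into the factorization \eqref{eq:Yfact} yields $y''_{\nu(i)}=[y''_{\nu(i)}]_{\mathbf T}=y_i$, which completes \eqref{eq:periodBxy}.

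The reduction and the tropical-duality linear algebra are routine, resting only on Theorem \ref{thm:pos} and the factorization formulas \eqref{eq:gF}--\eqref{eq:Yfact}; the bookkeeping of the bijection $\nu$ needs a little care but nothing deeper. The genuine obstacle is the categorical input: one needs Plamondon's cluster category in sufficient generality --- in particular for quivers whose Jacobian algebra is infinite dimensional, where one uses the variant of \cite{Plamondon10b} with possibly infinite-dimensional morphism spaces --- together with the precise dictionary between a seed and its object ($B''$ as the Gabriel quiver of $\mathrm{End}(T'')$, cluster variables as values of the cluster character, $F$-polynomials as generating functions of Euler characteristics of quiver Grassmannians) and, above all, the injectivity of the index on reachable rigid objects. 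All of this is precisely what \cite{Plamondon10a,Plamondon10b} provide; the reduction organized above is essentially that of \cite[Theorem 5.1]{Inoue10a}, and I would present the proof along these lines.
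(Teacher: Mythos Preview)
The paper does not give its own proof of this theorem: it is stated with attribution to \cite{Plamondon10a,Plamondon10b} and \cite[Theorem 5.1]{Inoue10a}, and used as a black box thereafter. Your outline is a faithful reconstruction of the argument behind those citations: the ``only if'' direction is trivial, and the ``if'' direction proceeds exactly by (i) reducing to the initial seed, (ii) reading off the $C$-matrix as a permutation and deducing the $G$- and $B$-matrices via tropical duality (which the paper later records as Theorem \ref{thm:CG}), (iii) invoking Plamondon's injectivity of the index on reachable rigid objects in the generalized cluster category to identify the cluster variables, and (iv) recovering the full coefficients from \eqref{eq:Yfact} once the $F$-polynomials are seen to be $1$.

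One small remark on your step (ii): the recovery of $B''$ from $C''$ and $B$ is not quite ``pure linear algebra'' from the duality $C''^{T}G''=I$ alone; you also need the compatibility $G''B''=BC''$ (recorded in the proof of Theorem \ref{thm:CG} as \cite[Eq.~(6.14)]{Fomin07}) together with the invertibility of $G''$. With that in hand your conclusion $b''_{\nu(i)\nu(j)}=b_{ij}$ follows. Otherwise your sketch is correct and matches the intended route; the only genuinely deep input, as you rightly flag, is the categorical injectivity-of-index statement from \cite{Plamondon10a,Plamondon10b}.
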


We expect that
Theorem \ref{thm:tropperiod}
holds for any skew symmetrizable matrix $B$.

Let us also mention that
the application software by
Bernhard Keller
\cite{Keller08c} is a practical and versatile
tool to check and explore periodicities of
quivers and seeds.

\section{Examples}
\label{sec:examples}

We present some of known examples of periodicities
of exchange matrices (or quivers) and seeds.

\subsection{Examples of periodicities of exchange matrices}

There are plenty of examples of periodicities of exchange matrices.

We identify a skew symmetric matrix $B$ and the corresponding
quiver $Q$ as in Section \ref{subsec:cluster}.

\begin{Example}
\label{ex:preperiod}
{\em Cluster algebras for bipartite matrices with alternating property
\cite{Fomin07}.}

 Assume that a skew symmetrizable matrix $B$ is {\em bipartite\/};
namely,  the index set $I$ of $B$
 admits the decomposition
$I=I_+\sqcup I_-$ such that, for any pair $(i,j)$ with $b_{ij}\neq 0$,
either $i\in I_+$, $j\in I_-$ or $i\in I_-$, $j\in I_+$ holds.
Assume further that $B$ has the following `alternating' property:
$b_{ij}>0$ only if $i\in I_+$, $j\in I_-$.
(In the quiver picture, $i\in I_+$ is a source
and $i\in I_-$ is a sink so that the quiver is alternating.)
Let $\mathbf{i}_+$ and $\mathbf{i}_-$ be the sequences of
all the distinct elements of $I_+$ and $I_-$,
respectively,
where the order of the sequence is chosen arbitrarily
thanks to Lemma \ref{lem:order}.
Then, $\mu_{\mathbf{i}_+}(B) =-B$
and $\mu_{\mathbf{i}_-}(-B) = B$.
Thus, $\mathbf{i}=\mathbf{i}_+ \, | \,
\mathbf{i}_-$ is a period of $B$.
The associated Y-system and `T-system' (in our terminology)
were studied in detail in \cite{Fomin07}.
(A matrix $B$ here is called a `bipartite matrix' in \cite{Fomin07}.)
\end{Example}

\begin{figure}
\begin{center}
\setlength{\unitlength}{0.71pt}
\begin{picture}(300,288)(0,-15)
%
\put(0,0)
{
\put(0,60){\circle{5}}
\put(0,135){\circle{5}}
\put(0,210){\circle{5}}
\put(30,0){\circle*{5}}
\put(30,15){\circle*{5}}
\put(30,30){\circle*{5}}
\put(30,45){\circle*{5}}
\put(30,60){\circle*{5}}
\put(30,75){\circle*{5}}
\put(30,90){\circle*{5}}
\put(30,105){\circle*{5}}
\put(30,120){\circle*{5}}
\put(30,135){\circle*{5}}
\put(30,150){\circle*{5}}
\put(30,165){\circle*{5}}
\put(30,180){\circle*{5}}
\put(30,195){\circle*{5}}
\put(30,210){\circle*{5}}
\put(30,225){\circle*{5}}
\put(30,240){\circle*{5}}
\put(30,255){\circle*{5}}
\put(30,270){\circle*{5}}
%
\put(30,3){\vector(0,1){9}}
\put(30,27){\vector(0,-1){9}}
\put(30,33){\vector(0,1){9}}
\put(30,57){\vector(0,-1){9}}
\put(30,63){\vector(0,1){9}}
\put(30,87){\vector(0,-1){9}}
\put(30,93){\vector(0,1){9}}
\put(30,117){\vector(0,-1){9}}
\put(30,123){\vector(0,1){9}}
\put(30,147){\vector(0,-1){9}}
\put(30,153){\vector(0,1){9}}
\put(30,177){\vector(0,-1){9}}
\put(30,183){\vector(0,1){9}}
\put(30,207){\vector(0,-1){9}}
\put(30,213){\vector(0,1){9}}
\put(30,237){\vector(0,-1){9}}
\put(30,243){\vector(0,1){9}}
\put(30,267){\vector(0,-1){9}}
\put(0,132){\vector(0,-1){69}}
\put(0,138){\vector(0,1){69}}
\put(3,51){\vector(1,-2){24}}
\put(27,19){\vector(-2,3){23}}
\put(3,58){\vector(1,-1){24}}
\put(27,47){\vector(-2,1){23}}
\put(3,60){\vector(1,0){24}}
\put(27,73){\vector(-2,-1){23}}
\put(3,62){\vector(1,1){24}}
\put(27,101){\vector(-2,-3){23}}
\put(3,69){\vector(1,2){24}}
\put(27,135){\vector(-1,0){24}}
\put(3,201){\vector(1,-2){24}}
\put(27,169){\vector(-2,3){23}}
\put(3,208){\vector(1,-1){24}}
\put(27,197){\vector(-2,1){23}}
\put(3,210){\vector(1,0){24}}
\put(27,223){\vector(-2,-1){23}}
\put(3,212){\vector(1,1){24}}
\put(27,251){\vector(-2,-3){23}}
\put(3,219){\vector(1,2){24}}
\put(3,-2)
{
\put(-17,60){\small $-$}
\put(-17,135){\small $+$}
\put(-17,210){\small $-$}
}
\put(4,-2)
{
\put(30,0){\small $+$}
\put(30,15){\small $-$}
\put(30,30){\small $+$}
\put(30,45){\small $-$}
\put(30,60){\small $+$}
\put(30,75){\small $-$}
\put(30,90){\small $+$}
\put(30,105){\small $-$}
\put(30,120){\small $+$}
\put(30,135){\small $-$}
\put(30,150){\small $+$}
\put(30,165){\small $-$}
\put(30,180){\small $+$}
\put(30,195){\small $-$}
\put(30,210){\small $+$}
\put(30,225){\small $-$}
\put(30,240){\small $+$}
\put(30,255){\small $-$}
\put(30,270){\small $+$}
}
}
\put(70,0)
{
\put(0,60){\circle{5}}
\put(0,135){\circle{5}}
\put(0,210){\circle{5}}
\put(30,0){\circle*{5}}
\put(30,15){\circle*{5}}
\put(30,30){\circle*{5}}
\put(30,45){\circle*{5}}
\put(30,60){\circle*{5}}
\put(30,75){\circle*{5}}
\put(30,90){\circle*{5}}
\put(30,105){\circle*{5}}
\put(30,120){\circle*{5}}
\put(30,135){\circle*{5}}
\put(30,150){\circle*{5}}
\put(30,165){\circle*{5}}
\put(30,180){\circle*{5}}
\put(30,195){\circle*{5}}
\put(30,210){\circle*{5}}
\put(30,225){\circle*{5}}
\put(30,240){\circle*{5}}
\put(30,255){\circle*{5}}
\put(30,270){\circle*{5}}
%
\put(30,3){\vector(0,1){9}}
\put(30,27){\vector(0,-1){9}}
\put(30,33){\vector(0,1){9}}
\put(30,57){\vector(0,-1){9}}
\put(30,63){\vector(0,1){9}}
\put(30,87){\vector(0,-1){9}}
\put(30,93){\vector(0,1){9}}
\put(30,117){\vector(0,-1){9}}
\put(30,123){\vector(0,1){9}}
\put(30,147){\vector(0,-1){9}}
\put(30,153){\vector(0,1){9}}
\put(30,177){\vector(0,-1){9}}
\put(30,183){\vector(0,1){9}}
\put(30,207){\vector(0,-1){9}}
\put(30,213){\vector(0,1){9}}
\put(30,237){\vector(0,-1){9}}
\put(30,243){\vector(0,1){9}}
\put(30,267){\vector(0,-1){9}}
\put(0,63){\vector(0,1){69}}
\put(0,207){\vector(0,-1){69}}
\put(27,19){\vector(-2,3){23}}
\put(3,58){\vector(1,-1){24}}
\put(27,47){\vector(-2,1){23}}
\put(3,60){\vector(1,0){24}}
\put(27,73){\vector(-2,-1){23}}
\put(3,62){\vector(1,1){24}}
\put(27,101){\vector(-2,-3){23}}
\put(3,133){\vector(2,-1){24}}
\put(27,135){\vector(-1,0){23}}
\put(3,137){\vector(2,1){24}}
\put(27,169){\vector(-2,3){23}}
\put(3,208){\vector(1,-1){24}}
\put(27,197){\vector(-2,1){23}}
\put(3,210){\vector(1,0){24}}
\put(27,223){\vector(-2,-1){23}}
\put(3,212){\vector(1,1){24}}
\put(27,251){\vector(-2,-3){23}}
\put(3,-2)
{
\put(-17,60){\small $+$}
\put(-17,135){\small $-$}
\put(-17,210){\small $+$}
}
\put(4,-2)
{
\put(30,0){\small $+$}
\put(30,15){\small $-$}
\put(30,30){\small $+$}
\put(30,45){\small $-$}
\put(30,60){\small $+$}
\put(30,75){\small $-$}
\put(30,90){\small $+$}
\put(30,105){\small $-$}
\put(30,120){\small $+$}
\put(30,135){\small $-$}
\put(30,150){\small $+$}
\put(30,165){\small $-$}
\put(30,180){\small $+$}
\put(30,195){\small $-$}
\put(30,210){\small $+$}
\put(30,225){\small $-$}
\put(30,240){\small $+$}
\put(30,255){\small $-$}
\put(30,270){\small $+$}
}
}
\put(140,0)
{
\put(0,60){\circle{5}}
\put(0,135){\circle{5}}
\put(0,210){\circle{5}}
\put(30,0){\circle*{5}}
\put(30,15){\circle*{5}}
\put(30,30){\circle*{5}}
\put(30,45){\circle*{5}}
\put(30,60){\circle*{5}}
\put(30,75){\circle*{5}}
\put(30,90){\circle*{5}}
\put(30,105){\circle*{5}}
\put(30,120){\circle*{5}}
\put(30,135){\circle*{5}}
\put(30,150){\circle*{5}}
\put(30,165){\circle*{5}}
\put(30,180){\circle*{5}}
\put(30,195){\circle*{5}}
\put(30,210){\circle*{5}}
\put(30,225){\circle*{5}}
\put(30,240){\circle*{5}}
\put(30,255){\circle*{5}}
\put(30,270){\circle*{5}}
%
\put(30,3){\vector(0,1){9}}
\put(30,27){\vector(0,-1){9}}
\put(30,33){\vector(0,1){9}}
\put(30,57){\vector(0,-1){9}}
\put(30,63){\vector(0,1){9}}
\put(30,87){\vector(0,-1){9}}
\put(30,93){\vector(0,1){9}}
\put(30,117){\vector(0,-1){9}}
\put(30,123){\vector(0,1){9}}
\put(30,147){\vector(0,-1){9}}
\put(30,153){\vector(0,1){9}}
\put(30,177){\vector(0,-1){9}}
\put(30,183){\vector(0,1){9}}
\put(30,207){\vector(0,-1){9}}
\put(30,213){\vector(0,1){9}}
\put(30,237){\vector(0,-1){9}}
\put(30,243){\vector(0,1){9}}
\put(30,267){\vector(0,-1){9}}
\put(0,132){\vector(0,-1){69}}
\put(0,138){\vector(0,1){69}}
\put(3,58){\vector(1,-1){24}}
\put(27,47){\vector(-2,1){23}}
\put(3,60){\vector(1,0){24}}
\put(27,73){\vector(-2,-1){23}}
\put(3,62){\vector(1,1){24}}
\put(27,107){\vector(-1,1){23}}
\put(3,133){\vector(2,-1){24}}
\put(27,135){\vector(-1,0){23}}
\put(3,137){\vector(2,1){24}}
\put(27,163){\vector(-1,-1){23}}
\put(3,208){\vector(1,-1){24}}
\put(27,197){\vector(-2,1){23}}
\put(3,210){\vector(1,0){24}}
\put(27,223){\vector(-2,-1){23}}
\put(3,212){\vector(1,1){24}}
\put(3,-2)
{
\put(-17,60){\small $-$}
\put(-17,135){\small $+$}
\put(-17,210){\small $-$}
}
\put(4,-2)
{
\put(30,0){\small $+$}
\put(30,15){\small $-$}
\put(30,30){\small $+$}
\put(30,45){\small $-$}
\put(30,60){\small $+$}
\put(30,75){\small $-$}
\put(30,90){\small $+$}
\put(30,105){\small $-$}
\put(30,120){\small $+$}
\put(30,135){\small $-$}
\put(30,150){\small $+$}
\put(30,165){\small $-$}
\put(30,180){\small $+$}
\put(30,195){\small $-$}
\put(30,210){\small $+$}
\put(30,225){\small $-$}
\put(30,240){\small $+$}
\put(30,255){\small $-$}
\put(30,270){\small $+$}
}
}
%
\put(210,0)
{
\put(0,60){\circle{5}}
\put(0,135){\circle{5}}
\put(0,210){\circle{5}}
\put(30,0){\circle*{5}}
\put(30,15){\circle*{5}}
\put(30,30){\circle*{5}}
\put(30,45){\circle*{5}}
\put(30,60){\circle*{5}}
\put(30,75){\circle*{5}}
\put(30,90){\circle*{5}}
\put(30,105){\circle*{5}}
\put(30,120){\circle*{5}}
\put(30,135){\circle*{5}}
\put(30,150){\circle*{5}}
\put(30,165){\circle*{5}}
\put(30,180){\circle*{5}}
\put(30,195){\circle*{5}}
\put(30,210){\circle*{5}}
\put(30,225){\circle*{5}}
\put(30,240){\circle*{5}}
\put(30,255){\circle*{5}}
\put(30,270){\circle*{5}}
%
\put(30,3){\vector(0,1){9}}
\put(30,27){\vector(0,-1){9}}
\put(30,33){\vector(0,1){9}}
\put(30,57){\vector(0,-1){9}}
\put(30,63){\vector(0,1){9}}
\put(30,87){\vector(0,-1){9}}
\put(30,93){\vector(0,1){9}}
\put(30,117){\vector(0,-1){9}}
\put(30,123){\vector(0,1){9}}
\put(30,147){\vector(0,-1){9}}
\put(30,153){\vector(0,1){9}}
\put(30,177){\vector(0,-1){9}}
\put(30,183){\vector(0,1){9}}
\put(30,207){\vector(0,-1){9}}
\put(30,213){\vector(0,1){9}}
\put(30,237){\vector(0,-1){9}}
\put(30,243){\vector(0,1){9}}
\put(30,267){\vector(0,-1){9}}
\put(0,63){\vector(0,1){69}}
\put(0,207){\vector(0,-1){69}}
\put(27,47){\vector(-2,1){23}}
\put(3,60){\vector(1,0){24}}
\put(27,73){\vector(-2,-1){23}}
\put(3,128){\vector(2,-3){24}}
\put(27,107){\vector(-1,1){23}}
\put(3,133){\vector(2,-1){24}}
\put(27,135){\vector(-1,0){23}}
\put(3,137){\vector(2,1){24}}
\put(3,142){\vector(2,3){23}}
\put(27,163){\vector(-1,-1){24}}
\put(27,197){\vector(-2,1){23}}
\put(3,210){\vector(1,0){24}}
\put(27,223){\vector(-2,-1){23}}
\put(3,-2)
{
\put(-17,60){\small $+$}
\put(-17,135){\small $-$}
\put(-17,210){\small $+$}
}
\put(4,-2)
{
\put(30,0){\small $+$}
\put(30,15){\small $-$}
\put(30,30){\small $+$}
\put(30,45){\small $-$}
\put(30,60){\small $+$}
\put(30,75){\small $-$}
\put(30,90){\small $+$}
\put(30,105){\small $-$}
\put(30,120){\small $+$}
\put(30,135){\small $-$}
\put(30,150){\small $+$}
\put(30,165){\small $-$}
\put(30,180){\small $+$}
\put(30,195){\small $-$}
\put(30,210){\small $+$}
\put(30,225){\small $-$}
\put(30,240){\small $+$}
\put(30,255){\small $-$}
\put(30,270){\small $+$}
}
}
%
\put(280,0)
{
\put(0,60){\circle{5}}
\put(0,135){\circle{5}}
\put(0,210){\circle{5}}
\put(30,0){\circle*{5}}
\put(30,15){\circle*{5}}
\put(30,30){\circle*{5}}
\put(30,45){\circle*{5}}
\put(30,60){\circle*{5}}
\put(30,75){\circle*{5}}
\put(30,90){\circle*{5}}
\put(30,105){\circle*{5}}
\put(30,120){\circle*{5}}
\put(30,135){\circle*{5}}
\put(30,150){\circle*{5}}
\put(30,165){\circle*{5}}
\put(30,180){\circle*{5}}
\put(30,195){\circle*{5}}
\put(30,210){\circle*{5}}
\put(30,225){\circle*{5}}
\put(30,240){\circle*{5}}
\put(30,255){\circle*{5}}
\put(30,270){\circle*{5}}
%
\put(30,3){\vector(0,1){9}}
\put(30,27){\vector(0,-1){9}}
\put(30,33){\vector(0,1){9}}
\put(30,57){\vector(0,-1){9}}
\put(30,63){\vector(0,1){9}}
\put(30,87){\vector(0,-1){9}}
\put(30,93){\vector(0,1){9}}
\put(30,117){\vector(0,-1){9}}
\put(30,123){\vector(0,1){9}}
\put(30,147){\vector(0,-1){9}}
\put(30,153){\vector(0,1){9}}
\put(30,177){\vector(0,-1){9}}
\put(30,183){\vector(0,1){9}}
\put(30,207){\vector(0,-1){9}}
\put(30,213){\vector(0,1){9}}
\put(30,237){\vector(0,-1){9}}
\put(30,243){\vector(0,1){9}}
\put(30,267){\vector(0,-1){9}}
\put(0,132){\vector(0,-1){69}}
\put(0,138){\vector(0,1){69}}
\put(3,60){\vector(1,0){24}}
\put(27,77){\vector(-1,2){23}}
\put(3,128){\vector(2,-3){24}}
\put(27,107){\vector(-1,1){23}}
\put(3,133){\vector(2,-1){24}}
\put(27,135){\vector(-1,0){23}}
\put(3,137){\vector(2,1){24}}
\put(27,163){\vector(-1,-1){23}}
\put(3,142){\vector(2,3){24}}
\put(27,193){\vector(-1,-2){23}}
\put(3,210){\vector(1,0){24}}
\put(3,-2)
{
\put(-17,60){\small $-$}
\put(-17,135){\small $+$}
\put(-17,210){\small $-$}
}
\put(4,-2)
{
\put(30,0){\small $+$}
\put(30,15){\small $-$}
\put(30,30){\small $+$}
\put(30,45){\small $-$}
\put(30,60){\small $+$}
\put(30,75){\small $-$}
\put(30,90){\small $+$}
\put(30,105){\small $-$}
\put(30,120){\small $+$}
\put(30,135){\small $-$}
\put(30,150){\small $+$}
\put(30,165){\small $-$}
\put(30,180){\small $+$}
\put(30,195){\small $-$}
\put(30,210){\small $+$}
\put(30,225){\small $-$}
\put(30,240){\small $+$}
\put(30,255){\small $-$}
\put(30,270){\small $+$}
}
}
\put(10,-20){$Q_1$}
\put(80,-20){$Q_2$}
\put(150,-20){$Q_3$}
\put(220,-20){$Q_4$}
\put(290,-20){$Q_5$}
\end{picture}
\end{center}
%
%
\caption{An example of periodicity of quivers,
where we identify the right columns in all the  quivers
$Q_1$, \dots, $Q_5$.}
\label{fig:quiver}
\end{figure}
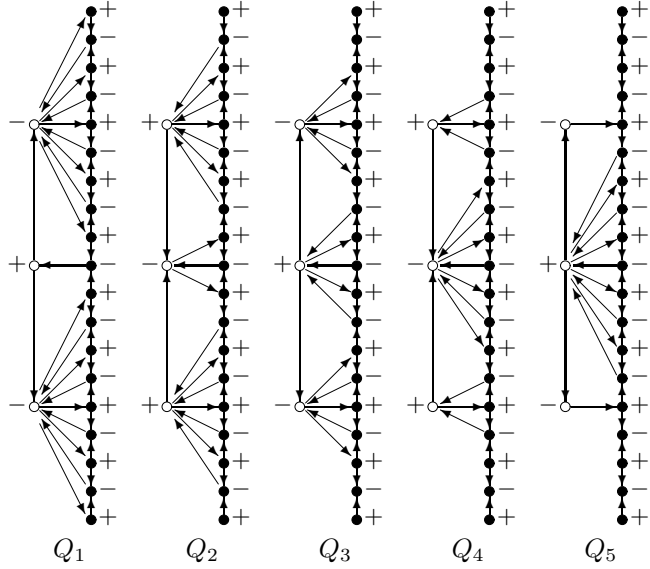

\begin{Example}
\label{ex:G5}
{\em Cluster algebras for
 T- and Y-systems for quantum affinizations
of tamely laced quantum Kac-Moody algebras \cite{Hernandez07a,
Kuniba09,Nakanishi10a}.}

The cluster algebras for
 T- and Y-systems for quantum affinizations
of tamely laced quantum Kac-Moody algebras
 provide a rich family
of more  complicated periodicities of exchange matrices.
As a typical example, let $Q$ be the quiver in Figure \ref{fig:quiver},
where the right columns in the five quivers
$Q_1$, \dots, $Q_5$ are identified.
We remark that $Q$ is not bipartite in the sense of
Example \ref{ex:preperiod}.
Let
$\mathbf{i}^{\bullet}_+$ (resp.  $\mathbf{i}^{\bullet}_-$)
be the sequence of all the distinct elements of
the vertices in $Q$ with property $(\bullet,+)$ (resp. $(\bullet,-)$),
and let
$\mathbf{i}^{\circ}_{+,k}$ (resp.  $\mathbf{i}^{\circ}_{-,k}$)
be a sequence of all the distinct elements of
the vertices in $Q_k$
 with property $(\circ,+)$ (resp. $(\circ,-)$),
where the order of the sequence is chosen arbitrarily
thanks to Lemma \ref{lem:order}.
Let
\begin{align}
\begin{split}
\mathbf{i}&=
\mathbf{i}^{\bullet}_+\,|\,
 \mathbf{i}^{\circ}_{+,1} \,|\,
\mathbf{i}^{\bullet}_-\,|\,
 \mathbf{i}^{\circ}_{+,4} \,|\,
\mathbf{i}^{\bullet}_+\,|\,
 \mathbf{i}^{\circ}_{+,3} \,|\,
\mathbf{i}^{\bullet}_-\,|\,
 \mathbf{i}^{\circ}_{+,2} \,|\,
\mathbf{i}^{\bullet}_+\,|\,
 \mathbf{i}^{\circ}_{+,5} \\
& \quad \,|\,
\mathbf{i}^{\bullet}_-\,|\,
 \mathbf{i}^{\circ}_{-,1} \,|\,
\mathbf{i}^{\bullet}_+\,|\,
 \mathbf{i}^{\circ}_{-,4} \,|\,
\mathbf{i}^{\bullet}_-\,|\,
 \mathbf{i}^{\circ}_{-,3} \,|\,
\mathbf{i}^{\bullet}_+\,|\,
 \mathbf{i}^{\circ}_{-,2} \,|\,
\mathbf{i}^{\bullet}_-\,|\,
 \mathbf{i}^{\circ}_{-,5}.
\end{split}
\end{align}
Let $\sigma$ be 
the permutation of $\{1,\dots,5\}$,
\begin{align}
\sigma= \left(
\begin{matrix}
1&2& 3& 4 & 5\\
3&1& 5& 2 & 4\\
\end{matrix}
\right).
\end{align}
Let $\nu:I \rightarrow I$ be the bijection of order $5$ such that 
each vertex in $Q_i$ maps to the vertex in $Q_{\sigma(i)}$
in the same position. In particular, every vertex with $\bullet$
is a fixed point of $\nu$, and every  vertex with $\circ$
has the $\nu$-orbit of length $5$.
Then
$\mathbf{i}^{\bullet}_+\,|\,
 \mathbf{i}^{\circ}_{+,1} \,|\,
\mathbf{i}^{\bullet}_-\,|\,
 \mathbf{i}^{\circ}_{+,4} 
$ is a $\nu$-period of $Q$,
and $\mathbf{i}$ is a period of $Q$.
Note that $\mathbf{i}\sim_Q
\mathbf{j}(\mathbf{i}^{\bullet}_+\,|\,
 \mathbf{i}^{\circ}_{+,1} \,|\,
\mathbf{i}^{\bullet}_-\,|\,
 \mathbf{i}^{\circ}_{+,4},\nu)$.

The quiver $Q$ corresponds to the level $\ell=4$ T- and Y-systems for
the quantum affinization of the quantum Kac-Moody algebra whose
Cartan matrix is represented by the following Dynkin diagram.
\begin{align*}
\begin{picture}(20,25)(0,-15)
%
%
\put(0,0){
\put(0,0){\circle{6}}
\put(20,0){\circle{6}}
\drawline(2,-4)(18,-4)
\drawline(2,-2)(18,-2)
\drawline(2,2)(18,2)
\drawline(2,4)(18,4)
\drawline(3,0)(17,0)
\drawline(7,6)(13,0)
\drawline(7,-6)(13,0)
\put(-2,-15){\small $1$}
\put(18,-15){\small $2$}
}
\end{picture}
\end{align*}
See \cite{Nakanishi10a} for more details.
\end{Example}

\begin{Example}
\label{ex:fordy}
{\em Cluster algebras
with $\rho^m$-period of $Q$ for cyclic permutation $\rho$
\cite{Fordy09}.}

For a cyclic permutation $\rho$ of $I$,
many examples of quivers with $\rho^m$-period were constructed
and partially classified in \cite{Fordy09}.
For instance, let $Q$ be the following quiver:

\begin{align*}
\label{eq:fordy}
\raisebox{-45pt}{
\begin{picture}(70,80)(0,-15)
\put(20,0){\circle{6}}
\put(50,0){\circle{6}}
\put(0,30){\circle{6}}
\put(70,30){\circle{6}}
\put(20,60){\circle{6}}
\put(50,60){\circle{6}}
\put(3,27){\vector(2,-3){15}}
\put(17,57){\vector(-2,-3){15}}
\put(53,3){\vector(2,3){15}}
\put(67,33){\vector(-2,3){15}}
\put(5,31.5){\vector(1,0){60}}
\put(5,28.5){\vector(1,0){60}}
\put(46,59){\vector(-3,-2){40}}
\put(46,1){\vector(-3,2){40}}
\put(64,27){\vector(-3,-2){40}}
\put(64,33){\vector(-3,2){40}}
\put(22,4){\vector(1,2){26}}
\put(22,56){\vector(1,-2){26}}
\put(20,4){\vector(0,1){52}}
\put(50,56){\vector(0,-1){52}}
\put(48,67){\small $1$}
\put(75,27){\small $2$}
\put(48,-13){\small $3$}
\put(18,-13){\small $4$}
\put(-10,27){\small $5$}
\put(18,67){\small $6$}
\end{picture}
}
\end{align*}
Let
\begin{align}
\mathbf{i}=(1,2,3,4,5,6).
\end{align}
Let $\rho$ be 
the cyclic permutation of $\{1,\dots,6\}$,
\begin{align}
\rho= \left(
\begin{matrix}
1&2& 3& 4 & 5& 6\\
2&3& 4& 5 & 6& 1\\
\end{matrix}
\right).
\end{align}
Then, $(1,2)$ is a $\rho^2$-period of $Q$,
and $\mathbf{i}$ is a period of $Q$.
Note that $\mathbf{i}=\mathbf{j}((1,2),\rho^2)$.

The quiver $Q$ is a special case of 
a family of `period 2 solutions' of \cite[Section 7.4]{Fordy09}
with $m_1=m_3=-m_2=1$ and $m_{\bar{1}}=0$ therein.
It is also the quiver for the quiver gauge theory
on the del Pezzo 3 surface \cite{Feng01}.
See \cite{Fordy09} for more details.

\end{Example}

\subsection{Examples of periodicities of seeds}

All the examples of periodicities of seeds below
can be proved by verifying the condition \eqref{eq:periody}
in Theorem \ref{thm:tropperiod}, case by case,
with the help of relevant Coxeter elements.

\begin{Example}
\label{ex:finite}
{\em Cluster algebras of finite type\/} \cite{Fomin03a,Fomin03b}.
\par
For a skew symmetrizable matrix $B$,
define a matrix $C=C(B)$ by
\begin{align}
C_{ij}=
\begin{cases}
2 & i=j\\
-|b_{ij}| & i \neq j.\\
\end{cases}
\end{align}
Then, it is known that
the cluster algebra $\mathcal{A}(B,x,y)$ is of finite type
if and only if
$B$ is mutation equivalent to a skew-symmetric matrix $B'$
such that  $C(B')$ is a direct sum of  Cartan matrices of finite type.
Suppose that $C(B)$ is a Cartan matrix of finite type.
Since there are only a finite number of seeds of 
$\mathcal{A}(B,x,y)$,
for any $I$-sequence $\mathbf{i}$,
there is some $p$ such that the $p$-fold concatenation
$\mathbf{i}^p$
of $\mathbf{i}$ is a period of $\mathcal{A}(B,x,y)$.
Among them there is some distinguished period
of $(B,x,y)$,  which is closely
related to the Coxeter element of the Weyl group for $C$.
For type $A_n$, for example, they are given as follows.

(a) Type $A_n$ ($n$: odd).
For odd $n$, let $Q$ be the following alternating quiver with index set
$I=\{1,\dots,n\}$:
\begin{center}
\begin{picture}(80,25)(0,-15)
%
%
\put(0,0){\circle{6}}
\put(20,0){\circle{6}}
\put(40,0){\circle{6}}
\put(60,0){\circle{6}}
\put(80,0){\circle{6}}
\put(3,0){\vector(1,0){14}}
\put(37,0){\vector(-1,0){14}}
\put(43,0){\vector(1,0){14}}
\put(77,0){\vector(-1,0){14}}
\put(-3,-15){$1$}
\put(17,-15){$2$}
\put(37,-15){$3$}
\put(77,-15){$n$}
\end{picture}
\end{center}
Let $\mathbf{i}=\mathbf{i}_+\,|\, \mathbf{i}_-$,
where
$\mathbf{i}_+=(1,3,\dots,n)$,  $\mathbf{i}_-=(2,4,\dots,n-1)$.
Let $\omega:I \rightarrow I$ be the 
left-right reflection,
which is a quiver automorphism of $Q$.
Then $\mathbf{i}$ is a period of $Q$.
Furthermore, 
$\mathbf{i}^{(n+3)/2}$ is an $\omega$-period of $(Q,x,y)$,
and 
$\mathbf{i}^{n+3}$ is a period of $(Q,x,y)$.
We note that $n+3=h(A_{n})+2$,
where $h(X)$ is the
 {\em  Coxeter number\/} of type $X$.
For simply laced $X$, $h(X)$ coincides with the
{\em dual Coxeter number\/} $h^{\vee}(X)$ of type $X$.

(b) Type $A_n$ ($n$: even).
For even $n$, let $Q$ be the following quiver with index set
$I=\{1,\dots,n\}$:
\begin{center}
\begin{picture}(100,25)(0,-15)
%
%
\put(0,0){\circle{6}}
\put(20,0){\circle{6}}
\put(40,0){\circle{6}}
\put(60,0){\circle{6}}
\put(80,0){\circle{6}}
\put(100,0){\circle{6}}
\put(3,0){\vector(1,0){14}}
\put(37,0){\vector(-1,0){14}}
\put(43,0){\vector(1,0){14}}
\put(77,0){\vector(-1,0){14}}
\put(83,0){\vector(1,0){14}}
\put(-3,-15){$1$}
\put(17,-15){$2$}
\put(37,-15){$3$}
\put(97,-15){$n$}
\end{picture}
\end{center}
Let $\mathbf{i}=\mathbf{i}_+\,|\, \mathbf{i}_-$,
where
$\mathbf{i}_+=(1,3,\dots,n-1)$,  $\mathbf{i}_-=(2,4,\dots,n)$.
Let $\nu:I \rightarrow I$ be the left-right
reflection,
which is {\em not\/} a quiver automorphism of $Q$.
Then $\mathbf{i}_+$ is an $\nu$-period of $Q$
and $\mathbf{i}$ is a period of $Q$.
Furthermore, 
$\mathbf{i}^{n/2+1}\,|\, \mathbf{i}_+$ is a $\nu$-period of $(Q,x,y)$,
and 
$\mathbf{i}^{n+3}$ is a period of $(Q,x,y)$.
Note that $\mathbf{i}\sim_Q \mathbf{j}(\mathbf{i}_+,\nu)$
and $\mathbf{i}^{n+3}\sim_Q\mathbf{j}(\mathbf{i}^{n/2+1}\mathbf{i}_+,\nu)$.
\end{Example}

\begin{Example}
\label{ex:affine}
{\em Cluster algebras for T- and Y-systems of
 quantum affine algebras\/}
 \cite{Keller08,Keller10,DiFrancesco09a,Inoue10a,Inoue10b,Inoue10c}.

With each  pair $(X,\ell)$ of a Dynkin diagram $X$ of finite type
and an integer $\ell \geq 2$, 
one can associate a quiver $Q=Q(X,\ell)$.
They are related to the T- and Y-systems of
a quantum affine algebra of type $X$,
and  provide a family of periodicities of seeds.
Let us give typical examples
for simply laced and nonsimply laced ones.

(a) {\em Simply laced case:} $(X,\ell)=(A_4,4)$.
Let $Q$ be the following quiver with index set $I$:
\begin{align}
\label{eq:qA}
\raisebox{-40pt}{
\begin{picture}(90,80)(0,-5)
\put(0,0){\circle{6}}
\put(30,0){\circle{6}}
\put(60,0){\circle{6}}
\put(90,0){\circle{6}}
\put(0,30){\circle{6}}
\put(30,30){\circle{6}}
\put(60,30){\circle{6}}
\put(90,30){\circle{6}}
\put(0,60){\circle{6}}
\put(30,60){\circle{6}}
\put(60,60){\circle{6}}
\put(90,60){\circle{6}}
\put(0,3){\vector(0,1){24}}
\put(0,57){\vector(0,-1){24}}
\put(30,27){\vector(0,-1){24}}
\put(30,33){\vector(0,1){24}}
\put(60,3){\vector(0,1){24}}
\put(60,57){\vector(0,-1){24}}
\put(90,27){\vector(0,-1){24}}
\put(90,33){\vector(0,1){24}}
\put(27,0){\vector(-1,0){24}}
\put(33,0){\vector(1,0){24}}
\put(87,0){\vector(-1,0){24}}
\put(3,30){\vector(1,0){24}}
\put(57,30){\vector(-1,0){24}}
\put(63,30){\vector(1,0){24}}
\put(27,60){\vector(-1,0){24}}
\put(33,60){\vector(1,0){24}}
\put(87,60){\vector(-1,0){24}}
\put(-12,3){$+$}
\put(18,3){$-$}
\put(48,3){$+$}
\put(78,3){$-$}
\put(-12,33){$-$}
\put(18,33){$+$}
\put(48,33){$-$}
\put(78,33){$+$}
\put(-12,63){$+$}
\put(18,63){$-$}
\put(48,63){$+$}
\put(78,63){$-$}
\end{picture}
}
\end{align}

\noindent
Let
$\mathbf{i}_+$  and  $\mathbf{i}_-$ be as before.
Let $\mathbf{i}=\mathbf{i}_+\,|\, \mathbf{i}_-$.
Let $\nu:I \rightarrow I$
be  the left-right reflection,
and let $\omega :I \rightarrow I$
be the top-bottom reflection,
so that $\omega\nu=\nu\omega$.
Then $\mathbf{i}_+$ is a $\nu$-period of $Q$,
and $\mathbf{i}$ is a period of $Q$.
Furthermore, 
$\mathbf{i}^{4}\,|\, \mathbf{i}_+$
 is a $\nu\omega$-period of $(Q,x,y)$,
and 
$\mathbf{i}^{9}$ is a period of $(Q,x,y)$,
where $9=5+4=h(A_4)+\ell$.
Note that $\mathbf{i}\sim_Q \mathbf{j}(\mathbf{i}_+,\nu)$
and $\mathbf{i}^{9}\sim_Q\mathbf{j}(\mathbf{i}^{4}\mathbf{i}_+,\nu\omega)$.

(b) {\em Nonsimply laced case:} $(X,\ell)=(B_4,4)$.
Let $Q$ be the following quiver with index set $I$:
\begin{align}
\label{eq:qB}
\setlength{\unitlength}{1pt}
\raisebox{-45pt}{
\begin{picture}(180,105)(0,-20)
\put(0,0){\circle{6}}
\put(30,0){\circle{6}}
\put(60,0){\circle{6}}
\put(90,0){\circle*{6}}
\put(120,0){\circle{6}}
\put(150,0){\circle{6}}
\put(180,0){\circle{6}}
\put(90,15){\circle*{6}}
\put(3,0){\vector(1,0){24}}
\put(57,0){\vector(-1,0){24}}
\put(87,0){\vector(-1,0){24}}
\put(93,0){\vector(1,0){24}}
\put(147,0){\vector(-1,0){24}}
\put(153,0){\vector(1,0){24}}
\put(0,30)
{
\put(0,0){\circle{6}}
\put(30,0){\circle{6}}
\put(60,0){\circle{6}}
\put(90,0){\circle*{6}}
\put(120,0){\circle{6}}
\put(150,0){\circle{6}}
\put(180,0){\circle{6}}
\put(90,15){\circle*{6}}
\put(27,0){\vector(-1,0){24}}
\put(33,0){\vector(1,0){24}}
\put(87,0){\vector(-1,0){24}}
\put(93,0){\vector(1,0){24}}
\put(123,0){\vector(1,0){24}}
\put(177,0){\vector(-1,0){24}}
}
\put(0,60)
{
\put(0,0){\circle{6}}
\put(30,0){\circle{6}}
\put(60,0){\circle{6}}
\put(90,0){\circle*{6}}
\put(120,0){\circle{6}}
\put(150,0){\circle{6}}
\put(180,0){\circle{6}}
\put(90,15){\circle*{6}}
\put(3,0){\vector(1,0){24}}
\put(57,0){\vector(-1,0){24}}
\put(87,0){\vector(-1,0){24}}
\put(93,0){\vector(1,0){24}}
\put(147,0){\vector(-1,0){24}}
\put(153,0){\vector(1,0){24}}
}
\put(90,-15){\circle*{6}}
%
\put(90,-12){\vector(0,1){9}}
\put(90,12){\vector(0,-1){9}}
\put(90,18){\vector(0,1){9}}
\put(90,42){\vector(0,-1){9}}
\put(90,48){\vector(0,1){9}}
\put(90,72){\vector(0,-1){9}}
\put(63,-2){\vector(2,-1){24}}
\put(63,2){\vector(2,1){24}}
\put(63,58){\vector(2,-1){24}}
\put(63,62){\vector(2,1){24}}
\put(117,28){\vector(-2,-1){24}}
\put(117,32){\vector(-2,1){24}}
%
\put(0,27){\vector(0,-1){24}}
\put(30,3){\vector(0,1){24}}
\put(60,27){\vector(0,-1){24}}
\put(120,3){\vector(0,1){24}}
\put(150,27){\vector(0,-1){24}}
\put(180,3){\vector(0,1){24}}
%
\put(0,33){\vector(0,1){24}}
\put(30,57){\vector(0,-1){24}}
\put(60,33){\vector(0,1){24}}
\put(120,57){\vector(0,-1){24}}
\put(150,33){\vector(0,1){24}}
\put(180,57){\vector(0,-1){24}}
%
\put(-14,1)
{
\put(2,2){\small $-$}
\put(2,32){\small $+$}
\put(2,62){\small $-$}
\put(32,2){\small $+$}
\put(32,32){\small $-$}
\put(32,62){\small $+$}
\put(62,2){\small $-$}
\put(62,32){\small $+$}
\put(62,62){\small $-$}
\put(92,-11){\small $+$}
\put(92,2){\small $-$}
\put(92,17){\small $+$}
\put(92,32){\small $-$}
\put(92,49){\small $+$}
\put(92,62){\small $-$}
\put(92,75){\small $+$}
\put(122,2){\small $+$}
\put(122,32){\small $-$}
\put(122,62){\small $+$}
\put(152,2){\small $-$}
\put(152,32){\small $+$}
\put(152,62){\small $-$}
\put(182,2){\small $+$}
\put(182,32){\small $-$}
\put(182,62){\small $+$}
}
\end{picture}
}
\end{align}

\noindent
Let
$\mathbf{i}^{\bullet}_+$ (resp.  $\mathbf{i}^{\bullet}_-$,
$\mathbf{i}^{\circ}_+$, $\mathbf{i}^{\circ}_-$)
be a sequence of all the distinct elements of
$I$ with property $(\bullet,+)$ (resp. $(\bullet,-)$, $(\circ,+)$,
$(\circ,-)$),
where the order of the sequence is chosen arbitrarily.
Let
\begin{align}
\label{eq:slice22}
\mathbf{i}=
(\mathbf{i}^{\bullet}_+\,|\,
 \mathbf{i}^{\circ}_+ \,|\,
\mathbf{i}^{\bullet}_-)\,|\,
(\mathbf{i}^{\bullet}_+\,|\,
 \mathbf{i}^{\circ}_- \,|\,
\mathbf{i}^{\bullet}_-).
\end{align}
Let $\nu:I \rightarrow I$
be  the left-right reflection,
and let $\omega :I \rightarrow I$
be the top-bottom reflection,
so that $\omega\nu=\nu\omega$.
Then $\mathbf{i}^{\bullet}_+\,|\,
 \mathbf{i}^{\circ}_+ \,|\,
\mathbf{i}^{\bullet}_-
$ is a $\nu$-period of $Q$,
and $\mathbf{i}$ is a period of $Q$.
Furthermore, 
$\mathbf{i}^{5}\,|\, 
(\mathbf{i}^{\bullet}_+\,|\,
 \mathbf{i}^{\circ}_+ \,|\,
\mathbf{i}^{\bullet}_-)$
 is a $\nu\omega$-period of $(Q,x,y)$,
and 
$\mathbf{i}^{11}$ is a period of $(Q,x,y)$,
where $11=7+4=h^{\vee}(B_4)+\ell$.
Note that $\mathbf{i}\sim_Q \mathbf{j}(\mathbf{i}^{\bullet}_+
\, | \, \mathbf{i}^{\circ}_+\, | \,
\mathbf{i}^{\bullet}_-,\nu)$
and $\mathbf{i}^{11}\sim_Q\mathbf{j}(
\mathbf{i}^{5}\,|\, 
(\mathbf{i}^{\bullet}_+\,|\,
 \mathbf{i}^{\circ}_+ \,|\,
\mathbf{i}^{\bullet}_-),\nu\omega)$.
See \cite{Inoue10a} for more details.
\end{Example}

\begin{Example}
{\em Cluster algebras for sine-Gordon T- and Y-systems\/}
 \cite{Nakanishi10b}.
\label{ex:sine}

The T- and Y-systems which originated from
the sine-Gordon model provide another family of
periodicities of seeds.
Let us give an example.

Let $Q$ be the following quiver with index set $I=\{1,
\dots, 13\}$.
Here all the  vertices with $\bullet$ in the same position
 in  the quivers $Q_1$,\dots,$Q_{6}$ are
identified. The quiver $Q$ is mutation equivalent to
the quiver of type $D_{13}$.

\begin{center}
\setlength{\unitlength}{1pt}
%
%
\begin{picture}(330,110)(0,-25)
%
\put(0,0)
{ 
\put(0,45){\circle{6}}
\put(30,0){\circle*{6}}
\put(30,15){\circle*{6}}
\put(30,30){\circle*{6}}
\put(30,45){\circle*{6}}
\put(30,60){\circle*{6}}
\put(15,75){\circle*{6}}
\put(30,75){\circle*{6}}
%
\put(30,12){\vector(0,-1){9}}
\put(30,18){\vector(0,1){9}}
\put(30,42){\vector(0,-1){9}}
\put(30,48){\vector(0,1){9}}
\put(30,72){\vector(0,-1){9}}
\put(18,72){\vector(1,-1){9}}
\put(27,4){\vector(-2,3){24}}
%
%
%
\put(3,-2)
{
}
\put(4,-2)
{
\put(30,0){\small $-$}
\put(30,15){\small $+$}
\put(30,30){\small $-$}
\put(30,45){\small $+$}
\put(30,60){\small $-$}
\put(30,75){\small $+$}
\put(-3,75){\small $+$}
}
}
\put(60,0)
{
\put(0,45){\circle{6}}
\put(30,0){\circle*{6}}
\put(30,15){\circle*{6}}
\put(30,30){\circle*{6}}
\put(30,45){\circle*{6}}
\put(30,60){\circle*{6}}
\put(15,75){\circle*{6}}
\put(30,75){\circle*{6}}
%
\put(30,12){\vector(0,-1){9}}
\put(30,18){\vector(0,1){9}}
\put(30,42){\vector(0,-1){9}}
\put(30,48){\vector(0,1){9}}
\put(30,72){\vector(0,-1){9}}
\put(18,72){\vector(1,-1){9}}
\put(3,43){\vector(1,-1){24}}
\put(27,32){\vector(-2,1){23}}
%
%
%
\put(3,-2)
{
}
\put(4,-2)
{
\put(30,0){\small $-$}
\put(30,15){\small $+$}
\put(30,30){\small $-$}
\put(30,45){\small $+$}
\put(30,60){\small $-$}
\put(30,75){\small $+$}
\put(-3,75){\small $+$}
}
}
\put(120,0)
{
\put(0,45){\circle{6}}
\put(30,0){\circle*{6}}
\put(30,15){\circle*{6}}
\put(30,30){\circle*{6}}
\put(30,45){\circle*{6}}
\put(30,60){\circle*{6}}
\put(15,75){\circle*{6}}
\put(30,75){\circle*{6}}
%
\put(30,12){\vector(0,-1){9}}
\put(30,18){\vector(0,1){9}}
\put(30,42){\vector(0,-1){9}}
\put(30,48){\vector(0,1){9}}
\put(30,72){\vector(0,-1){9}}
\put(18,72){\vector(1,-1){9}}
\put(3,45){\vector(1,0){24}}
\put(27,58){\vector(-2,-1){23}}
%
%
%
\put(3,-2)
{
}
\put(4,-2)
{
\put(30,0){\small $-$}
\put(30,15){\small $+$}
\put(30,30){\small $-$}
\put(30,45){\small $+$}
\put(30,60){\small $-$}
\put(30,75){\small $+$}
\put(-3,75){\small $+$}
}
}
\put(180,0)
{
\put(0,45){\circle{6}}
\put(30,0){\circle*{6}}
\put(30,15){\circle*{6}}
\put(30,30){\circle*{6}}
\put(30,45){\circle*{6}}
\put(30,60){\circle*{6}}
\put(15,75){\circle*{6}}
\put(30,75){\circle*{6}}
%
\put(30,12){\vector(0,-1){9}}
\put(30,18){\vector(0,1){9}}
\put(30,42){\vector(0,-1){9}}
\put(30,48){\vector(0,1){9}}
\put(30,72){\vector(0,-1){9}}
\put(18,72){\vector(1,-1){9}}
\put(3,48){\vector(1,1){25}}
\put(27,58){\vector(-2,-1){23}}
\put(2,48){\vector(1,2){11}}
%
%
%
\put(3,-2)
{
}
\put(4,-2)
{
\put(30,0){\small $-$}
\put(30,15){\small $+$}
\put(30,30){\small $-$}
\put(30,45){\small $+$}
\put(30,60){\small $-$}
\put(30,75){\small $+$}
\put(-3,75){\small $+$}
}
}
\put(240,0)
{
\put(0,45){\circle{6}}
\put(30,0){\circle*{6}}
\put(30,15){\circle*{6}}
\put(30,30){\circle*{6}}
\put(30,45){\circle*{6}}
\put(30,60){\circle*{6}}
\put(15,75){\circle*{6}}
\put(30,75){\circle*{6}}
%
\put(30,12){\vector(0,-1){9}}
\put(30,18){\vector(0,1){9}}
\put(30,42){\vector(0,-1){9}}
\put(30,48){\vector(0,1){9}}
\put(30,72){\vector(0,-1){9}}
\put(18,72){\vector(1,-1){9}}
\put(3,45){\vector(1,0){24}}
\put(27,32){\vector(-2,1){23}}
%
%
%
\put(3,-2)
{
}
\put(4,-2)
{
\put(30,0){\small $-$}
\put(30,15){\small $+$}
\put(30,30){\small $-$}
\put(30,45){\small $+$}
\put(30,60){\small $-$}
\put(30,75){\small $+$}
\put(-3,75){\small $+$}
}
}
\put(300,0)
{
\put(0,45){\circle{6}}
\put(30,0){\circle*{6}}
\put(30,15){\circle*{6}}
\put(30,30){\circle*{6}}
\put(30,45){\circle*{6}}
\put(30,60){\circle*{6}}
\put(15,75){\circle*{6}}
\put(30,75){\circle*{6}}
%
\put(30,12){\vector(0,-1){9}}
\put(30,18){\vector(0,1){9}}
\put(30,42){\vector(0,-1){9}}
\put(30,48){\vector(0,1){9}}
\put(30,72){\vector(0,-1){9}}
\put(18,72){\vector(1,-1){9}}
\put(27,4){\vector(-2,3){23}}
\put(3,43){\vector(1,-1){24}}
%
%
%
\put(3,-2)
{
}
\put(4,-2)
{
\put(30,0){\small $-$}
\put(30,15){\small $+$}
\put(30,30){\small $-$}
\put(30,45){\small $+$}
\put(30,60){\small $-$}
\put(30,75){\small $+$}
\put(-3,75){\small $+$}
}
}
\put(10,-20){$Q_1$}
\put(70,-20){$Q_2$}
\put(130,-20){$Q_3$}
\put(190,-20){$Q_4$}
\put(250,-20){$Q_5$}
\put(310,-20){$Q_6$}
\put(-3,25){$1$}
\put(57,25){$2$}
\put(117,25){$3$}
\put(177,25){$4$}
\put(237,25){$5$}
\put(297,25){$6$}
\put(43,-3){$7$}
\put(43,12){$8$}
\put(43,27){$9$}
\put(43,42){$10$}
\put(43,57){$11$}
\put(43,72){$12$}
\put(-10,72){$13$}
\end{picture}
\end{center}
\noindent
Let
$\mathbf{i}^{\bullet}_+$ and  $\mathbf{i}^{\bullet}_-$
be  the $I$-sequences as before.
Let
\begin{align}
\mathbf{i}=
(\mathbf{i}^{\bullet}_+\,|\,
 (1) \,|\,
\mathbf{i}^{\bullet}_-
)\,|\,
(\mathbf{i}^{\bullet}_+\,|\,
 (2) \,|\,
\mathbf{i}^{\bullet}_-
)\,|\,
\cdots
\,|\,
(\mathbf{i}^{\bullet}_+\,|\,
 (6) \,|\,
\mathbf{i}^{\bullet}_-
).
\end{align}
Let $\nu:I \rightarrow I$
be the bijection of order $6$  cyclically mapping
the vertices 1, 2, \dots, 6 to 2, 3, \dots, 1
 and fixing the rest.
Let $\omega:I \rightarrow I$
be the involution  exchanging the top two vertices with $\bullet$ 
and fixing the rest,
which is a quiver automorphism of $Q$.
Note that $\omega\nu=\nu\omega$.
Then $\mathbf{i}^{\bullet}_+\,|\,
 (1) \,|\,
\mathbf{i}^{\bullet}_-$ is a $\nu$-period of $Q$,
and
$\mathbf{i}$ is a period of $Q$.
Furthermore, 
$\mathbf{i}^{2}\,|\, (\mathbf{i}^{\bullet}_+\,|\,
 (1) \,|\,
\mathbf{i}^{\bullet}_-)$
 is a $\nu\omega$-period of $(Q,x,y)$,
and 
$\mathbf{i}^{13}$ is a period of $(Q,x,y)$,
where $13=(12+2+10+2)/2 = (h(D_7)+2+h(D_6)+2)/2$.
Note that $\mathbf{i}\sim_Q \mathbf{j}(\mathbf{i}^{\bullet}_+
\, | \, ( 1) \, | \,
\mathbf{i}^{\bullet}_-,\nu)$
and $\mathbf{i}^{13}\sim_Q\mathbf{j}(
\mathbf{i}^{2}\,|\, 
(\mathbf{i}^{\bullet}_+\,|\,
 ( 1) \,|\,
\mathbf{i}^{\bullet}_-),\nu\omega)$.
See \cite{Nakanishi10b} for more details.
\end{Example}

In summary,
we make two plain observations in these examples.

\begin{itemize}
\item[(a)] There are a variety of patterns of periodicities
of seeds.

\item[(b)] It is hard to tell whether a
repetition of a given
period of exchange matrices yields
a period of seeds
 just by looking at the shape of 
its quiver.
\end{itemize}

We expect that these examples are  just a tip of iceberg
of the whole class of periodicities of seeds
and their classification would be very challenging
but interesting.

\section{Restriction/Extension Theorem}

In this section we present Restriction/Extension Theorem
on periodicities of seeds.

We first state a very general theorem
on the relation between $g$-vectors and tropical coefficients.

Let $I=\{1,\dots,n\}$.
For a given seed $(B',x',y')$ of $\mathcal{A}(B,x,y)$,
let $\mathbf{g}'_i=(g'_{1i},\dots,g'_{ni})$ ($i\in I$)
be the $g$-vectors defined by \eqref{eq:gF}.
We also introduce integers $c'_{ij}$ ($i,j=1,\dots,n$)
by
\begin{align}
[y'_i]_{\mathbf{T}}=
\prod_{j=1}^n y_j^{c'_{ji}}.
\end{align}
Consider the matrices $C'=(c'_{ij})_{i,j=1}^n$
and $G'=(g'_{ij})_{i,j=1}^n$;
that is,  each column of $C'$ is the exponents
of a tropical coefficient,
and each column of $G'$ is a $g$-vector.
We remark that $C'$ is the bottom part of the extended exchange matrix
$\tilde{B}'$ 
with the principal coefficients
in \cite[Eq.~(2.14)]{Fomin07}.

\begin{Theorem}
\label{thm:CG}
Assume that $B$ is skew symmetric.
Then, the matrices ${C'}^{T}$ and $G'$ are inverse to each other,
where ${C'}^{T}$ is the transpose of $C'$.
\end{Theorem}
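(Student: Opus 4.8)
The plan is to establish the relation $C'{}^{T}G' = I$ inductively along any mutation sequence from the initial seed, where it holds trivially since $C = G = I$ at the initial seed (all tropical coefficients are $y_i$ and all $g$-vectors are standard basis vectors). First I would fix a single mutation $\mu_k$ and work out how the matrices $C'$ and $G'$ transform. The transformation of $C'$ is governed by the tropical version of \eqref{eq:coef}, which in the tropical semifield \eqref{eq:trop} becomes a piecewise-linear (indeed, on each sign sector, linear) map on the exponent matrix; the transformation of $G'$ is given by the known $g$-vector mutation rule of \cite{Fomin07} (derivable from \eqref{eq:gF} together with \eqref{eq:clust}), which is likewise linear on each sign sector. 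Both of these transformations are controlled by the sign of the tropical coefficient $[y'_k]_{\mathbf{T}}$, equivalently by whether the $k$-th column of $C'$ is a positive or negative Laurent monomial — and here is where skew symmetry of $B$ is essential, because Theorem \ref{thm:pos} (the sign-coherence of Conjecture \ref{conj:pos}(a),(b)) guarantees that this sign is well defined, so each column of $C'$ really is $\pm$(a monomial) and each column of $G'$ has a well-defined sign.

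The key computational step is then: writing $\varepsilon \in \{+,-\}$ for the common sign of the $k$-th column of $C'$, express the mutated matrices as $C'' = E_\varepsilon C' F_\varepsilon$-type products (or more precisely $C''$ obtained from $C'$ by a left/right multiplication by explicit elementary matrices depending on $B'$ and $\varepsilon$) and similarly $G'' = \tilde E_\varepsilon G' \tilde F_\varepsilon$, and check directly that $C''{}^{T} G'' = I$ follows from $C'{}^{T} G' = I$ by a short matrix identity among these elementary factors. Concretely I expect the relevant identity to reduce to a statement involving the matrix $B'$ and the sign vector, of the form $(I + [B'_\bullet]_\varepsilon)^{T}(I - [B'_\bullet]_\varepsilon) = I$ up to the appropriate bookkeeping, i.e. the two elementary matrices governing the $C$- and $G$-mutations at $k$ are transposes of each other's inverses. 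One convenient way to organize this is to quote the explicit mutation rules for $C$ and $G$ from \cite{Fomin07}: $c''_{ij}$ for $j \neq k$ and $g''_{ij}$ for $j \neq k$ are unchanged, $c''_{ik} = -c'_{ik}$, $g''_{ik} = -g'_{ik} + \sum_j [\varepsilon b'_{jk}]_+ g'_{ij} - \sum_j b'_{ji}[\varepsilon c'_{jk}]_+$ (and symmetrically for $C$), so that the whole verification is a finite linear-algebra check once $\varepsilon$ is fixed.

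A cleaner alternative, which I would probably prefer to present, is to deduce the statement from the factorization formulas already recorded in the excerpt: specialize \eqref{eq:Yfact} and \eqref{eq:gF} under tropical evaluation $\pi_{\mathbf{T}}$ and at $x_j = 1$, and compare with the known duality $\sum_j g'_{ij} c'_{jk} = \delta_{ik}$ that is built into the separation-of-additions formulas of \cite{Fomin07}; in the skew-symmetric case this duality is exactly the content of \cite[Eq.~(3.11)–(3.14)]{Fomin07} combined with sign-coherence. Either route, the induction closes.

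\textbf{Main obstacle.} The hard part is the sign bookkeeping: the $C$- and $G$-mutation rules are only piecewise-linear, so the induction genuinely requires sign-coherence (Theorem \ref{thm:pos}) to pin down which linear branch applies, and one must verify that the branch used for the $C$-mutation at step $k$ is compatible — via the transpose-inverse relation — with the branch used for the $G$-mutation at the same step. Getting the two sign conventions (for $\varepsilon b'_{jk}$ versus $\varepsilon c'_{jk}$, and the direction of the matrix mutation \eqref{eq:Bmut}) to line up so that $C''{}^{T}G'' = I$ drops out without a stray sign is the only real subtlety; everything else is routine.
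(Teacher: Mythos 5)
Your main route is essentially the paper's proof: induction on mutations with base case $C'=G'=I$ at the initial seed, using the $c$-vector and $g$-vector recursions of \cite{Fomin07} and sign-coherence (Theorem \ref{thm:pos}) to pin down the linear branch, so that ${C''}^{T}G''=I$ follows from ${C'}^{T}G'=I$ by a finite linear-algebra check. The only ingredient you leave implicit is the compatibility relation $\sum_{p} g'_{ip}b'_{pj}=\sum_{p} b_{ip}c'_{pj}$ (\cite[Eq.~(6.14)]{Fomin07}), which the paper explicitly invokes to close the computation because the $g$-vector recursion mixes $B$ and $B'$; also, your ``cleaner alternative'' should be dropped, since the duality $\sum_j g'_{ij}c'_{jk}=\delta_{ik}$ it quotes is (up to transpose) precisely the statement being proved.
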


\begin{proof}
We prove it by the induction on mutations.
When $(B',x',y')=(B,x,y)$, the claim holds because
$C'=G'=I$.
Suppose that $(B'',x'',y'')=\mu_k(B',x',y')$.
Let $C''$ and $G''$ be the corresponding matrices.
Then, the following recursion relations hold
 \cite[Eq.(5.9) \& Proposition 6.6]{Fomin07}
\begin{align}
\label{eq:cmut}
c''_{ij}&=
\begin{cases}
-c'_{ik}& j=k\\
c'_{ij} + \frac{1}{2}(|c'_{ik}|b'_{kj}+ c'_{ik}|b'_{kj}|)
& j \neq k,
\end{cases}
\\
\label{eq:gmut}
g''_{ij}
&=
\begin{cases}
\displaystyle
-g'_{ik} + 
\sum_{p=1}^n g'_{ip} [b'_{pk}]_+ 
-
\sum_{p=1}^n b_{ip} [c'_{pk}]_+ 
&
j=k,\\
g'_{ij}& j\neq k,
\end{cases}
\end{align}
where $[a]_+=a$ if $a> 0$ and 0 otherwise.
Also the following relation holds \cite[Eq.(6.14)]{Fomin07}
\begin{align}
\sum_{p=1}^n g'_{ip} b'_{pj}
=\sum_{p=1}^n b_{ip} c'_{pj}.
\end{align}
We need one more fact: for each $j$, $c_{ij}$ 
are simultaneously nonnegative or nonpositive
for all $j=1,\dots,n$.
This is proved for $B$ is skew symmetric
by Theorem \ref{thm:pos}.
Then, using these facts and the induction hypothesis ${C'}^{T}G'=I$,
the claim ${C''}^{T}G''=I$ can be easily verified.
\end{proof}

\begin{Remark}
Theorem \ref{thm:CG} appeared in \cite{Keller10,Plamondon10b} implicitly.
Namely, the claim is also a consequence of
the following formula
in \cite[Corollaries 6.9 \& 6.13]{Keller10}
and  \cite[Proposition 3.6]{Plamondon10b}:
\begin{align}
g'_{ij}&=[\mathrm{ind}_{T} T'_j:T_i] ,\\
c'_{ij}&=[\mathrm{ind}^{\mathrm{op}}_{T'}T_i:T'_j] ,
\end{align}
where $T=\bigoplus_{i=1}^n T_i$ 
and $T'=\bigoplus_{i=1}^n T'_i$ are
some objects in the `cluster category' for $\mathcal{A}(B,x,y)$.
We ask the reader to consult \cite{Plamondon10b} and
the forthcoming paper \cite{Plamondon10c} for details.
\end{Remark}

Now let us turn to the main statement of the section.

For a pair of index sets $I\subset \tilde{I}$,
suppose that
there is a pair of skew symmetric matrices
 $B
=(b_{ij})_{i,j\in I}$ 
and
 $\tilde{B}
=(\tilde{b}_{ij})_{i,j\in \tilde{I}}$ 
such that
$B=\tilde{B}\vert_I$ under the restriction of the index set
$\tilde{I}$ to $I$.
 (In terms of quivers,
 $Q$ is a full subquiver of $\tilde{Q}$.)
Then, we say that
$B$ is the {\em $I$-restriction\/} of $\tilde{B}$
and
 $\tilde{B}$ is an {\em $\tilde{I}$-extension\/}
of $B$.

\begin{Theorem}[Restriction/Extension Theorem]
\label{thm:r/e}
For $I\subset \tilde{I}$,
assume that $B$ and $B'$ are
skew symmetric matrices such that
$B$ is the $I$-restriction of $\tilde{B}$
and  $\tilde{B}$ is an $\tilde{I}$-extension of $B$.
\par
(a) (Restriction)
Suppose that
an $I$-sequence
 $\mathbf{i}=(i_1,\dots,i_r)$ is a period 
of $(\tilde{B},\tilde{x},\tilde{y})$ in
 $\mathcal{A}(\tilde{B},\tilde{x},\tilde{y})$,
Then, $\mathbf{i}$ is also a period 
of $(B,x,y)$  in
 $\mathcal{A}(B,x,y)$.
\par
(b) (Extension)
Suppose that
an $I$-sequence
 $\mathbf{i}=(i_1,\dots,i_r)$ is a period 
of $(B,x,y)$  in
 $\mathcal{A}(B,x,y)$.
Then, $\mathbf{i}$ is also a period 
of $(\tilde{B},\tilde{x},\tilde{y})$ in
 $\mathcal{A}(\tilde{B},\tilde{x},\tilde{y})$.
\end{Theorem}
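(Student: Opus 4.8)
The plan is to reduce everything, via Theorem~\ref{thm:tropperiod} (legitimate since $B$ and $\tilde B$ are skew symmetric), to a statement about the matrices of tropical-coefficient exponents, and then to exploit the triangular structure forced by the fact that $\mathbf i$ consists only of mutations at indices in $I$. So the first thing I would establish is an elementary structural lemma: in $\mathcal A(\tilde B,\tilde x,\tilde y)$ (with principal coefficients), perform any sequence of mutations at indices in $I$ only. Then (i) the cluster variables $\tilde x_l$ with $l\in\tilde I\setminus I$ are never touched, so their $g$-vectors remain $\mathbf e_l$; (ii) in \eqref{eq:cmut} the rows of the $C$-matrix indexed by $l\in\tilde I\setminus I$ stay equal to $\mathbf e_l^{T}$ (at a mutation at $k\in I$ the only terms that could alter such a row are multiples of entries that are already $0$); and (iii) restricting \eqref{eq:Bmut} and \eqref{eq:cmut} to indices in $I$ involves only entries indexed by $I$, so the $I$-submatrices of the exchange matrix and of the $C$-matrix evolve exactly as for $\mathcal A(B,x,y)$ under the same sequence. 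Hence, after $\mu_{\mathbf i}$,
\begin{align*}
\tilde C^{(r)}=\begin{pmatrix} C^{(r)} & D\\ 0 & \mathrm{id}\end{pmatrix}
\qquad (\text{blocks ordered $I$, then $\tilde I\setminus I$}),
\end{align*}
where $C^{(r)}$ is the $C$-matrix of $\mathcal A(B,x,y)$ after $\mu_{\mathbf i}$; correspondingly, by Theorem~\ref{thm:CG} applied to $\tilde B$, the matrix $\tilde G^{(r)}=\bigl((\tilde C^{(r)})^{T}\bigr)^{-1}$ is also block lower triangular (same ordering) with $(2,2)$-block $\mathrm{id}$.

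Part (a) is then immediate: if $\mathbf i$ is a period of $(\tilde B,\tilde x,\tilde y)$, Theorem~\ref{thm:tropperiod} gives $\tilde C^{(r)}=\mathrm{id}$; reading off the $I$-block gives $C^{(r)}=\mathrm{id}$, and Theorem~\ref{thm:tropperiod} again shows $\mathbf i$ is a period of $(B,x,y)$.

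For part (b), assume $\mathbf i$ is a period of $(B,x,y)$, i.e.\ $C^{(r)}=\mathrm{id}$ (and hence the $G$-matrix of $\mathcal A(B,x,y)$ is $\mathrm{id}$, by Theorem~\ref{thm:CG}); then $\tilde C^{(r)}=\left(\begin{smallmatrix}\mathrm{id}&D\\0&\mathrm{id}\end{smallmatrix}\right)$ and $\tilde G^{(r)}=\left(\begin{smallmatrix}\mathrm{id}&0\\-D^{T}&\mathrm{id}\end{smallmatrix}\right)$, so it suffices to prove $D=0$. For each $j\in\tilde I\setminus I$ the $j$-th column of $\tilde C^{(r)}$ has $1$ in its $j$-th entry, so by the sign coherence of $\mathbf c$-vectors (part~(a) of Conjecture~\ref{conj:pos}, valid for $\tilde B$ by Theorem~\ref{thm:pos}: each tropical coefficient is a positive or negative monomial) that column is entrywise $\ge 0$; hence $D\ge 0$. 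Dually, for each $l\in\tilde I\setminus I$ the $l$-th row of $\tilde G^{(r)}$ has $1$ in its $l$-th entry, so by the sign coherence of $g$-vectors (part~(b) of Conjecture~\ref{conj:pos}, again by Theorem~\ref{thm:pos}: for a fixed component, the corresponding entries of all the $g$-vectors of a seed have a common sign) that row is entrywise $\ge 0$; hence $-D^{T}\ge 0$, i.e.\ $D\le 0$. Therefore $D=0$, so $\tilde C^{(r)}=\mathrm{id}$, and $\mathbf i$ is a period of $(\tilde B,\tilde x,\tilde y)$ by Theorem~\ref{thm:tropperiod}.

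The step I expect to be the main obstacle is this last one, the vanishing of the off-diagonal block $D$: triangularity together with a single application of sign coherence yields only $D\ge 0$, and pinning $D$ down to $0$ genuinely requires both halves of Theorem~\ref{thm:pos}, coupled through the mutual-inverse relation $(C^{(r)})^{T}G^{(r)}=\mathrm{id}$ of Theorem~\ref{thm:CG}. The remaining ingredients — the structural lemma (i)--(iii), the block manipulations, and the two passages through Theorem~\ref{thm:tropperiod} — amount to routine bookkeeping with the mutation formulas \eqref{eq:Bmut}--\eqref{eq:clust}.
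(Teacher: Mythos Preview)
Your proof is correct and follows essentially the same route as the paper's. The only cosmetic differences are that for part (a) the paper argues by direct specialization of seeds rather than invoking Theorem~\ref{thm:tropperiod}, and for part (b) it first reduces to a single external index $n+1$ before running the identical sign-coherence-plus-$C^{T}G=\mathrm{id}$ argument; your block treatment handles all external indices at once, which is slightly cleaner.
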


\begin{proof}
\par
(a) This is a trivial part.
In general,
the seed
$(B',x',y')=\mu_{\mathbf{i}}(B,x,y)$
is obtained from
the seed
$
(\tilde{B}',\tilde{x}',\tilde{y}')
=
\mu_{\mathbf{i}}(\tilde{B},\tilde{x},\tilde{y})$
by
restricting the index set $\tilde{I}$
of $(\tilde{B}',\tilde{x}',\tilde{y}')$ to $I$
{\em and\/} specializing the `external' initial cluster variables
$x_j$ ($j\in \tilde{I}\setminus I$) to $1$ appearing
in the `internal' cluster variables $\tilde{x}'_i$ ($i\in I$).
Thus, the periodicity of $(B,x,y)$
follows from the periodicity of $(\tilde{B},\tilde{x},\tilde{y})$.
\par
(b) Set  $(\tilde{B}',\tilde{x}',
\tilde{y}'):=\mu_{\mathbf{i}}
(\tilde{B},\tilde{x},\tilde{y})$.
Thanks to Theorem \ref{thm:tropperiod},
it is enough to prove 
\begin{align}
\label{eq:ytp}
[\tilde{y}'_i]_{\mathbf{T}}=
\tilde{y}_i
\quad (i\in \tilde{I}).
\end{align}
Since the `external' coefficients $\tilde{y}'_j$
($j\in \tilde{I}\setminus I$)
do not influence each other
under the mutation $\mu_{\mathbf{i}}$,
it is enough to verify \eqref{eq:ytp}
when there is {\em only one\/} external
  index $j\in \tilde{I}\setminus I$.
Thus, we may assume $I=\{1,\dots,n\}$ and $\tilde{I}= \{1,
\dots,n+1\}$.
\par
By the periodicity assumption
 and the exchange relations
\eqref{eq:coef} and \eqref{eq:clust}, we know
{\em a priori\/}
 the following periodicities hold.
\begin{align}
\label{eq:yp}
[\tilde{y}'_i]_{\mathbf{T}}&=
\tilde{y}_i\quad
(i\in I),\\
\label{eq:xp}
\tilde{x}'_{n+1}&=\tilde{x}_{n+1}.
\end{align}
Let us show that \eqref{eq:yp} and \eqref{eq:xp} imply
\eqref{eq:ytp}.
We introduce the integers
$c'_{ij}$ ($i,j\in \tilde{I}$) by
\begin{align}
\label{eq:cint}
[\tilde{y}'_i]_{\mathbf{T}}&=
\prod_{j=1}^{n+1} 
\tilde{y}_j^{c_{ji}'}.
\end{align}
Let $\mathbf{g}'_i=(g'_{ji})_{j=1}^{n+1}$ be the $g$-vector
for $\tilde{x}'_i$ ($i\in \tilde{I}$) in \eqref{eq:gF}.
Then, by Theorem \ref{thm:CG}
the transpose of $C'=(c'_{ij})_{i,j=1}^{n+1}$ 
and $G'=(g'_{ij})_{i,j=1}^{n+1}$ are inverse to each other,
i.e.,
\begin{align}
\label{eq:inv}
{C'}^T G'=I.
\end{align}
Meanwhile, the equalities
\eqref{eq:yp} and \eqref{eq:xp} imply that ${C'}^T$ and $G'$ have the
following form:
\begin{align}
{C'}^T=
\left(
\begin{array}{ccc|c}
1&&&0\\
&\ddots&&\vdots\\
&&1&0\\
\hline
*&\cdots&*&*\\
\end{array}
\right),
\quad
G'=
\left(
\begin{array}{ccc|c}
*&\cdots&*&0\\
\vdots&&\vdots&\vdots\\
*&\cdots&*&0\\
\hline
*&\cdots&*&1\\
\end{array}
\right).
\end{align}
The condition \eqref{eq:inv}
further imposes that the matrix has the form
\begin{align}
{C'}^T=
\left(
\begin{array}{ccc|c}
1&&&0\\
&\ddots&&\vdots\\
&&1&0\\
\hline
\alpha_1&\cdots&\alpha_n&1\\
\end{array}
\right),
\quad
G'=
\left(
\begin{array}{ccc|c}
1&&&0\\
&\ddots&&\vdots\\
&&1&0\\
\hline
\beta_1&\cdots&\beta_{n}&1\\
\end{array}
\right).
\end{align}
with $\alpha_i=-\beta_i$.
Now we recall Theorem \ref{thm:pos}.

(a) The Laurent monomial \eqref{eq:cint}
is either positive or negative.
So we have $\alpha_i \geq 0$.

(b) For each $i$, $g'_{ij}$ ($j=1,\dots,n+1$)
are simultaneously nonpositive or nonnegative.
So we have $\beta_i \geq 0$.

Thus,  we have $\alpha_i=\beta_i=0$
and we conclude that $C'=G'=I$;
therefore, \eqref{eq:ytp} holds.
\end{proof}

\begin{Remark}
The extension theorem (b)
was partially formulated by Keller \cite{Keller08b},
where only the periodicity of $\tilde{B}$ was considered
for the periodicities of seeds for `pairs of Dynkin diagrams'
 studied in \cite{Keller10}.
His result motivates us to formulate
Theorem \ref{thm:r/e} (b).
A generalized version of the result of \cite{Keller08b}
will be contained in \cite{Plamondon10c}.
\end{Remark}

Theorem \ref{thm:r/e} tells that
one may assume
without losing much generality
that the components of a period $\mathbf{i}$
of a seed under study  exhaust
the index set $I$.

It may be  worth to mention that
the involution property of a mutation $\mu_i$
is also regarded as the simplest case of
Extension Theorem applied for the 
subquiver of type $A_1$ consisting of a single vertex $i$.

\section{T-systems and Y-systems}
\label{sec:T}
In this section,
we define the T- and Y-systems
associated with any  period of
an exchange matrix.
We do it in two steps.
First, we treat the special case when
the period is `regular'.
The corresponding T- and Y-systems 
 are natural generalizations of the known `classic' T-
and Y-systems.
Next, the notions of T- and Y-systems are
further extended to  the general case.
The latter ones will be used in Section \ref{sec:dilog}.
We stress that in this section  we do {\em not\/}  assume the
periodicity of seeds.

\subsection{Regular period}

Let $B$ a skew symmetrizable matrix with index set $I$.

\begin{Definition}
\label{def:regular}
We say that a $\nu$-period
$\mathbf{i}$ of $B$
 is {\em regular\/} if
it is a complete system of representatives
of the $\nu$-orbits in $I$;
in other words,  it satisfies
the following conditions:
\begin{itemize}
\item[(A1)] All the components of $\mathbf{j}(\mathbf{i},\nu)=\mathbf{i}
\,|\, \nu(\mathbf{i})\,|\, \cdots \,|\, \nu^{g-1}(\mathbf{i})$
exhaust $I$,
where $g$ is the order of $\nu$.
\item[(A2)] All the components of $\mathbf{i}$ belong
to  distinct $\nu$-orbits in $I$.
\end{itemize}
\end{Definition}

\begin{Example}
\label{ex:adm}
 In the previous examples,
we have the following regular $\nu$-periods of $B$ or $Q$.

In Example \ref{ex:preperiod},
$\mathbf{i}_+\, | \, \mathbf{i}_-$ is a regular period of $B$.

In Example \ref{ex:G5},
$\mathbf{i}^{\bullet}_+\,|\,
 \mathbf{i}^{\circ}_{+,1} \,|\,
\mathbf{i}^{\bullet}_-\,|\,
 \mathbf{i}^{\circ}_{+,4}
$
is a regular $\nu$-period of $Q$.

In Example \ref{ex:fordy},
$(1,2)$ 
is a regular $\rho^2$-period of $Q$.

\par
In Example \ref{ex:finite} (a),
$\mathbf{i}_+ \,  | \, \mathbf{i}_-$ is a regular period of $Q$.
\par
In Example \ref{ex:finite} (b),
$\mathbf{i}_+$ is a regular $\nu$-period of $Q$.
\par
In Example \ref{ex:affine} (a),
$\mathbf{i}_+$ is a regular $\nu$-period of $Q$.
\par
In Example \ref{ex:affine} (b),
$\mathbf{i}^{\bullet}_+
\, | \, \mathbf{i}^{\circ}_+\, | \,
\mathbf{i}^{\bullet}_-$ is a regular $\nu$-period of $Q$.
\par
In Example \ref{ex:sine},
$\mathbf{i}^{\bullet}_+
\, | \, (1 )\, | \,
\mathbf{i}^{\bullet}_-$ is a regular $\nu$-period of $Q$.
\end{Example}


Suppose that $\mathbf{i}=(i_1,\dots,i_r)$ is a $\nu$-period
of $B$.
Let us decompose $\mathbf{i}$ into $t$ parts as follows:
\begin{align}
\label{eq:slice}
\mathbf{i}&=\mathbf{i}(0)\,|\,  \mathbf{i}(1)\,|\, 
\cdots \,|\, \mathbf{i}(t-1),\\
\mathbf{i}(p)&=
(i(p)_1,\dots,i(p)_{r_p}),
\quad
\sum_{p=0}^{t-1} r_{p} = r.
\end{align}
Let $B(p)$ ($p=0,\dots,t-1$) be the matrices
defined by the sequence of 
mutations
\begin{align}
\label{eq:Bmutseq}
B(0)=B
\
\mathop{\longrightarrow}^{\mu_{\mathbf{i}(0)}}
\
B(1)
\
\mathop{\longrightarrow}^{\mu_{\mathbf{i}(1)}}
\
\cdots
\
\mathop{\longrightarrow}^{\mu_{\mathbf{i}(t-1)}}
\
B(t)=\nu(B),
\end{align}
where $\nu(B)=(b'_{ij})$ is the matrix defined by
$b'_{\nu(i)\nu(j)}=b_{ij}$.

\begin{Definition}
A decomposition \eqref{eq:slice}
  of a  $\nu$-period 
$\mathbf{i}$ of $B$ is called a  {\em slice\/} of $\mathbf{i}$
(of length $t$) if it satisfies
the following condition:
\begin{align}
\label{eq:compat}
b(p)_{i(p)_a, i(p)_b}=0
\quad (1\leq a,b \leq r_p)\quad
\mbox{for any $p=0,\dots,t-1$.}
\end{align}
\end{Definition}

For any $\mathbf{i}$,
there is at least one slice of $\mathbf{i}$,
i.e., the one
 $\mathbf{i}=( i_1)\,|\,
(i_2) \,|\, \cdots \,|\, ( i_r)$ of maximal length.
In general, there may be several slices of $\mathbf{i}$.

If
$\mathbf{i}=\mathbf{i}(0)\,|\, 
\cdots \,|\, \mathbf{i}(t-1)$
is a slice of a $\nu$-period of $B$,
 then, due to Lemma \ref{lem:order},
each composite mutation $\mu_{\mathbf{i}(p)}$
in \eqref{eq:Bmutseq} does not depend on the order
of the sequence $\mathbf{i}(p)$ and it is involutive.
Furthermore, the sequence  \eqref{eq:Bmutseq}
is extended to the infinite one
\begin{align}
\label{eq:Bmutseq1}
\begin{split}
&\hskip100pt
\cdots
\
\mathop{\longleftrightarrow}^{\mu_{\nu^{-1}(\mathbf{i}(t-2))}}
\
B(-1)
\
\mathop{\longleftrightarrow}^{\mu_{\nu^{-1}(\mathbf{i}(t-1))}}
\\
&B(0)
\
\mathop{\longleftrightarrow}^{\mu_{\mathbf{i}(0)}}
\
B(1)
\
\mathop{\longleftrightarrow}^{\mu_{\mathbf{i}(1)}}
\
\cdots
\
\mathop{\longleftrightarrow}^{\mu_{\mathbf{i}(t-2)}}
\
B(t-1)
\mathop{\longleftrightarrow}^{\mu_{\mathbf{i}(t-1)}}
\\
&B(t)
\
\mathop{\longleftrightarrow}^{\mu_{\nu(\mathbf{i}(0))}}
B(t+1)
\
\mathop{\longleftrightarrow}^{\mu_{\nu(\mathbf{i}(1))}}
\
\cdots
\
\mathop{\longleftrightarrow}^{\mu_{\nu(\mathbf{i}(t-2))}}
\
B(2t-1)
\mathop{\longleftrightarrow}^{\mu_{\nu(\mathbf{i}(t-1))}}
\\
&B(2t)
\
\mathop{\longleftrightarrow}^{\mu_{\nu^2(\mathbf{i}(0))}}
\
\cdots,
\end{split}
\end{align}
where $B(nt)=\nu^n(B)$ for $n\in \mathbb{Z}$.
In particular,
$B(gt)=B$, where $g$ is the order of $\nu$.
Thus, the sequence \eqref{eq:Bmutseq1}
has a period $gt$ with respect to $u$.

\begin{Example}
\label{ex:slice}
For the regular $\nu$-periods of $B$ or $Q$ in Examples \ref{ex:adm},
we have the following slices.

In Example \ref{ex:preperiod},
$\mathbf{i}_+\, |\,
\mathbf{i}_-$ itself is a slice  of length 2,
and $gt=2$.
\par
In Example \ref{ex:G5},
$
\mathbf{i}^{\bullet}_+\,|\,
 \mathbf{i}^{\circ}_{+,1} \,|\,
\mathbf{i}^{\bullet}_-\,|\,
 \mathbf{i}^{\circ}_{+,4}
=
(\mathbf{i}^{\bullet}_+\,|\,
 \mathbf{i}^{\circ}_{+,1}) \,|\,
(\mathbf{i}^{\bullet}_-\,|\,
 \mathbf{i}^{\circ}_{+,4})
$
is  a slice  of length 2,
and $gt=10$.
\par
In Example \ref{ex:fordy},
$(1,2)=(1)\, | \, (2) $ is a slice of length 2, and $gt=6$.
\par
In Example \ref{ex:finite} (a),
$\mathbf{i}_+\, |\,
\mathbf{i}_-$ itself is  a slice  of length 2,
and $gt=2$.
\par
In Example \ref{ex:finite} (b),
$\mathbf{i}_+$ itself is  a slice  of length 1,
and $gt=2$.
\par
In Example \ref{ex:affine} (a),
$\mathbf{i}_+$ itself is a slice of length 1,
and $gt=2$.
\par
In Example \ref{ex:affine} (b),
$\mathbf{i}^{\bullet}_+
\, | \, \mathbf{i}^{\circ}_+\, | \,
\mathbf{i}^{\bullet}_-=
(\mathbf{i}^{\bullet}_+
\, | \, \mathbf{i}^{\circ}_+)\, | \,
\mathbf{i}^{\bullet}_-$ is
a slice  of length 2,
and $gt=4$.
\par
In Example \ref{ex:sine},
$
\mathbf{i}^{\bullet}_+
\, | \, (1 )\, | \,
\mathbf{i}^{\bullet}_-
=
(\mathbf{i}^{\bullet}_+
\, | \, (1 ))\, | \,
\mathbf{i}^{\bullet}_-$ is
a slice  of length 2,
and $gt=12$.
\end{Example}

\subsection{T- and Y-systems for regular period}
\label{subsec:TY1}

Here we introduce the T- and Y-systems for {\em regular\/} $\nu$-periods,
which are especially important in applications.
The T- and Y-systems for general $\nu$-periods
 will be treated in Section \ref{subsec:nonadm}.

Assume that
$\mathbf{i}=\mathbf{i}(0)\,|\, 
\cdots \,|\, \mathbf{i}(t-1)$
is a slice of a regular $\nu$-period of $B$.

In view of \eqref{eq:Bmutseq1},
we set $(B(0),x(0),y(0)):=(B,x,y)$ (the initial seed
of $\mathcal{A}(B,x,y)$),
and  consider the corresponding infinite
sequence of mutations of {\em seeds\/}
\begin{align}
\label{eq:seedmutseq}
\begin{split}
\cdots
\mathop{\longleftrightarrow}^{\mu_{\nu^{-1}(\mathbf{i}(t-2))}}
\
&(B(-1),x(-1),y(-1))
\
\mathop{\longleftrightarrow}^{\mu_{\nu^{-1}(\mathbf{i}(t-1))}}
\
(B(0),x(0),y(0))
\
\mathop{\longleftrightarrow}^{\mu_{\mathbf{i}(0)}}
\\
&(B(1),x(1),y(1))
\
\mathop{\longleftrightarrow}^{\mu_{\mathbf{i}(1)}}
\
(B(2),x(2),y(2))
\
\mathop{\longleftrightarrow}^{\mu_{\mathbf{i}(2)}}
\
\cdots,
\end{split}
\end{align}
thereby introducing a family of clusters $x(u)$ ($u\in \mathbb{Z}$)
and coefficients tuples $y(u)$ ($u\in \mathbb{Z}$).

We define a subset $P_+$ of $I\times \mathbb{Z}$ by
$(i,u)\in P_+$
if  and only if $i$ is a component of $\nu^{m}(\mathbf{i}(k))$
for $u=mt+k$ ($m\in \mathbb{Z}, 0\leq k\leq t-1$).
Plainly speaking, $(i,u)\in P_+$ 
 is a {\em forward mutation point\/}
in \eqref{eq:seedmutseq}.
Similarly, we 
define a subset $P_-$ of $I\times \mathbb{Z}$ by
$(i,u)\in P_-$
if and only if $(i,u-1)\in P_+$, namely,
$(i,u)\in P_-$ is a {\em backward mutation point\/}
in \eqref{eq:seedmutseq}. Below we mainly use $P_+$.
(Alternatively, one may use $P_-$ throughout.)

Let $g$ be the order of $\nu$.
For each $i\in I$, let $g_i$ be the smallest positive
integer such that $\nu^{g_i}(i)=i$.
Therefore, $g_i$ is a divisor of $g$.
Note that $(i,u)\in P_+$
if and only if $(i,u+tg_i)\in P_+$.
It is convenient to define a subset
$\tilde{P}_+$ of $I \times \frac{1}{2}\mathbb{Z}$
by  $(i,u)\in \tilde{P}_+$
if and only if $(i,u+\frac{tg_i}{2})\in P_+$.
Consequently, we have
\begin{align}
\textstyle
(i,u) \in \tilde{P}_+
\Longleftrightarrow 
(i,u\pm\frac{tg_i}{2}) \in P_+.
\end{align}

First, we explain what is the Y-system, in short.
The sequence of mutations \eqref{eq:seedmutseq} gives
various relations among coefficients $y_i(u)$ ($(i,u)\in I 
\times \mathbb{Z}$) by the exchange relation
\eqref{eq:coef}.
Then,
thanks to the assumption (A1) in Definition 
\ref{def:regular},
all these coefficients are 
 products of the
`generating' coefficients $y_i(u)$ and $1+y_i(u)$ ($(i,u)\in P_+$)
and their inverses.
Here we also used the fact
that $y_i(u)=y_i(u-1)^{-1}$ for $(i,u)\in P_-$.
Furthermore, these generating coefficients
obey some relations,
which are the Y-system.

\begin{figure}
\begin{center}
\begin{picture}(340,75)(0,5)
\put(0,0){
\drawline(0,20)(150,20)
\drawline(0,80)(150,80)
\drawline(30,50)(120,50)
\drawline(30,20)(30,22)
\drawline(30,29)(30,31)
\drawline(30,39)(30,41)
\drawline(30,49)(30,51)
\drawline(30,59)(30,61)
\drawline(30,69)(30,71)
\drawline(30,79)(30,80)
\drawline(120,20)(120,22)
\drawline(120,29)(120,31)
\drawline(120,39)(120,41)
\drawline(120,49)(120,51)
\drawline(120,59)(120,61)
\drawline(120,69)(120,71)
\drawline(120,79)(120,80)
\drawline(60,50)(60,65)
\drawline(90,50)(90,35)
%
\drawline(28,48)(32,52)
\drawline(32,48)(28,52)
\drawline(118,48)(122,52)
\drawline(122,48)(118,52)
\drawline(58,63)(62,67)
\drawline(62,63)(58,67)
\drawline(88,33)(92,37)
\drawline(92,33)(88,37)
\put(60,50){\circle*{3}}
\put(90,50){\circle*{3}}
\put(28,5){\small $u$}
\put(108,5){\small $u+tg_i$}
\put(18,47){\small $i$}
\put(48,62){\small $j$}
}
\put(190,0){
\drawline(0,20)(150,20)
\drawline(0,80)(150,80)
\drawline(30,50)(120,50)
\drawline(10,65)(60,65)
\drawline(20,35)(90,35)
\drawline(30,20)(30,22)
\drawline(30,29)(30,31)
\drawline(30,39)(30,41)
\drawline(30,49)(30,51)
\drawline(30,59)(30,61)
\drawline(30,69)(30,71)
\drawline(30,79)(30,80)
\drawline(120,20)(120,22)
\drawline(120,29)(120,31)
\drawline(120,39)(120,41)
\drawline(120,49)(120,51)
\drawline(120,59)(120,61)
\drawline(120,69)(120,71)
\drawline(120,79)(120,80)
\drawline(30,50)(30,65)
\drawline(30,50)(30,35)
%
\drawline(28,48)(32,52)
\drawline(32,48)(28,52)
\drawline(118,48)(122,52)
\drawline(122,48)(118,52)
\drawline(58,63)(62,67)
\drawline(62,63)(58,67)
\drawline(8,63)(12,67)
\drawline(12,63)(8,67)
\drawline(88,33)(92,37)
\drawline(92,33)(88,37)
\drawline(18,33)(22,37)
\drawline(22,33)(18,37)
\put(30,65){\circle*{3}}
\put(30,35){\circle*{3}}
\put(28,5){\small $u$}
\put(108,5){\small $u+tg_i$}
\put(18,47){\small $i$}
\put(-2,62){\small $j$}
}
\end{picture}
\end{center}
\caption{Schematic diagrams for Y-system (left) and T-system (right).
A marked point represents a forward mutation point.
A vertical bond represents the fact $b_{ki}(v)\neq 0$.
}
\label{fig:diagram}
\end{figure}
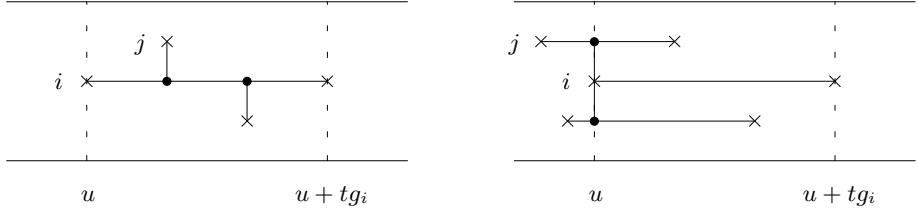
Let us write down the  relations explicitly.
Take $(i,u)\in P_+$ and consider the mutation
at $(i,u)$,
where $y_i(u)$ is exchanged to
$y_i(u+1)=y_i(u)^{-1}$.
Then, for each $(j,v)\in P_+$ such that
 $v\in (u,u+tg_i)$
(i.e., $u <v < u+tg_i$),
$y_i(v)$ are multiplied by factors $(1+y_j(v))^{-b_{ji}(v)}$ for
 $b_{ji}(v)<0$ and
$(1+y_j(v)^{-1})^{-b_{ji}(v)}$ for $b_{ji}(v)>0$.
The result coincides with the coefficient $y_i(u+tg_i)$
by the assumption (A2) in Definition \ref{def:regular}.
See Figure \ref{fig:diagram}.
In summary, we have the following relations:
For  $(i,u)\in P_+$,
\begin{align}
\label{eq:yi'}
\textstyle
y_i\left(u \right)y_i\left(u+tg_i \right)
&=
\frac{
\displaystyle
\prod_{(j,v)\in P_+} (1+y_j(v))^{G'_+(j,v;i,u)}
}
{
\displaystyle
\prod_{(j,v)\in P_+} (1+y_j(v)^{-1})^{G'_-(j,v;i,u)}
},\\
\label{eq:G'}
G'_{\pm}(j,v;i,u)&=
\begin{cases}
\mp b_{ji}(v) & v\in (u, u+tg_i),
b_{ji}(v)\lessgtr 0\\
0 & \mbox{otherwise}.
\end{cases}
\end{align}
Or, equivalently, for  $(i,u)\in \tilde{P}_+$,
\begin{align}
\label{eq:yi}
\textstyle
y_i\left(u-\frac{tg_i}{2}\right)y_i\left(u+\frac{tg_i}{2}\right)
&=
\frac{
\displaystyle
\prod_{(j,v)\in P_+} (1+y_j(v))^{G_+(j,v;i,u)}
}
{
\displaystyle
\prod_{(j,v)\in P_+} (1+y_j(v)^{-1})^{G_-(j,v;i,u)}
},\\
\label{eq:G}
G_{\pm}(j,v;i,u)&=
\begin{cases}
\mp b_{ji}(v) & v\in (u-\frac{tg_i}{2}, u+\frac{tg_i}{2}),
b_{ji}(v)\lessgtr 0\\
0 & \mbox{otherwise}.
\end{cases}
\end{align}
We call the system of relations \eqref{eq:yi}
 the
{\em Y-system associated with a slice 
$\mathbf{i}=\mathbf{i}(0)\,|\, 
\cdots \,|\, \mathbf{i}(t-1)$
 of a regular $\nu$-period of $B$}.

Next, we explain what is the T-system, in short.
The sequence of mutations \eqref{eq:seedmutseq} gives
various relations among cluster variables $x_i(u)$ ($(i,u)\in I 
\times \mathbb{Z}$) by the exchange relation
\eqref{eq:clust}.
Again, thanks to the assumption (A1) in Definition 
\ref{def:regular},
all these coefficients are represented 
by the
`generating' cluster variables $x_i(u)$ ($(i,u)\in P_+$).
Furthermore, these generating cluster variables
obey some relations,
which are the T-system.

Let us write down the relations explicitly.
Take $(i,u)\in P_+$ and consider the mutation
at $(i,u)$.
Then, by \eqref{eq:clust}
and the assumption (A2) in Definition \ref{def:regular},
 we have
\begin{align}
\label{eq:xi'}
\begin{split}
x_i(u)x_i(u+tg_i)
&=
\frac{y_i(u)}{1+y_i(u)}
\prod_{(j,v)\in P_+} x_j(v)^{H'_+(j,v;i,u)}\\
&\quad
+
\frac{1}{1+y_i(u)}
\prod_{(j,v)\in P_+} x_j(v)^{H'_-(j,v;i,u)},
\end{split}\\
H'_{\pm}(j,v;i,u)&=
\begin{cases}
\pm b_{ji}(u)&
u\in (v- tg_j,v), b_{ji}(u)\gtrless 0\\
0 & \mbox{otherwise}.
\end{cases}
\end{align}
See Figure \ref{fig:diagram}.
By introducing the `shifted cluster variables'
$\tilde{x}_i(u):=x_i(u+\frac{tg_i}{2})$ for $(i,u)\in \tilde{P}_+$,
these relations can be written 
 in a more `balanced' form
and become parallel to \eqref{eq:yi}
as follows:
For $(i,u)\in P_+$,
\begin{align}
\label{eq:xi}
\begin{split}
\textstyle
\tilde{x}_i(u-\frac{tg_i}{2})\tilde{x}_i(u+\frac{tg_i}{2})
&=
\frac{y_i(u)}{1+y_i(u)}
\prod_{(j,v)\in\tilde{P}_+} \tilde{x}_j(v)^{H_+(j,v;i,u)}\\
&\quad
+
\frac{1}{1+y_i(u)}
\prod_{(j,v)\in \tilde{P}_+} \tilde{x}_j(v)^{H_-(j,v;i,u)},
\end{split}\\
\label{eq:tH}
H_{\pm}(j,v;i,u)&=
\begin{cases}
\pm b_{ji}(u)&
u\in (v- \frac{tg_j}{2},v+\frac{tg_j}{2}),
 b_{ji}(u)\gtrless 0\\
0 & \mbox{otherwise}.
\end{cases}
\end{align}
We call the system of relations \eqref{eq:xi}
 the
{\em T-system (with coefficients)
associated with a slice 
$\mathbf{i}=\mathbf{i}(0)\,|\, 
\cdots \,|\, \mathbf{i}(t-1)$
 of a regular $\nu$-period of $B$}.

Let $\mathcal{A}(B,x)$ be the cluster algebra with
trivial coefficients with initial seed $(B,x)$.
Namely, we set every coefficient to be $1$
in the trivial semifield $\mathbf{1}=\{1\}$.
Let $\pi_{\mathbf{1}}:\mathbb{P}_{\mathrm{univ}}(y)
\rightarrow \mathbf{1}$ be the projection.
Let $[x_i(u)]_{\mathbf{1}}$ be the image
of $x_i(u)$ by the algebra homomorphism
$\mathcal{A}(B,x,y)\rightarrow \mathcal{A}(B,x)$
induced from $\pi_{\mathbf{1}}$.
By the specialization of \eqref{eq:xi},
we have
\begin{align}
\label{eq:ti}
\textstyle
[\tilde{x}_i(u-\frac{tg_i}{2})]_{\mathbf{1}}
[\tilde{x}_i(u+\frac{tg_i}{2})]_{\mathbf{1}}
&=
\prod_{(j,v)\in\tilde{P}_+}
 [\tilde{x}_j(v)]_{\mathbf{1}}^{H_+(j,v;i,u)}
+
\prod_{(j,v)\in \tilde{P}_+}
 [\tilde{x}_j(v)]_{\mathbf{1}}^{H_-(j,v;i,u)}.
\end{align}
We call the system of relations \eqref{eq:ti}
 the
{\em T-system (without coefficients)
associated with a slice
$\mathbf{i}=\mathbf{i}(0)\,|\, 
\cdots \,|\, \allowbreak\mathbf{i}(t-1)$
 of a regular $\nu$-period of $B$}.

Needless to say, the unbalanced
form \eqref{eq:yi'} and \eqref{eq:xi'}
may be also useful in some situation.

\begin{Remark}
\label{rem:choice}
For a given regular $\nu$-period $\mathbf{i}$,
different choices of slices
of $\mathbf{i}$ give different Y-systems/T-systems.
But they are easily identified by `change of variables',
so the choice of a slice is not essential.
(See Proposition \ref{prop:choice} for a more precise statement.)
However, in view of the sequence \eqref{eq:seedmutseq},
it is economical, and often natural, to use
a slice {\em whose length $t$ is minimal\/} among all the other slices,
as in Example \ref{ex:slice}.
\end{Remark}

One may think that
the Y- and T-systems are
the both sides of the coin in the sense that
they directly determine each other as follows.

\begin{Proposition}[Duality]
\label{prop:TY1}
Let $D=\mathrm{diag} (d_i)_{i\in I}$ be
a diagonal matrix such that ${}^{t}(DB)=-DB$.
Let $G_{\pm}(j,v;i,u)$ and 
$H_{\pm}(j,v;i,u)$ the ones
in \eqref{eq:G}
and \eqref{eq:tH}, respectively.
Then, the following relation holds
for any $(j,v)\in P_+$ and $(i,u)\in \tilde{P}_+$.
\begin{align}
\label{eq:GH1}
d_j G_{\pm} (j,v;i,u)= d_i H_{\pm} (i,u;j,v).
\end{align}
In particular, if $B$ is skew symmetric, we have
\begin{align}
\label{eq:GH2}
 G_{\pm} (j,v;i,u)= H_{\pm} (i,u;j,v).
\end{align}
\end{Proposition}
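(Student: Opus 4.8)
The plan is to trace the definitions of $G_{\pm}$ and $H_{\pm}$ back to the entries $b_{ji}(v)$ of the exchange matrices along the sequence \eqref{eq:Bmutseq1}, and then to exploit the fact that skew symmetrizability is preserved by mutation with a \emph{fixed} symmetrizing matrix $D$. First I would recall that each $B(v)$ in \eqref{eq:Bmutseq1} is obtained from $B(0)=B$ by composite mutations, and that by \eqref{eq:Bmut} a common symmetrizer persists: if ${}^{t}(DB)=-DB$ then ${}^{t}(DB(v))=-DB(v)$ for every $v$, i.e. $d_j b_{ji}(v) = -d_i b_{ij}(v)$ for all $i,j\in I$ and all $v$. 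This is the only structural input needed; everything else is bookkeeping about which index is mutated when.

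Next I would unwind \eqref{eq:G} and \eqref{eq:tH}. Fix $(j,v)\in P_+$ and $(i,u)\in\tilde P_+$. Observe that the interval condition ``$v\in(u-\tfrac{tg_i}{2},u+\tfrac{tg_i}{2})$'' in \eqref{eq:G} and the condition ``$u\in(v-\tfrac{tg_j}{2},v+\tfrac{tg_j}{2})$'' in \eqref{eq:tH} are \emph{the same} inequality $|u-v|<\tfrac{t}{2}\min(g_i,g_j)$ once one notes that the relevant comparison only ever involves $v$ strictly between two consecutive forward-mutation times of $i$ (resp. of $j$), and that between such times the entry $b_{ji}(v)$ does not change because neither $i$ nor $j$ is mutated there. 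So on the overlap the two defining cases activate simultaneously, and off it both $G_{\pm}$ and $H_{\pm}$ vanish, making \eqref{eq:GH1} trivially $0=0$. On the overlap, $G_{\pm}(j,v;i,u)=\mp b_{ji}(v)$ and $H_{\pm}(i,u;j,v)=\pm b_{ij}(u)$, but since no mutation of $i$ or $j$ occurs strictly between $u$ and $v$ we have $b_{ij}(u)=b_{ij}(v)$; combining with the persistent symmetrizer relation $d_j b_{ji}(v) = -d_i b_{ij}(v)$ yields $d_j G_{\pm}(j,v;i,u)= \mp d_j b_{ji}(v) = \pm d_i b_{ij}(v) = \pm d_i b_{ij}(u) = d_i H_{\pm}(i,u;j,v)$, which is \eqref{eq:GH1}. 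The skew symmetric case \eqref{eq:GH2} is then immediate by taking $D=I$.

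The step I expect to be the main obstacle is the careful verification that the two interval conditions coincide and that $b_{ji}$ is genuinely constant on the relevant interval — in other words, checking that between consecutive forward-mutation points of $i$ the index $j$ is mutated at most in a way that leaves $b_{ji}$ unchanged, and vice versa. This requires unpacking the definition of $P_+$ via the slice $\mathbf{i}=\mathbf{i}(0)\,|\,\cdots\,|\,\mathbf{i}(t-1)$ and using Lemma \ref{lem:order}(b) together with regularity (A2) to control when and how $b_{ji}(v)$ can jump. I would organize this as a short lemma: ``for $(i,u)\in\tilde P_+$ and $v$ in the open interval between the two adjacent forward-mutation times of $i$ surrounding $u$, one has $b_{ji}(v)=b_{ji}(u\pm\tfrac{tg_i}{2})$,'' proved by noting that the only mutations occurring strictly inside that interval are at indices $k\neq i$, and the sign rule in \eqref{eq:Bmut} shows $b_{ji}$ changes only when $i$ or $j$ is the mutation index. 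Once this lemma is in hand, the identity \eqref{eq:GH1} reduces to the one-line symmetrizer computation above.
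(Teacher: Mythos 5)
Your overall strategy---reduce everything to the entries $b_{ji}(v)$ of the matrices along \eqref{eq:Bmutseq1} and use the fact that the symmetrizer $D$ persists under mutation---is exactly the paper's, and that persistence is indeed the only structural input. However, you have mis-substituted the arguments in $H_{\pm}(i,u;j,v)$, and the two auxiliary claims you introduce to compensate are both false. In \eqref{eq:tH} the matrix entry is always evaluated at the time of the \emph{second} argument pair, and the interval is centered at the time of the \emph{first} pair with half-width $\frac{tg}{2}$ of the \emph{first} index; hence
\begin{align*}
H_{\pm}(i,u;j,v)=\pm b_{ij}(v)\quad\mbox{if}\quad v\in\left(u-\frac{tg_i}{2},\,u+\frac{tg_i}{2}\right),\ b_{ij}(v)\gtrless 0,
\end{align*}
and $0$ otherwise. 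This has \emph{literally the same} interval condition as $G_{\pm}(j,v;i,u)$ in \eqref{eq:G}, and it evaluates the matrix at the same time $v$ (an integer, since $(j,v)\in P_+$; note that your expression $b_{ij}(u)$ need not even be defined, because $u$ with $(i,u)\in\tilde{P}_+$ may be a half-integer). With the correct substitution, \eqref{eq:GH1} is the one-line computation $d_jG_{\pm}(j,v;i,u)=\mp d_jb_{ji}(v)=\pm d_ib_{ij}(v)=d_iH_{\pm}(i,u;j,v)$, using only $d_jb_{ji}(v)=-d_ib_{ij}(v)$ for all $v$; no interval comparison and no constancy lemma are needed.

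The two patches you propose would not survive scrutiny. First, the conditions $|u-v|<\frac{tg_i}{2}$ and $|u-v|<\frac{tg_j}{2}$ are genuinely different whenever $g_i\neq g_j$, which occurs in the paper's own examples (e.g.\ the $(B_4,4)$ case in Section \ref{subsec:exTY}, where $g_{(4,j)}=1$ but $g_{(i,j)}=2$ for $i\neq 4$), and nothing forces $b_{ji}(v)$ to vanish on the discrepancy between the two intervals. Second, the asserted lemma that $b_{ji}$ changes only when $i$ or $j$ is the mutation index contradicts \eqref{eq:Bmut}: for $k\neq i,j$ one has $b''_{ij}=b'_{ij}+\frac{1}{2}(|b'_{ik}|b'_{kj}+b'_{ik}|b'_{kj}|)$, which is nonzero whenever $b'_{ik}$ and $b'_{kj}$ are nonzero of the same sign, so $b_{ij}$ is in general not constant between consecutive forward mutation points of $i$. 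Fortunately, neither claim is needed once the definitions are unwound correctly, and your argument then collapses to the paper's proof.
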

\begin{proof}
We recall that any matrix $B'$ obtained from $B$
by mutation shares the same diagonalizing matrix $D$
with $B$ \cite[Proposition 4.5]{Fomin02}.
Then, by comparing \eqref{eq:G} and \eqref{eq:tH},
we immediately obtain the claim.
\end{proof}

\subsection{Examples}
\label{subsec:exTY}

Let us write down the Y- and T-systems explicitly
for the ones in Example \ref{ex:affine}.

(a) $(X,\ell)=(A_4,4)$.
 Let $Q$, $\mathbf{i}_+$, and $\nu$ be the one
therein.
Then, $\mathbf{i}_+$ is a regular $\nu$-period of $Q$.
We regard $\mathbf{i}_+$ as a slice of itself of length 1,
and consider the associated Y- and T-systems.
We use the index set $I=\{1,2,3,4\}\times \{1,2,3\}$
such that $(i,j)\in I$ corresponds to the vertex
at the $i$th column (from the left) and the $j$th row
(from the bottom).
Thus, we have the data $t=1$, $g=2$,
and $g_{(i,j)} =2$ for any $(i,j)\in I$.
The condition for the forward mutation points is
given by
\begin{align}
((i,j),u)\in P_+
\ &\Longleftrightarrow \
\mbox{$i+j+u$ is even}.
\end{align}
By writing $y_{(i,j)}(u)$ and
$\tilde{x}_{(i,j)}(u)$ as $y_{i,j}(u)$ and $\tilde{x}_{i,j}(u)$, 
the resulting Y-system and T-system (without coefficients) are as follows.

{\em Y-system:} For $((i,j),u)\in \tilde{P}_+$,
\begin{align}
\label{eq:YA}
y_{i,j}(u-1)y_{i,j}(u+1)
&= \frac{
(1+y_{i-1,j}(u))(1+y_{i+1,j}(u))
}
{
(1+y_{i,j-1}(u)^{-1})(1+y_{i,j+1}(u)^{-1})
},
\end{align}
where $y_{0,j}(u)=y_{5,j}(u)=0$
and $y_{i,0}(u)^{-1}=y_{i,4}(u)^{-1}=0$ in the right hand side.

{\em T-system:} For $((i,j),u)\in P_+$,
\begin{align}
\label{eq:TA}
[\tilde{x}_{i,j}(u-1)]_{\mathbf{1}}[\tilde{x}_{i,j}(u+1)]_{\mathbf{1}}
&= 
[\tilde{x}_{i-1,j}(u)]_{\mathbf{1}}[\tilde{x}_{i+1,j}(u)]_{\mathbf{1}}
+
[\tilde{x}_{i,j-1}(u)]_{\mathbf{1}}[\tilde{x}_{i,j+1}(u)]_{\mathbf{1}}
,
\end{align}
where $[\tilde{x}_{0,j}(u)]_{\mathbf{1}}
=[\tilde{x}_{5,j}(u)]_{\mathbf{1}}=
[\tilde{x}_{i,0}(u)]_{\mathbf{1}}
=[\tilde{x}_{i,4}(u)]_{\mathbf{1}}=1$ in the right hand side.

These are the Y- and T-systems associated
with the quantum affine algebras of type $A_4$ with level 4.
The relation \eqref{eq:TA} is also a special case of Hirota's
bilinear difference equation \cite{Hirota77},
and it is one of the most studied difference equations in various
view points.
See \cite{Inoue10c} for more information.

(b) $(X,\ell)=(B_4,4)$.
 Let $Q$, $\mathbf{i}^{\bullet}_+$, $\mathbf{i}^{\bullet}_-$,
$\mathbf{i}^{\circ}_+$,
$\mathbf{i}^{\circ}_-$, and $\nu$ be the one
therein.
Then, $(\mathbf{i}^{\bullet}_+\, | \,
\mathbf{i}^{\circ}_+\,) | \,
\mathbf{i}^{\bullet}_-$ is a regular $\nu$-period of $Q$.
We regard it as a slice of itself of length 2,
and consider the associated Y- and T-systems.
We use the index set $I$ which is the disjoint
union of $\{1,2,3,5,6,7\}\times \{1,2,3\}$
and $\{4\} \times \{1,\dots,7\}$ such that
$(i,j)\in I$ corresponds to the vertex
at the $i$th column (from the left) and the $j$th row
(from the bottom).
Thus, we have the data $t=2$, $g=2$,
and $g_{(i,j)} =2$ for $i\neq 4$ and $g_{(4,j)} =1$.
The condition for the forward mutation points is
given by
\begin{align}
((i,j),u)\in P_+
\ &\Longleftrightarrow \
\begin{cases}
(i,j)\in \mathbf{i}^{\bullet}_+ 
\sqcup \mathbf{i}^{\circ}_+
& u \equiv 0 \ (4)\\
(i,j)\in \mathbf{i}^{\bullet}_- 
& u \equiv 1,3 \ (4)\\
(i,j)\in \mathbf{i}^{\bullet}_+ 
\sqcup \mathbf{i}^{\circ}_-
& u \equiv 2 \ (4).\\
\end{cases}
\end{align}
The resulting Y-system and T-system (without coefficients) are as follows,
where the `boundary terms'
in the right hand should be ignored as before.

{\em Y-system:}  For $((i,j),u)\in \tilde{P}_+$ with
$i=1,2,6,7$, 
\begin{align}
y_{i,j}(u-2)y_{i,j}(u+2)
&= \frac{
(1+y_{i-1,j}(u))(1+y_{i+1,j}(u))
}
{
(1+y_{i,j-1}(u)^{-1})(1+y_{i,j+1}(u)^{-1})
},
\end{align}
and, with $i=3,4,5$,
\begin{align}
\begin{split}
y_{3,j}(u-2)y_{3,j}(u+2)
&= 
\frac
{
\genfrac{}{}{0pt}{}{
\displaystyle
(1+y_{2,j}(u))(1+y_{4,2j-1}(u))(1+y_{4,2j+1}(u))
}
{
\displaystyle
\times (1+y_{4,2j}(u-1))(1+y_{4,2j}(u+1))
}
}
{
(1+y_{3,j-1}(u)^{-1})(1+y_{3,j+1}(u)^{-1})
},\\
y_{4,2j}(u-1)y_{4,2j}(u+1)
&=
\begin{cases}
\dfrac
{
1+y_{3,j}(u)
}
{
(1+y_{4,2j-1}(u)^{-1})(1+y_{4,2j+1}(u)^{-1})
}& u+2j\equiv 0\ (4)
\\
\dfrac
{
1+y_{5,j}(u)
}
{
(1+y_{4,2j-1}(u)^{-1})(1+y_{4,2j+1}(u)^{-1})
}&u+2j\equiv 2\ (4),\\
\end{cases}
\\
y_{4,2j+1}(u-1)y_{4,2j+1}(u+1)
&= 
\frac
{
1
}
{
(1+y_{4,2j}(u)^{-1})(1+y_{4,2j+2}(u)^{-1})
},\\
y_{5,j}(u-2)y_{5,j}(u+2)
&= 
\frac
{
\genfrac{}{}{0pt}{}{
\displaystyle
(1+y_{6,j}(u))(1+y_{4,2j-1}(u))(1+y_{4,2j+1}(u))
}
{
\displaystyle
\times (1+y_{4,2j}(u-1))(1+y_{4,2j}(u+1))
}
}
{
(1+y_{5,j-1}(u)^{-1})(1+y_{5,j+1}(u)^{-1})
}.
\end{split}
\end{align}

{\em T-system:} For $((i,j),u)\in P_+$ with $i=1,2,6,7$,
\begin{align}
[\tilde{x}_{i,j}(u-2)]_{\mathbf{1}}[\tilde{x}_{i,j}(u+2)]_{\mathbf{1}}
&= 
[\tilde{x}_{i-1,j}(u)]_{\mathbf{1}}[\tilde{x}_{i+1,j}(u)]_{\mathbf{1}}
+
[\tilde{x}_{i,j-1}(u)]_{\mathbf{1}}[\tilde{x}_{i,j+1}(u)]_{\mathbf{1}}
,
\end{align}
and, with $i=3,4,5$,
\begin{align}
\begin{split}
&[\tilde{x}_{3,j}(u-2)]_{\mathbf{1}}[\tilde{x}_{3,j}(u+2)]_{\mathbf{1}}
= 
[\tilde{x}_{2,j}(u)]_{\mathbf{1}}[\tilde{x}_{4,2j}(u)]_{\mathbf{1}}
+
[\tilde{x}_{3,j-1}(u)]_{\mathbf{1}}[\tilde{x}_{3,j+1}(u)]_{\mathbf{1}}
,\\
&[\tilde{x}_{4,2j}(u-1)]_{\mathbf{1}}[\tilde{x}_{4,2j}(u+1)]_{\mathbf{1}}\\
&
= 
\begin{cases}
[\tilde{x}_{5,j}(u-1)]_{\mathbf{1}}[\tilde{x}_{3,j}(u+1)]_{\mathbf{1}}
+
[\tilde{x}_{4,2j-1}(u)]_{\mathbf{1}}[\tilde{x}_{4,2j+1}(u)]_{\mathbf{1}}
&u+2j\equiv 1\ (4)\\
[\tilde{x}_{3,j}(u-1)]_{\mathbf{1}}[\tilde{x}_{5,j}(u+1)]_{\mathbf{1}}
+
[\tilde{x}_{4,2j-1}(u)]_{\mathbf{1}}[\tilde{x}_{4,2j+1}(u)]_{\mathbf{1}}
&u+2j\equiv 3\ (4),\\
\end{cases}
\\
& [\tilde{x}_{4,2j+1}(u-1)]_{\mathbf{1}}[\tilde{x}_{4,2j+1}(u+1)]_{\mathbf{1}}
\\
&= 
 \begin{cases}
[\tilde{x}_{5,j}(u)]_{\mathbf{1}}[\tilde{x}_{3,j+1}(u)]_{\mathbf{1}}
+
[\tilde{x}_{4,2j}(u)]_{\mathbf{1}}[\tilde{x}_{4,2j+2}(u)]_{\mathbf{1}}
&u+2j\equiv 0\ (4)\\
[\tilde{x}_{3,j}(u)]_{\mathbf{1}}[\tilde{x}_{5,j+1}(u)]_{\mathbf{1}}
+
[\tilde{x}_{4,2j}(u)]_{\mathbf{1}}[\tilde{x}_{4,2j+2}(u)]_{\mathbf{1}}
&u+2j\equiv 2\ (4),\\
\end{cases}
\\
&[\tilde{x}_{5,j}(u-2)]_{\mathbf{1}}[\tilde{x}_{5,j}(u+2)]_{\mathbf{1}}
= 
[\tilde{x}_{6,j}(u)]_{\mathbf{1}}[\tilde{x}_{4,2j}(u)]_{\mathbf{1}}
+
[\tilde{x}_{5,j-1}(u)]_{\mathbf{1}}[\tilde{x}_{5,j+1}(u)]_{\mathbf{1}}
.
\end{split}
\end{align}

Under a suitable identification of variables,
they coincide with the Y- and T-systems associated
with the quantum affine algebras of type $B_4$ with level 4.
See \cite{Inoue10a} for more information.

\subsection{Standalone versions of T- and Y-systems}
\label{subsec:stand}

Here we establish certain formal property
concerning T- and Y-systems.

We just
introduced the T-systems (without coefficient, for simplicity)
\eqref{eq:ti} and Y-systems \eqref{eq:yi}
as relations inside a cluster algebra $\mathcal{A}(B,x)$
and a coefficient group $\mathcal{G}(B,y)$.
On the other hand, usually
these relations appear 
without their `ambient' cluster algebras or coefficient groups.
Therefore, to apply the cluster algebraic machinery
of \cite{Fomin07}, which has been proved to be so efficient and powerful,
it is necessary to establish 
a precise connection between such
`standalone' T- and Y-systems and
the `built-in' T- and Y-systems inside cluster algebras.
In fact, this has been repeatedly established
case-by-case for each explicit example
(e.g., \cite{Inoue10c,Kuniba09,Inoue10a,Nakanishi10a,Nakanishi10b}).
Here we do it again, and hopeful for the last time, in a general setting.

We continue to use the same notations as in Section \ref{subsec:TY1}.

First, we introduce the ring/group
associated with the built-in T-system/Y-system in
 cluster algebra/coefficient group.
\begin{Definition}
\label{def:BI}
(1)
 The {\em T-subalgebra $\mathcal{A}_{\mathbf{i}}(B,x)$
of $\mathcal{A}(B,x)$ associated with a slice
$\mathbf{i}=\mathbf{i}(0)\,|\, 
\cdots \,|\, \mathbf{i}(t-1)$
of a regular $\nu$-period of $B$}
is the  subring of $\mathcal{A}(B,x)$
generated by $[x_i(u)]_{\mathbf{1}}$
 ($(i,u)\in P_+$),
or equivalently, generated by
$[\tilde{x}_i(u)]_{\mathbf{1}}$ ($(i,u\in \tilde{P}_+$).
\par
(2) The {\em Y-subgroup $\mathcal{G}_{\mathbf{i}}(B,y)$
of $\mathcal{G}(B,y)$ associated with a 
slice
$\mathbf{i}=\mathbf{i}(0)\,|\, 
\cdots \,|\, \mathbf{i}(t-1)$
 of a regular $\nu$-period of $B$}
is the multiplicative subgroup of $\mathcal{G}(B,y)$
generated by
 $y_i(u)$ and $1+y_i(u)$ ($(i,u)\in P_+$).
\end{Definition}

The following fact was casually mentioned in Remark
\ref{rem:choice}.

\begin{Proposition}
\label{prop:choice}
The algebra $\mathcal{A}_{\mathbf{i}}(B,x)$
and the group $\mathcal{G}_{\mathbf{i}}(B,y)$
depend only on $\mathbf{i}$
and  do not depend on
the choice of a slice of $\mathbf{i}$.
\end{Proposition}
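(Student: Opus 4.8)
The plan is to compare an arbitrary slice of $\mathbf{i}$ with the \emph{maximal slice} $\mathbf{i}=(i_1)\,|\,(i_2)\,|\,\cdots\,|\,(i_r)$ into singletons, which is canonically attached to $\mathbf{i}$, and to show that both produce literally the same set of generating cluster variables $x_i(u)$ (as elements of $\mathcal{A}(B,x,y)$) and the same set of generating coefficients $y_i(u)$ and $1+y_i(u)$ (as elements of $\mathbb{P}_{\mathrm{univ}}(y)$), the index ranging over the respective sets $P_+$. Applying the specialization $\pi_{\mathbf{1}}$ to the cluster variables then gives equality of $\mathcal{A}_{\mathbf{i}}(B,x)$ and of $\mathcal{G}_{\mathbf{i}}(B,y)$ with the structures built from the maximal slice, which proves the proposition. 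First I would check that the maximal slice is indeed a slice, which is immediate since condition \eqref{eq:compat} is vacuous for singleton blocks, and that it is a refinement of any slice $\mathbf{i}=\mathbf{i}(0)\,|\,\cdots\,|\,\mathbf{i}(t-1)$, which holds simply because the blocks of any slice are consecutive subwords of the fixed sequence $\mathbf{i}$.

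Now fix a slice $\mathbf{i}=\mathbf{i}(0)\,|\,\cdots\,|\,\mathbf{i}(t-1)$ and a block $\mathbf{i}(p)=(a_1,\dots,a_s)$, and consider the segment of the mutation sequence \eqref{eq:seedmutseq} joining the block seeds $B(p)$ and $B(p+1)=\mu_{\mathbf{i}(p)}(B(p))$. For this slice the forward mutation points lying on the segment contribute the cluster variables $x_{a_1},\dots,x_{a_s}$ and the coefficients $y_{a_k},1+y_{a_k}$ ($k=1,\dots,s$), all evaluated at the seed $B(p)$. For the maximal slice the same passage $B(p)\to B(p+1)$ is realized as the chain of single mutations $\mu_{a_1},\dots,\mu_{a_s}$, and its forward mutation points contribute $x_{a_k}$ and $y_{a_k},1+y_{a_k}$ evaluated at the intermediate seed $\mu_{a_{k-1}}\cdots\mu_{a_1}(B(p))$. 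But \eqref{eq:clust} shows that $\mu_{a_j}$ leaves $x_i$ unchanged for every $i\neq a_j$, while \eqref{eq:coef} shows that it leaves $y_i$, hence $1+y_i$, unchanged whenever the current entry $b_{a_j i}$ vanishes; and for $i=a_k$ with $k\neq j$ this entry does vanish at the relevant intermediate seed, because the same computation that underlies Lemma \ref{lem:order}(b) — applied to \eqref{eq:Bmut} — shows that mutating inside the block $\mathbf{i}(p)$ preserves the vanishing of all $b_{a_c a_d}$ at every step, whether or not $a_c,a_d$ have already been mutated. Hence $x_{a_k}$, $y_{a_k}$, and $1+y_{a_k}$ at the intermediate seed agree with their values at $B(p)$, so the two slices contribute exactly the same ring and group elements from this segment. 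Summing over the blocks $p=0,\dots,t-1$ and over their $\nu$-translates in the bi-infinite sequence \eqref{eq:Bmutseq1} — which is consistent with refinement, since refining $\mathbf{i}(p)$ and then applying $\nu^m$ is the singleton refinement of $\nu^m(\mathbf{i}(p))$ — gives the required coincidence of generating sets. If one also allows the entries inside a block to be reordered, as in the examples after Remark \ref{rem:choice}, Lemma \ref{lem:order}(a) replaces each partial composite mutation by a $\sim_B$-equivalent one, so the block seeds $B(p)$ and the values of the generators at $B(p)$ are unchanged and the same conclusion holds.

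The conceptual input is thus confined to Lemma \ref{lem:order} and the explicit exchange relations \eqref{eq:coef} and \eqref{eq:clust}, so I expect the main obstacle to be organizational rather than substantive: one has to keep careful track of how the time coordinate $u$, and with it the index sets $P_+$ and $\tilde{P}_+$, are relabelled when a slice is refined (the maximal slice has length $r$ rather than $t$, so the block seed $B(p)$ reappears at a shifted time), and, more importantly, one has to phrase the final step as an honest equality of subsets of the single cluster algebra $\mathcal{A}(B,x,y)$ and of $\mathbb{P}_{\mathrm{univ}}(y)$ attached to the common initial seed $(B,x,y)$, rather than as an abstract isomorphism between objects associated with the two slices. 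Once this bookkeeping is in place, the verification is the short computation indicated above.
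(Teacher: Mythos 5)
Your proposal is correct and is essentially the paper's own argument, only spelled out in more detail: the paper directly sets up the natural bijection of forward mutation points between two arbitrary slices and deduces $x_i(u)=x_{i'}(u')$, $y_i(u)=y_{i'}(u')$ from condition \eqref{eq:compat} and Lemma \ref{lem:order}, whereas you route the comparison through the common singleton refinement, which amounts to the same verification. The key points you check — that $x_{a_k}$ is untouched by mutations at other block indices and that $y_{a_k}$ is untouched because the relevant matrix entries stay zero throughout the block — are exactly the content the paper compresses into its one-line appeal to \eqref{eq:compat} and Lemma \ref{lem:order}.
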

\begin{proof}
Suppose we take two different slices of
$\mathbf{i}$.
Then, there is a natural bijection
between the forward mutation points $(i,u) \leftrightarrow
(i',u')$ for two choices.
Then, we have $y_i(u)= y_{i'}(u')$ and
$x_i(u)= x_{i'}(u')$ in
$\mathcal{A}(B,x,y)$,
thanks to the condition \eqref{eq:compat} and
Lemma \ref{lem:order}.
\end{proof}

Next, we introduce the corresponding ring/group
for
 the standalone T-system/Y-system.

\begin{Definition}
\label{def:SA}
(1) Let $\EuScript{T}(B,\mathbf{i})$ be the commutative ring over
$\mathbb{Z}$ with identity element, with
generators 
 $T_i(u)^{\pm 1}$ ($(i,u)\in \tilde{P}_+$)
and relations \eqref{eq:ti},
where $[\tilde{x}_i(u)]_{\mathbf{1}}$ is replaced with $T_i(u)$,
together with $T_i(u) T_i(u)^{-1}=1$.
Let $\EuScript{T}^{\circ}(B,\mathbf{i})$ be the 
subring of $\EuScript{T}(B,\mathbf{i})$
generated by $T_i(u)$ ($(i,u)\in \tilde{P}_+$).
\par
(2) Let $\EuScript{Y}(B,\mathbf{i})$ be the semifield
with generators $Y_i(u)$ ($(i,u)\in P_+$)
and relations \eqref{eq:yi},
where $y_i(u)$ is replaced with $Y_i(u)$.
Let $\EuScript{Y}^{\circ}(B,\mathbf{i})$ be the multiplicative
subgroup of $\EuScript{Y}(B,\mathbf{i})$
generated by $Y_i(u)$ and $1+Y_i(u)$ ($(i,u)\in P_+$).
\end{Definition}

Two rings/groups defined above are isomorphic.

\begin{Theorem}
\label{thm:iso}
(1) The ring $\EuScript{T}^{\circ}(B,\mathbf{i})$ is isomorphic
to $\mathcal{A}_{\mathbf{i}}(B,x)$ by the correspondence
$T_i(u)\mapsto [\tilde{x}_i(u)]_{\mathbf{1}}$.
\par
(2) The group $\EuScript{Y}^{\circ}(B,\mathbf{i})$ is isomorphic
to $\mathcal{G}_{\mathbf{i}}(B,y)$ by the correspondence
$Y_i(u)\mapsto y_i(u)$ and 
$1+Y_i(u)\mapsto 1+y_i(u)$.
\end{Theorem}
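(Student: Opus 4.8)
The plan is to construct both homomorphisms in the evident direction and reduce the statement to an injectivity claim, which I would then settle by exhibiting explicit left inverses; the only substantial input will be the recursive (``triangular'') shape of the relations \eqref{eq:ti} and \eqref{eq:yi} along the mutation sequence \eqref{eq:seedmutseq}, which is forced by regularity of the period. First, $T_i(u)\mapsto[\tilde{x}_i(u)]_{\mathbf{1}}$ defines a ring homomorphism $\psi\colon\EuScript{T}(B,\mathbf{i})\to\mathbb{Q}(x_1,\dots,x_n)$, where $n=|I|$, and $Y_i(u)\mapsto y_i(u)$ a semifield homomorphism $\chi\colon\EuScript{Y}(B,\mathbf{i})\to\mathbb{P}_{\mathrm{univ}}(y)$; these are well defined because, term for term, the defining relations \eqref{eq:ti} of $\EuScript{T}(B,\mathbf{i})$ (resp.\ \eqref{eq:yi} of $\EuScript{Y}(B,\mathbf{i})$) are the images, under the trivial (resp.\ universal) specialization, of the exchange relations \eqref{eq:clust} (resp.\ \eqref{eq:coef}) read along \eqref{eq:seedmutseq}, hence hold in the target. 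By Definitions \ref{def:BI} and \ref{def:SA}, $\psi$ restricts to a surjection $\psi^{\circ}\colon\EuScript{T}^{\circ}(B,\mathbf{i})\to\mathcal{A}_{\mathbf{i}}(B,x)$ and $\chi$ to a surjection $\chi^{\circ}\colon\EuScript{Y}^{\circ}(B,\mathbf{i})\to\mathcal{G}_{\mathbf{i}}(B,y)$ (note that $\chi(1+Y_i(u))=1+y_i(u)$ since $\chi$ respects $\oplus$). Since $\EuScript{T}^{\circ}$ and $\EuScript{Y}^{\circ}$ are sub-objects, it then suffices to prove that $\psi$ and $\chi$ are injective.

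For injectivity I would exploit that $\mathbf{i}$ is a \emph{regular} $\nu$-period. Condition (A2) of Definition \ref{def:regular} forces the forward mutation points carrying a fixed label $i$ to occupy a single residue class modulo $tg_i$, so the $i$-th cluster variable (resp.\ coefficient) stays constant between consecutive such points, while (A1) guarantees --- as already used in Section \ref{subsec:TY1} --- that the data indexed by $P_{+}$ account for everything occurring along \eqref{eq:seedmutseq}. Ordering the generators by the step of \eqref{eq:seedmutseq} at which they are created, the relation \eqref{eq:ti} at a forward mutation point solves $\tilde{x}_i(u+tg_i/2)$ as a Laurent expression in $\tilde{x}_i(u-tg_i/2)$ and the $\tilde{x}_j(v)$ appearing in it, all of strictly smaller creation step; by involutivity of the composite mutations $\mu_{\mathbf{i}(p)}$ (Lemma \ref{lem:order}) the same works in the backward direction, and likewise \eqref{eq:yi} for the coefficients. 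Iterating this elimination, each generator $T_i(u)$ equals, inside $\EuScript{T}(B,\mathbf{i})$, a definite Laurent polynomial $L_{i,u}$ in the finitely many \emph{fundamental} generators $T_j(v_j)$ ($j\in I$), chosen with $\tilde{x}_j(v_j)=x_j$ (such $v_j$ exists because $x_j=x_j(0)$ lies in the cluster of a seed occurring in \eqref{eq:seedmutseq}); since each relation \eqref{eq:ti} is consumed exactly once, $L_{i,u}$ is precisely the Laurent polynomial expressing the cluster variable $[\tilde{x}_i(u)]_{\mathbf{1}}\in\mathbb{Z}[x_1^{\pm1},\dots,x_n^{\pm1}]$ over the initial cluster (Laurent phenomenon, \cite{Fomin07}); likewise every $Y_i(u)$ becomes a definite subtraction-free rational function of the fundamental $Y_j(v_j)$.

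Granting this, I would finish by producing left inverses. Since the $T_j(v_j)$ are units in $\EuScript{T}(B,\mathbf{i})$, the assignment $x_j\mapsto T_j(v_j)$ extends to a homomorphism $\mathbb{Z}[x_1^{\pm1},\dots,x_n^{\pm1}]\to\EuScript{T}(B,\mathbf{i})$ sending $[\tilde{x}_i(u)]_{\mathbf{1}}$ to $L_{i,u}(T_\bullet)=T_i(u)$, in particular to a unit; restricting to $\mathcal{A}(B,x)$ and localizing at the $[\tilde{x}_i(u)]_{\mathbf{1}}$, it descends to a homomorphism $\Sigma$ with $\Sigma\circ\psi=\mathrm{id}$ on generators, hence $\Sigma\circ\psi=\mathrm{id}$; so $\psi$, and with it $\psi^{\circ}$, is injective, giving (1). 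For (2) the universal property of $\mathbb{P}_{\mathrm{univ}}(y)$ supplies a semifield homomorphism $\Theta\colon\mathbb{P}_{\mathrm{univ}}(y)\to\EuScript{Y}(B,\mathbf{i})$ with $y_j\mapsto Y_j(v_j)$, and again $\Theta(y_i(u))=Y_i(u)$, so $\Theta\circ\chi=\mathrm{id}$ and $\chi$, hence $\chi^{\circ}$, is injective.

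The hard part is the middle step: extracting from Definition \ref{def:regular} and Lemma \ref{lem:order} that the forward mutation points along \eqref{eq:seedmutseq} form a directed acyclic pattern with respect to creation step, so that the elimination dictated by \eqref{eq:ti} and \eqref{eq:yi} is simultaneously exhaustive (every generator is reached) and consistent (the $L_{i,u}$ do not depend on the order of elimination). Once this combinatorial bookkeeping is settled, the remaining ingredients --- algebraic independence of an initial cluster and the universal/free nature of $\mathbb{P}_{\mathrm{univ}}(y)$ --- are routine.
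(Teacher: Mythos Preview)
Your proposal is correct and follows essentially the same approach as the paper: define the forward map on generators, then construct its inverse by sending the initial variables $x_j$ (resp.\ $y_j$) to the fundamental generators $T_j(v_j)$ (resp.\ $Y_j(v_j)$) and propagating along the mutation sequence by induction, using that both sides satisfy the same T-/Y-system. The paper phrases this directly as ``construct the inverse'' rather than ``prove injectivity via a left inverse'', and your worry about order-independence of the elimination is not actually an issue (the equalities $T_i(u)=L_{i,u}(T_\bullet)$ hold in $\EuScript{T}(B,\mathbf{i})$ as a consequence of the defining relations, so there is nothing to check), but the substance is the same.
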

\begin{proof}
(1)
The map $\rho :T_i(u)\mapsto [\tilde{x}_i(u)]_{\mathbf{1}}$
is a ring homomorphism by definition.
We can construct the inverse of $\rho$ as follows.
For each $i\in I$, let $u_i\in \mathbb{Z}$ be the smallest
nonnegative $u_i$ such that $(i,u_i)\in P_+$.
We define a ring homomorphism
$\phi:\mathbb{Z}[x_i^{\pm1}]_{i\in I}
\rightarrow \EuScript{T}(B,\mathbf{i})$ by
$x_i^{\pm1}\mapsto T_i(u_i-\frac{tg_i}{2})^{\pm1}$.
Thus, we have
$\phi:[\tilde{x}_i(u_i-\frac{tg_i}{2})]_{\mathbf{1}}
= [x_i(u_i)]_{\mathbf{1}}= [x_i(0)]_{\mathbf{1}}\mapsto
T_i(u_i-\frac{tg_i}{2})$.
Furthermore, one can prove
that 
$\phi:[\tilde{x}_i(u)]_{\mathbf{1}}\mapsto T_i(u)$
for any $(i,u)\in \tilde{P}_+$
by induction on the forward mutations
for  $u>u_i-\frac{tg_i}{2}$ and on the backward mutations for $u<u_i-\frac{tg_i}{2}$,
using the common T-systems for the both sides.
By the restriction of $\phi$
to $\mathcal{A}_{\mathbf{i}}(B,x)$,
we obtain a ring homomorphism $\varphi:
\mathcal{A}_{\mathbf{i}}(B,x)\rightarrow 
\EuScript{T}^{\circ}(B,\mathbf{i})$,
which is the inverse of $\rho$.
\par
(2) This is parallel to (1).
The map $\rho :Y_i(u)\mapsto y_i(u)$,
$1+Y_i(u)\mapsto 1+y_i(u)$
is a group homomorphism by definition.
We can construct the inverse of $\rho$ as follows.
For each $i\in I$, let $u_i\in \mathbb{Z}$ be the largest
nonpositive $u_i$ such that $(i,u_i)\in P_+$.
We define a semifield homomorphism
$\phi:\mathbb{P}_{\mathrm{univ}}(y_i)_{i\in I}
\rightarrow \EuScript{Y}(B,\mathbf{i})$ as follows.
If $u_i=0$, then $\phi(y_i)=Y_i(0)$.
If $u_i < 0$, we define
\begin{align}
\phi(y_i)=
Y_i(u_i)^{-1}
\frac{
\displaystyle
\prod_{(j,v)} (1+Y_j(v))^{-b_{ji}(v)}
}
{
\displaystyle
\prod_{(j,v)} (1+Y_j(v)^{-1})^{b_{ji}(v)},
}
\end{align}
where
 the product in the numerator is taken
for $(j,v)\in P_+$ such that
$u_i < v < 0$ and $b_{ji}(v)<0$,
and 
 the product in the denominator is taken
for $(j,v)\in P_+$ such that
$u_i < v < 0$ and $b_{ji}(v)>0$.
Then, we have $\phi:y_i(u_i)\mapsto Y_i(u_i)$.
Furthermore,
 one can prove
that 
$\phi:y_i(u)\mapsto Y_i(u)$
for any $(i,u)\in P_+$
by induction on the forward mutations
for  $u>u_i$ and on the backward mutations for $u<u_i$,
using the common Y-systems for the both sides.
By the restriction of $\phi$
to $\mathcal{G}_{\mathbf{i}}(B,y)$,
we obtain a group homomorphism $\varphi:
\mathcal{G}_{\mathbf{i}}(B,y)\rightarrow 
\EuScript{Y}^{\circ}(B,\mathbf{i})$,
which is the inverse of $\rho$.
\end{proof}

Aside from the direct connection
between T- and Y-systems in Proposition
\ref{prop:TY1},
there is an algebraic connection,
which has been noticed since the inception
of the original T- and Y-systems \cite{Klumper92,Kuniba94a}.

\begin{Proposition}
\label{prop:GH}
Let $\EuScript{T}(B,\mathbf{i})$ be the ring in Definition
\ref{def:SA}.
For each $(i,u)\in P_+$, we set 
\begin{align}
\label{eq:YT1}
Y_i(u):=\frac{
\displaystyle
\prod_{(j,v)\in \tilde{P}_+}
T_j(v)^{H_+(j,v;i,u)}
}
{
\displaystyle
\prod_{(j,v)\in\tilde{P}_+}
T_j(v)^{H_-(j,v;i,u)}
}.
\end{align}
Then, $Y_i(u)$ satisfies the Y-system
\eqref{eq:yi} in $\EuScript{T}(B,\mathbf{i})$
by replacing $y_i(u)$ with $Y_i(u)$.
\end{Proposition}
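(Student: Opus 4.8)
The plan is to identify the formally defined elements $Y_i(u) \in \EuScript{T}(B,\mathbf{i})$ with the `hatted' coefficients $\hat{y}_i(u)$ attached to the seeds in the mutation sequence \eqref{eq:seedmutseq}, read off in the trivial-coefficient cluster algebra $\mathcal{A}(B,x)$, and then to invoke the Fomin--Zelevinsky fact that these $\hat{y}$-variables obey the coefficient mutation rule \eqref{eq:coef}.

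Concretely, I would work in $\mathcal{A}(B,x)$ and, for each $u \in \mathbb{Z}$, set $\hat{y}_i(u) := \prod_{j} [x_j(u)]_{\mathbf{1}}^{\,b_{ji}(u)}$, the $\hat{y}$-variable of the seed $(B(u),[x(u)]_{\mathbf{1}})$ in the sense of \eqref{eq:gF} with trivial coefficients. The first step is to check, under the isomorphism $\EuScript{T}^{\circ}(B,\mathbf{i}) \cong \mathcal{A}_{\mathbf{i}}(B,x)$ of Theorem \ref{thm:iso}(1), that the element $Y_i(u)$ of \eqref{eq:YT1} corresponds to $\hat{y}_i(u)$. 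This is a bookkeeping check on the exponents $H_{\pm}$: for each $j$ with $b_{ji}(u) \neq 0$ exactly one pair $(j,v) \in \tilde{P}_+$ contributes to the right-hand side of \eqref{eq:YT1}, namely the one with $u \in (v - \tfrac{tg_j}{2}, v + \tfrac{tg_j}{2})$; since the cluster variable in column $j$ is not mutated strictly between two consecutive forward mutation points of that column, one has $[\tilde{x}_j(v)]_{\mathbf{1}} = [x_j(u)]_{\mathbf{1}}$, and substituting this into \eqref{eq:YT1} collapses it to $\prod_j [x_j(u)]_{\mathbf{1}}^{\,b_{ji}(u)} = \hat{y}_i(u)$.

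The second step is the substance of the argument. By \cite{Fomin07}, the $\hat{y}$-variables of successive seeds transform under a single mutation $\mu_k$ exactly by the rule \eqref{eq:coef}, now with $\oplus$ interpreted as ordinary addition in the field $\mathbb{Q}(x)$. Composing these one-step rules along the stretch of \eqref{eq:seedmutseq} running between two consecutive forward mutation points of column $i$ — which is precisely the computation that produced the Y-system \eqref{eq:yi} from \eqref{eq:coef} for the coefficients $y_i(u)$ — shows that the $\hat{y}_i(u)$ satisfy \eqref{eq:yi} verbatim, with each $y_i(u)$ replaced by $\hat{y}_i(u)$, as an identity in $\mathbb{Q}(x)$.

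To conclude, I would transport this identity back to $\EuScript{T}(B,\mathbf{i})$. Since $\mathcal{A}_{\mathbf{i}}(B,x)$ is an integral domain and, by (the proof of) Theorem \ref{thm:iso}(1), $\EuScript{T}(B,\mathbf{i})$ embeds into its field of fractions via $T_j(v) \mapsto [\tilde{x}_j(v)]_{\mathbf{1}}$, an identity of rational expressions in the $T_i(u)$ holds in $\EuScript{T}(B,\mathbf{i})$ as soon as it holds after this substitution — and Steps 1 and 2 provide exactly that for \eqref{eq:yi} with $y_i(u)$ replaced by $Y_i(u)$. The only point that genuinely requires care is Step 1: one must make sure that the half-integer shifts and the open intervals in the definitions of $\tilde{P}_+$ and of $H_{\pm}$ are set up so that, whenever $b_{ji}(u) \neq 0$, the contributing forward mutation point $v$ of column $j$ exists, is unique, and is strictly separated from $u$; once this combinatorial matching is in place, the remainder is a direct appeal to \cite{Fomin07} and Theorem \ref{thm:iso}.
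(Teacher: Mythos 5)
Your proposal is correct and follows essentially the same route as the paper: identify $Y_i(u)$, via the isomorphism of Theorem \ref{thm:iso}(1), with $\hat{y}_i(u)=\prod_{j}[x_j(u)]_{\mathbf{1}}^{\,b_{ji}(u)}$ in (the localization of) $\mathcal{A}_{\mathbf{i}}(B,x)\subset\mathbb{Q}(x)$, and then invoke the Fomin--Zelevinsky result (Proposition 3.9 of \cite{Fomin07}) that the $\hat{y}$-variables obey the coefficient mutation rule with ordinary addition, hence the Y-system. Your Step 1 bookkeeping (including the remark that the slice condition kills $b_{ji}(u)$ when $u$ is itself a forward mutation point of column $j$) just makes explicit the exponent-matching that the paper states as a one-line chain of equalities.
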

Note that \eqref{eq:YT1} is the ratio of
the first and second terms in \eqref{eq:ti}.

\begin{proof}
Thanks to the isomorphism in Theorem
\ref{thm:iso},
one can work in the localization
of $\mathcal{A}_{\mathbf{i}}(B,x)$
by generators $[\tilde{x}_i(u)]_{\mathbf{1}}$
 ($(i,u)\in\tilde{P}_+$),
which is a subring
of the ambient field
$\mathbb{Q}(x)$.
The claim is translated therein as follows:
{\em For each $(i,u)\in P_+$, we set
\begin{align}
\bar{y}_i(u)&:=
\frac{
\displaystyle
\prod_{(j,v)\in\tilde{P}_+}
[\tilde{x}_j(v)]_{\mathbf{1}}^{H_+(j,v;i,u)}
}
{
\displaystyle
\prod_{(j,v)\in\tilde{P}_+}
[\tilde{x}_j(v)]_{\mathbf{1}}^{H_-(j,v;i,u)}
}
=
\frac{
\displaystyle
\prod_{(j,v)\in P_+}
[{x}_j(v)]_{\mathbf{1}}^{{H}'_+(j,v;i,u)}
}
{
\displaystyle
\prod_{(j,v)\in P_+}
[{x}_j(v)]_{\mathbf{1}}^{{H}'_-(j,v;i,u)}
}
=\prod_{j\in I} [x_j(u)]_{\mathbf{1}}^{b_{ji}(u)}.
\end{align}
Then $\bar{y}_i(u)$ satisfies the Y-system
 \eqref{eq:yi}
by replacing $y_i(u)$ with $\bar{y}_i(u)$.}
In fact, this claim  is an immediate consequence of
 \cite[Proposition 3.9]{Fomin07}.
\end{proof}

\begin{Remark}
Some classic examples of T- and Y-systems
are not always in the `straight form' presented here,
but represented by
generators $T_{\bar{i}}(u)$
and $Y_{\bar{i}}(u)$
whose indices $\bar{i}$ belong to
 the orbit space $I/\nu$ of $I$ by $\nu$.
For example, the T- and Y-systems for type $(X,\ell)=
(B_4,4)$ in  Section
\ref{subsec:exTY}
\cite{Inoue10a},
and the sine-Gordon T- and Y-systems
for Example \ref{ex:sine} \cite{Nakanishi10b}
are such cases.
In these examples, it is  just  a `change of
notation' for generators.
However, this makes the reconstruction of the initial
exchange matrix $B$ from given T- or Y-systems
nontrivial,  because {\em a priori\/}
we only know $I/\nu$,
and  we have to find out true index set $I$ and $\nu$
with some guesswork.
\end{Remark}

\subsection{T- and Y-systems for general period}

\label{subsec:nonadm}
Conceptually, the notions of T- and Y-systems
can be straightforwardly extended to  general $\nu$-periods of $B$,
though they become a little apart from the
`classic' T- and Y-systems.
We will use them  in Section \ref{sec:dilog}.

Let  $\mathbf{i}=\mathbf{i}(0)\,|\, 
\cdots \,|\, \mathbf{i}(t-1)$ be a slice of 
any (not necessarily regular) $\nu$-period $\mathbf{i}$ of $B$.
One can still define the sequence of
 seeds $(B(u),x(u),y(u))$ ($u\in \mathbb{Z}$)
 and the forward mutation points $(i,u)\in P_+$
as in the regular case.

Fix $i\in I$, and let
\begin{align}
\label{eq:for1}
\dots ,(i,u),\, (i,u'),\,  (i,u''),\, \dots
\quad
(\dots < u < u' < u''<\dots)
\end{align}
be the sequence of the forward mutation points.
(It may be empty for some $i$.)
In general,
if it is not empty,
the sequence  $\dots, u,  u', u'',\dots$
is periodic for $u \rightarrow u + tg$,
but it does  {\em not necessarily\/} have the common difference.
For each $(i,u)\in P_+$,
 let $(i,u+\lambda_+(i,u))$
and $(i,u-\lambda_-(i,u))$  be the nearest ones
to $(i,u)$ in the sequence \eqref{eq:for1}
in  the forward and backward directions, respectively;
in other words,
$(i,u-\lambda_-(i,u))$, $(i,u)$, $(i,u+\lambda_+(i,u))$
are three consecutive forward mutation points in \eqref{eq:for1}.
If $\mathbf{i}$ is regular,
then $\lambda_{\pm}(i,u)=tg_i$,
which is the common difference
(therefore, called regular).
In general, we have $0< \lambda_{\pm}(i,u)<tg$,
$\lambda_{\pm}(i,u+tg)=\lambda_{\pm}(i,u)$,
and
\begin{align}
\lambda_+(i,u-\lambda_-(i,u))=\lambda_-(i,u).
\end{align}

Let $J(\mathbf{i},\nu)$ be the subset of $I$
consisting of all the components of $\mathbf{j}(\mathbf{i},\nu)$.
Note that the condition (A1) in Definition \ref{def:regular}
means that $J(\mathbf{i},\nu)=I$.

Using these notations,
the relations \eqref{eq:yi'} and \eqref{eq:xi'}
are  generalized as follows.
For  $(i,u)\in P_+$,
\begin{align}
\label{eq:yi'2}
\textstyle
y_i\left(u \right)y_i\left(u+\lambda_+(i,u) \right)
&=
\frac{
\displaystyle
\prod_{(j,v)\in P_+} (1+y_j(v))^{G'_+(j,v;i,u)}
}
{
\displaystyle
\prod_{(j,v)\in P_+} (1+y_j(v)^{-1})^{G'_-(j,v;i,u)}
},\\
\label{eq:G'2}
G'_{\pm}(j,v;i,u)&=
\begin{cases}
\mp b_{ji}(v) & v\in (u, u+\lambda_+(i,u)),
b_{ji}(v)\lessgtr 0\\
0 & \mbox{otherwise},
\end{cases}
\end{align}
and
\begin{align}
\label{eq:xi'3}
\begin{split}
x_i(u)x_i(u+\lambda_+(i,u))
&=
\frac{y_i(u)}{1+y_i(u)}
\prod_{j\in I\setminus J(\mathbf{i},\nu):\, b_{ji}>0}
 x_j^{b_{ji}(u)}
\prod_{(j,v)\in P_+} x_j(v)^{H'_+(j,v;i,u)}\\
&\quad
+
\frac{1}{1+y_i(u)}
\prod_{j\in I\setminus J(\mathbf{i},\nu):\, b_{ji}<0}
 x_j^{-b_{ji}(u)}
\prod_{(j,v)\in P_+} x_j(v)^{H'_-(j,v;i,u)},
\end{split}\\
\label{eq:tH2}
H'_{\pm}(j,v;i,u)&=
\begin{cases}
\pm b_{ji}(u)&
u\in (v- \lambda_-(j,v),v), b_{ji}(u)\gtrless 0\\
0 & \mbox{otherwise}.
\end{cases}
\end{align}
When a $\nu$-period $\mathbf{i}$ of $B$ 
satisfies the condition (A1) in Definition \ref{def:regular},
the relation \eqref{eq:xi'3} slightly simplifies as 
\begin{align}
\label{eq:xi'2}
\begin{split}
x_i(u)x_i(u+\lambda_+(i,u))
&=
\frac{y_i(u)}{1+y_i(u)}
\prod_{(j,v)\in P_+} x_j(v)^{H'_+(j,v;i,u)}\\
&\quad
+
\frac{1}{1+y_i(u)}
\prod_{(j,v)\in P_+} x_j(v)^{H'_-(j,v;i,u)}.
\end{split}
\end{align}

Unfortunately,
one cannot simultaneously rewrite \eqref{eq:yi'2}
and \eqref{eq:xi'3}/\eqref{eq:xi'2} into
the balanced form similar to \eqref{eq:yi}
and \eqref{eq:xi}.
Therefore, we just regard
\eqref{eq:yi'2} and \eqref{eq:xi'3}/\eqref{eq:xi'2} as
the Y- and T-systems associated with
a slice $\mathbf{i}=\mathbf{i}(0)\,|\, 
\cdots \,|\, \mathbf{i}(t-1)$
 of a $\nu$-period of $B$.

Accordingly, Proposition \ref{prop:TY1} is generalized as follows.
\begin{Proposition}[Duality]
\label{prop:TY1'}
Let $D=\mathrm{diag} (d_i)_{i\in I}$ be
a diagonal matrix such that ${}^{t}(DB)=-DB$.
Let $G'_{\pm}(j,v;i,u)$ and 
$H'_{\pm}(j,v;i,u)$ the ones
in \eqref{eq:G'2}
and \eqref{eq:tH2}, respectively.
Then, the following relation holds
for any $(j,v)\in P_+$ and $(i,u)\in \tilde{P}_+$.
\begin{align}
\label{eq:GH1'}
d_j G'_{\pm} (j,v;i,u-\lambda_-(i,u))= d_i H'_{\pm} (i,u;j,v).
\end{align}
In particular, if $B$ is skew symmetric, we have
\begin{align}
\label{eq:GH2'}
 G'_{\pm} (j,v;i,u-\lambda_-(i,u))= H'_{\pm} (i,u;j,v).
\end{align}
\end{Proposition}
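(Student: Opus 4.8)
The plan is to proceed exactly as in the proof of Proposition~\ref{prop:TY1}: the whole identity reduces to comparing the two piecewise formulas \eqref{eq:G'2} and \eqref{eq:tH2} entry by entry, once their conditional index ranges have been aligned. First I would invoke the fact that the diagonalizing matrix $D$ of $B$ simultaneously diagonalizes every matrix $B(v)$ obtained from $B$ by a composite mutation \cite[Proposition~4.5]{Fomin02}, so that $d_i b_{ij}(v) = -\,d_j b_{ji}(v)$ for all $i,j\in I$ and all $v\in\mathbb{Z}$. Since $d_i, d_j > 0$, this in particular means that $b_{ji}(v)$ and $b_{ij}(v)$ have opposite signs whenever either is nonzero (hence opposite strict inequalities against $0$), which is precisely the switch of inequalities one meets in passing from $G'_\pm$ to $H'_\pm$.

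Next I would use the identity $\lambda_+(i,u-\lambda_-(i,u)) = \lambda_-(i,u)$ recorded just before the statement in order to rewrite the left-hand side. By \eqref{eq:G'2} with base point $(i,\,u-\lambda_-(i,u))$, the quantity $G'_\pm(j,v;i,u-\lambda_-(i,u))$ equals $\mp\,b_{ji}(v)$ when $v \in (u-\lambda_-(i,u),\,u)$ and $b_{ji}(v) \lessgtr 0$, and vanishes otherwise; note that the conditional interval, which \emph{a priori} is $(u-\lambda_-(i,u),\,u-\lambda_-(i,u)+\lambda_+(i,u-\lambda_-(i,u)))$, collapses to $(u-\lambda_-(i,u),\,u)$ by the identity above. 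On the other side, reading \eqref{eq:tH2} with the roles of the two index--time pairs interchanged, $H'_\pm(i,u;j,v)$ equals $\pm\,b_{ij}(v)$ when $v \in (u-\lambda_-(i,u),\,u)$ and $b_{ij}(v) \gtrless 0$, and vanishes otherwise; here the relevant backward gap attached to $(i,u)$ is exactly the $\lambda_-(i,u)$ produced by the shift on the left-hand side, so the two conditional intervals genuinely coincide.

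Finally I would multiply the left-hand side by $d_j$: on the common support one has $d_j\,G'_\pm(j,v;i,u-\lambda_-(i,u)) = \mp\,d_j b_{ji}(v) = \pm\,d_i b_{ij}(v)$, and the support conditions match by the opposite-sign relation noted above, so this equals $d_i\,H'_\pm(i,u;j,v)$, while both sides vanish off the common support. This proves \eqref{eq:GH1'}, and taking $D = \mathrm{id}$ in the skew symmetric case gives \eqref{eq:GH2'}. The one point requiring genuine care---and the main, though mild, obstacle---is the bookkeeping: matching the half-open intervals via $\lambda_+(i,u-\lambda_-(i,u)) = \lambda_-(i,u)$ and checking that the triple flip ($\mp \leftrightarrow \pm$, $\lessgtr \leftrightarrow \gtrless$, and $b_{ji} \leftrightarrow b_{ij}$) is internally consistent. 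Nothing beyond the argument for Proposition~\ref{prop:TY1} is involved; the non-regular case merely replaces the constant gap $tg_i$ used there by the position-dependent gaps $\lambda_\pm(i,u)$.
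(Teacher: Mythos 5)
Your proposal is correct and follows exactly the route the paper intends: the paper's own proof simply declares the identity ``immediate from \eqref{eq:G'2} and \eqref{eq:tH2}'', and your expansion --- invoking the common skew-symmetrizer $d_i b_{ij}(v)=-d_j b_{ji}(v)$ for all mutated matrices, aligning the intervals via $\lambda_+(i,u-\lambda_-(i,u))=\lambda_-(i,u)$, and matching the sign conditions --- is precisely the omitted bookkeeping. Nothing further is needed.
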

\begin{proof}
This is immediate from \eqref{eq:G'2} and \eqref{eq:tH2}.
\end{proof}
The results in Section \ref{subsec:stand} can be
also generalized and/or modified straightforwardly.
We leave it as an exercise for the reader.

\section{Dilogarithm identities}
\label{sec:dilog}

Now we are ready to work on the main subject
of the paper, the dilogarithm identities
associated with any period of a seed.
They are natural generalizations of the results
in the former examples
\cite{Frenkel95,Chapoton05, Nakanishi09,Inoue10a,Inoue10b,Nakanishi10b}.
The subject has originated from the pioneering works
by Bazhanov, Kirillov, and Reshetikhin \cite{Kirillov86,
Kirillov89,Kirillov90,
Bazhanov90}.
See also \cite{Kirillov95,Zagier07, Nahm07} for further background
of the identities.

\subsection{Dilogarithm identities}
Here we concentrate on the case when the exchange matrix
 $B$ is {\em skew symmetric\/}.
(See Section \ref{subsec:dilog2} for the skew symmetrizable case.)
Let $\mathbf{i}$ be any period of $(B,x,y)$.
In particular,
$\mathbf{i}$ is also a period of $B$.
Let $\mathbf{i}=\mathbf{i}(0)\,|\, 
\cdots \,|\, \mathbf{i}(t-1)$
be  any slice of $\mathbf{i}$.
Then, the sequence of seeds $(B(u),x(u),y(u))$ ($u\in \mathbb{Z}$)
and the forward
mutation points $(i,u)\in P_+$ are defined 
as in Section \ref{sec:T},
and we have the associated Y-system
\eqref{eq:yi'2}.

To emphasize the periodicity, we set $\Omega:=t$.
Then, by the periodicity assumption, we have
\begin{align}
\label{eq:yperiod}
y_i(u+\Omega)&=y_i(u).
\end{align}
Accordingly, we define the `fundamental regions'
$S_+$  of forward mutation points by
\begin{align}
S_+&=\{ (i,u)\in P_+ \mid 0\leq u < \Omega\}.
\end{align}

Thanks to Theorem \ref{thm:pos}, each Laurent monomial
$[y_i(u)]_{\mathbf{T}}$ in $y$ ($(i,u)\in P_+$) is
either positive or negative.
Let $N_+$ (resp. $N_-$) be the total number of
positive (resp. negative) monomials $[y_i(u)]_{\mathbf{T}}$
in the fundamental region $S_+$.

Let $L(x)$ be the Rogers dilogarithm function  \cite{Lewin81}.
\begin{align}
\label{eq:L0}
L(x)=-\frac{1}{2}\int_{0}^x 
\left\{ \frac{\log(1-y)}{y}+
\frac{\log y}{1-y}
\right\} dy
\quad (0\leq x\leq 1).
\end{align}
It satisfies the following properties.
\begin{gather}
\label{eq:L01}
L(0)=0,
\quad L(1)=\frac{\pi^2}{6},\\
\label{eq:euler}
L(x)+L(1-x)=\frac{\pi^2}{6}
\quad (0\leq x\leq 1).
\end{gather}

The following is the main result of the paper.
\begin{Theorem}[Dilogarithm identity in the skew symmetric case]
\label{thm:DI}
Suppose that
a family of real positive numbers
$\{Y_i(u) \mid (i,u)\in P_+\}$
satisfies the Y-system \eqref{eq:yi'2}
by replacing $y_i(u)$ with $Y_i(u)$.
Then, we have the identities.
\begin{align}
\label{eq:DI}
\frac{6}{\pi^2}
\sum_{
(i,u)\in S_+
}
L\left(
\frac{Y_i(u)}{1+Y_i(u)}
\right)
&=N_-,\\
\label{eq:DI'}
\frac{6}{\pi^2}
\sum_{
(i,u)\in S_+
}
L\left(
\frac{1}{1+Y_i(u)}
\right)
&=N_+.
\end{align}
\end{Theorem}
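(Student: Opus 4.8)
The plan is to derive \eqref{eq:DI} and \eqref{eq:DI'} from a single \emph{local} dilogarithm identity attached to one mutation, and then to sum it over one period of the mutation sequence \eqref{eq:seedmutseq} so that the ``boundary'' contributions telescope away. First I would record that the two identities are equivalent: since $\frac{Y_i(u)}{1+Y_i(u)}$ and $\frac{1}{1+Y_i(u)}$ sum to $1$, Euler's relation \eqref{eq:euler} gives $L\bigl(\frac{Y_i(u)}{1+Y_i(u)}\bigr)+L\bigl(\frac{1}{1+Y_i(u)}\bigr)=\frac{\pi^2}{6}$ for each $(i,u)$, so the left-hand sides of \eqref{eq:DI} and \eqref{eq:DI'} add up to $|S_+|$, and $|S_+|=N_++N_-$ because by Theorem \ref{thm:pos}(a) each $[y_i(u)]_{\mathbf{T}}$ is a positive or a negative Laurent monomial. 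Thus it suffices to prove \eqref{eq:DI}. For $(i,u)\in P_+$ I set $\varepsilon_i(u)=+1$ or $-1$ according to the sign of that monomial, so $N_-=\#\{(i,u)\in S_+\mid\varepsilon_i(u)=-1\}$.

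Next I would reduce to a convenient class of solutions. The Y-system \eqref{eq:yi'2}, together with the periodicity $Y_i(u+\Omega)=Y_i(u)$, determines all $Y_i(u)$ from finitely many free positive parameters, and the left-hand side of \eqref{eq:DI} is real-analytic in them; so it is enough to prove \eqref{eq:DI} when the $Y_i(u)$ are the positive real specializations $y_i(u)|_y$ of the universal coefficients $y_i(u)\in\mathbb{P}_{\mathrm{univ}}(y)$ of $\mathcal{A}(B,x,y)$, the general case reducing to this by continuity. For these solutions the factorizations \eqref{eq:gF}, \eqref{eq:Yfact} and the $F$-polynomials $F'_i(y)$ are available, and by Theorem \ref{thm:pos}(c) each $F'_i$ has constant term $1$, hence is positive on positive arguments; in particular every argument of $L$ occurring below lies in $(0,1)$.

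The core of the proof is the local identity; this is what Proposition \ref{prop:local} and formula \eqref{eq:W} provide. Using the exchange relation \eqref{eq:coef} at a mutation vertex $k$, the factorization \eqref{eq:Yfact}, the exchange relation for $F$-polynomials of \cite{Fomin07}, and the properties \eqref{eq:L0}--\eqref{eq:euler} of the Rogers dilogarithm (a computation in the spirit of the classical five-term relation, equivalently carried out by differentiation), one shows that there is a function $W$ of the seed, built from its $F$-polynomials as in \eqref{eq:W}, such that across the forward mutation at a point $(k,u)\in P_+$
\begin{align}
\label{eq:localstep}
L\!\left(\frac{Y_k(u)}{1+Y_k(u)}\right)=W(u)-W(u+1)+\frac{\pi^2}{6}\cdot\frac{1-\varepsilon_k(u)}{2},
\end{align}
while $W$ is unchanged at the non-mutation steps of \eqref{eq:seedmutseq}. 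The use of the $F$-polynomials in place of the naive Fock--Goncharov expression is exactly the subtlety flagged in the introduction, and the sign coherence of Theorem \ref{thm:pos} is what forces the correction term to equal $\frac{\pi^2}{6}$ precisely when $\varepsilon_k(u)=-1$. I expect establishing \eqref{eq:localstep} --- pinning down $W$ and verifying it --- to be the one genuinely substantial step; the rest is bookkeeping.

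Finally I would sum \eqref{eq:localstep} over the $|S_+|$ forward mutation points with $0\le u<\Omega=t$, taken in the order of \eqref{eq:seedmutseq}. The left-hand sides yield $\sum_{(i,u)\in S_+}L\bigl(\frac{Y_i(u)}{1+Y_i(u)}\bigr)$; on the right the $W$-terms telescope to $W(0)-W(\Omega)$, which vanishes since $\mathbf{i}$ is a period of $(B,x,y)$ --- so $\nu=\mathrm{id}$ and $(B(\Omega),x(\Omega),y(\Omega))=(B,x,y)$, whence $W(\Omega)=W(0)$ --- and the correction terms sum to $\frac{\pi^2}{6}N_-$ by the definition of $\varepsilon_i(u)$. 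Multiplying by $\frac{6}{\pi^2}$ yields \eqref{eq:DI}, and \eqref{eq:DI'} follows from the equivalence in the first paragraph.
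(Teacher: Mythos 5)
The crux of your argument is the local identity expressing $L\bigl(\frac{Y_k(u)}{1+Y_k(u)}\bigr)$ as $W(u)-W(u+1)+\frac{\pi^2}{6}\cdot\frac{1-\varepsilon_k(u)}{2}$ for a real-valued potential $W$ attached to each seed, and this is a genuine gap: no such termwise numerical identity is available, and the paper does not prove one. What \eqref{eq:W} actually defines is an element $V'$ of $\bigwedge^2\mathbb{P}_{\mathrm{univ}}(y)$, and the provable local statement (Proposition \ref{prop:local}) is the wedge identity \eqref{eq:local}, $V''-V'=y_k'\wedge(1+y_k')$; there is no map $\bigwedge^2\mathbb{R}_+\to\mathbb{R}$ sending $f\wedge(1-f)$ to $L(f)$, so this cannot be promoted to an equality of real numbers one mutation at a time. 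If your identity did hold, then already for the $A_2$ Y-system each individual term of Abel's five-term relation would be a coboundary plus a multiple of $\pi^2/6$, i.e., the pentagon relation would split into five two-term relations. More structurally, the existence of your $W$ is exactly the statement that the $1$-cochain $(B',y')\mapsto L\bigl(\frac{y_k'}{1+y_k'}\bigr)-\frac{\pi^2}{6}\cdot\frac{1-\varepsilon}{2}$ on the exchange graph is exact, which presupposes that its sum over \emph{every} cycle vanishes --- that is, it presupposes the theorem. So the step you flag as ``the one genuinely substantial step'' is not merely substantial; as formulated it is circular, and you indicate no route to it.

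The paper instead inserts two mechanisms your outline omits. First, the telescoping is carried out inside $\bigwedge^2\mathbb{P}_{\mathrm{univ}}(y)$: summing \eqref{eq:local} over one period and using $V(\Omega)=V(0)=0$ yields the constancy condition $\sum_{(i,u)\in S_+}y_i(u)\wedge(1+y_i(u))=0$ of Proposition \ref{prop:const}. Second, this wedge relation is converted into a statement about real numbers only for the \emph{sum}: by the Frenkel--Szenes theorem (Theorem \ref{thm:const}) the constancy condition forces $\sum_{(i,u)\in S_+}L\bigl(\varphi_t(\tfrac{y_i(u)}{1+y_i(u)})\bigr)$ to be independent of the deformation parameter $t$, and the constant is then computed by Chapoton's $0/\infty$ limit $\varphi_t(y_i)=t\to 0$, where Theorem \ref{thm:pos} (a), (c) and \eqref{eq:Yfact} guarantee each term tends to $0$ or $\pi^2/6$ with exactly $N_-$ of the latter. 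Your reduction of \eqref{eq:DI'} to \eqref{eq:DI} via \eqref{eq:euler}, and your reduction to specializations of the universal coefficients, both match the paper and are fine; but the differentiation/constancy argument and the limit evaluation that actually produce the number $N_-$ are entirely absent, having been absorbed into the unestablished local identity.
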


Two identities
\eqref{eq:DI} and \eqref{eq:DI'}
are equivalent to each other due to \eqref{eq:euler}.

\subsection{Proof of Theorem \ref{thm:DI}}

We prove Theorem \ref{thm:DI}
using the method developed by
 \cite{Frenkel95,Chapoton05,Nakanishi09}.

We start from a  very general theorem
on the `constancy property' of dilogarithm sum
by Frenkel-Szenes \cite{Frenkel95}.
For any multiplicative abelian group $A$,
let $A\otimes_{\mathbb{Z}} A$ be the additive
abelian group with generators $f\otimes g$
($f,g\in A$) and relations
\begin{align}
(fg)\otimes h = f\otimes h + g \otimes h,
\quad f\otimes (gh)=f\otimes g + f\otimes h.
\end{align}
It follows that we have
\begin{align}
1\otimes h = h\otimes 1=0,
\quad f^{-1}\otimes g= f\otimes g^{-1}=- f\otimes g.
\end{align}
Let $S^2 A$ be the subgroup of
$A\otimes_{\mathbb{Z}} A$ generated 
by $f\otimes f$ ($f\in A$).
Define $\bigwedge^2 A$ be the quotient group
of $A\otimes_{\mathbb{Z}} A$ by $S^2 A$.
In $A\otimes_{\mathbb{Z}} A$
we use $\wedge$ instead of  $\otimes$.
Hence, $f\wedge g = -g \wedge f$ holds.

Now let $\mathcal{I}$ be any open or closed interval of $\mathbb{R}$,
and  let $\EuScript{C}(\mathcal{I})$ be the multiplicative abelian group
of all the differentiable functions $f(t)$
from $\mathcal{I}$ to the set of all the positive real numbers
$\mathbb{R}_{+}$.

\begin{Theorem}[{\cite[Proposition 1]{Frenkel95}}]
\label{thm:const}
Let  $f_1(t),\dots,f_k(t)$ be differentiable functions
from $\mathcal{I}$ to $(0,1)$.
Suppose that they satisfy the following
relation in $\bigwedge^2 \EuScript{C}(\mathcal{I})$.
\begin{align}
\label{eq:const1}
\sum_{i=1}^k f_i(t) \wedge (1-f_i(t)) = 0
\quad \mbox{\rm (constancy condition)}.
\end{align}
Then, the dilogarithm sum $\sum_{i=1}^k L(f_i(t))$ is constant
with respect to $t\in \mathcal{I}$.
\end{Theorem}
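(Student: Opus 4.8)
The plan is to show that the $t$-derivative of the dilogarithm sum vanishes identically on $\mathcal{I}$; since $\mathcal{I}$ is an interval, constancy follows at once. By the chain rule and the definition \eqref{eq:L0} of $L$, the derivative of a single summand is
\[
\frac{d}{dt}L(f_i(t)) = L'(f_i)f_i' = -\frac{1}{2}\left(\frac{\log(1-f_i)}{f_i}+\frac{\log f_i}{1-f_i}\right)f_i',
\]
so the total derivative $\sum_i L'(f_i)f_i'$ is a sum of such expressions. The whole point is to recognize this sum as the image of the left-hand side of \eqref{eq:const1} under a suitable group homomorphism out of $\bigwedge^2 \EuScript{C}(\mathcal{I})$, so that the hypothesis \eqref{eq:const1} forces it to be zero.

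First I would construct this homomorphism. Define a map $\delta$ on the generators of $\EuScript{C}(\mathcal{I})\otimes_{\mathbb{Z}}\EuScript{C}(\mathcal{I})$ by
\[
\delta(f\otimes g) = \frac{1}{2}\left(\log f\,\frac{g'}{g} - \log g\,\frac{f'}{f}\right),
\]
a real-valued differentiable function on $\mathcal{I}$ (I suppress the volume form $dt$). Since every element of $\EuScript{C}(\mathcal{I})$ is a positive function, $\log f$ and $\log g$ are differentiable and $\delta$ is well defined on generators. The key formal point is that $\delta$ descends to $\bigwedge^2\EuScript{C}(\mathcal{I})$: bilinearity holds because $\log(f_1f_2)=\log f_1+\log f_2$ and $(f_1f_2)'/(f_1f_2)=f_1'/f_1+f_2'/f_2$, so $\delta$ respects the defining relations of the tensor product; and $\delta(f\otimes f)=0$ manifestly, so $\delta$ annihilates the subgroup $S^2\EuScript{C}(\mathcal{I})$ and hence factors through the quotient $\bigwedge^2\EuScript{C}(\mathcal{I})$. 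In particular $\delta$ is antisymmetric, consistent with the wedge notation.

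Then I would carry out the central computation. Taking $g=1-f_i$, so that $g'/g=-f_i'/(1-f_i)$, one finds
\[
\delta\bigl(f_i\wedge(1-f_i)\bigr) = -\frac{1}{2}\left(\frac{\log(1-f_i)}{f_i}+\frac{\log f_i}{1-f_i}\right)f_i' = \frac{d}{dt}L(f_i).
\]
Applying $\delta$ to the constancy hypothesis \eqref{eq:const1} and using that $\delta$ is a homomorphism gives
\[
\frac{d}{dt}\sum_{i=1}^k L(f_i(t)) = \sum_{i=1}^k \delta\bigl(f_i\wedge(1-f_i)\bigr) = \delta\!\left(\sum_{i=1}^k f_i\wedge(1-f_i)\right) = \delta(0) = 0,
\]
which proves the theorem. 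The only genuine obstacle is the well-definedness of $\delta$ on the exterior square, namely checking that both the bilinear relations defining $\EuScript{C}(\mathcal{I})\otimes_{\mathbb{Z}}\EuScript{C}(\mathcal{I})$ and the symmetric relations defining $S^2\EuScript{C}(\mathcal{I})$ are respected; once $\delta$ is in hand, the identification $\delta\bigl(f_i\wedge(1-f_i)\bigr)=\tfrac{d}{dt}L(f_i)$ is a one-line differentiation and the remainder is purely formal.
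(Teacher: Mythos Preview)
Your argument is correct and matches the approach the paper alludes to: the paper does not spell out a proof but remarks that one ``just show[s] the derivative of the dilogarithm sum vanishes by the symmetry reason,'' which is precisely your construction of the antisymmetric form $\delta$ on $\bigwedge^2\EuScript{C}(\mathcal{I})$ and the identification $\delta(f_i\wedge(1-f_i))=\tfrac{d}{dt}L(f_i)$. One tiny quibble: the image $\delta(f\otimes g)$ need not be \emph{differentiable} (only the existence of $f',g'$ is assumed), but this is irrelevant since the target only needs to be an additive abelian group of real-valued functions.
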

We remark that the proof of the theorem by \cite{Frenkel95} is quite simple,
just showing the derivative of the dilogarithm sum vanishes
by the symmetry reason.

Next we use the idea of Chapoton \cite{Chapoton05}
to integrate
the condition \eqref{eq:const1} in the cluster algebra setting.
We are going to prove the following claim.

\begin{Proposition}
\label{prop:const}
In $\bigwedge ^2 \mathbb{P}_{\mathrm{univ}}(y)$,
the following relation holds.
\begin{align}
\label{eq:const2}
\sum_{(i,u)\in S_+}
\frac{y_i(u)}{1+y_i(u)}\wedge \frac{1}{1+y_i(u)}=0,
\end{align}
or, equivalently,
\begin{align}
\label{eq:const3}
\sum_{(i,u)\in S_+}
y_i(u)\wedge (1+y_i(u))=0.
\end{align}
\end{Proposition}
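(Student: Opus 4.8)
The plan is to establish \eqref{eq:const3}; this is equivalent to \eqref{eq:const2} because in $\bigwedge^2\mathbb{P}_{\mathrm{univ}}(y)$ one has $\frac{a}{b}\wedge\frac{1}{b}=a\wedge b^{-1}+b^{-1}\wedge b^{-1}=-\,a\wedge b$, so with $a=y_i(u)$ and $b=1+y_i(u)$ the two sums differ only by an overall sign. First I would refine the periodic sequence of seeds \eqref{eq:seedmutseq} into single mutations: by the compatibility condition \eqref{eq:compat} and Lemma \ref{lem:order}, each composite mutation $\mu_{\mathbf{i}(p)}$ breaks up into $r_p$ pairwise commuting single mutations, performed in any order, none of which alters the coefficient $y_i(p)$ for $i$ a component of $\mathbf{i}(p)$ (the relevant exchange-matrix entries staying zero). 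Carrying this out over one period $\mathbf{i}$ yields a finite list of single mutations $\mu_{k_0},\dots,\mu_{k_{N-1}}$, where $N$ is the length of $\mathbf{i}$, and seeds $(B^{(m)},x^{(m)},y^{(m)})$ with $(B^{(0)},x^{(0)},y^{(0)})=(B,x,y)=(B^{(N)},x^{(N)},y^{(N)})$ by the periodicity hypothesis; under the natural bijection $S_+\leftrightarrow\{0,\dots,N-1\}$ the left-hand side of \eqref{eq:const3} becomes $\sum_{m=0}^{N-1}y^{(m)}_{k_m}\wedge\bigl(1+y^{(m)}_{k_m}\bigr)$.

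The core is then to expand each summand through the $F$-polynomials and telescope. Write $F^{(m)}_j$ for the $F$-polynomial of $x^{(m)}_j$ and $c^{(m)}_{jk}$ for the exponents of $[y^{(m)}_k]_{\mathbf{T}}$. Formula \eqref{eq:Yfact} gives $y^{(m)}_{k_m}=\bigl(\prod_j y_j^{c^{(m)}_{jk_m}}\bigr)\prod_j(F^{(m)}_j)^{b^{(m)}_{jk_m}}$, and the exchange relation for $F$-polynomials of \cite{Fomin07}, combined again with \eqref{eq:Yfact}, gives, with $[a]_+=\max(a,0)$ and $F^{(m+1)}_j=F^{(m)}_j$ for $j\ne k_m$,
\[
1+y^{(m)}_{k_m}=\frac{F^{(m)}_{k_m}F^{(m+1)}_{k_m}}{\prod_{j}y_j^{[-c^{(m)}_{jk_m}]_+}\cdot\prod_{j}(F^{(m)}_j)^{[-b^{(m)}_{jk_m}]_+}}.
\]
Substituting both expressions into $y^{(m)}_{k_m}\wedge(1+y^{(m)}_{k_m})$ and expanding by bilinearity, the only contribution that is a wedge of two monomials in the $y_j$'s is $\bigl(\prod_j y_j^{c^{(m)}_{jk_m}}\bigr)\wedge\bigl(\prod_j y_j^{[-c^{(m)}_{jk_m}]_+}\bigr)$, and this vanishes exactly because the vector $(c^{(m)}_{jk_m})_j$ has all of its entries of one sign, by Theorem \ref{thm:pos} --- this is the one place where skew-symmetry of $B$ is essential. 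The remaining contributions all involve at least one $F$-polynomial; summing over $m=0,\dots,N-1$ and using the $F$-polynomial recursion, $F^{(m+1)}_j=F^{(m)}_j$ for $j\ne k_m$, and the periodicities $F^{(N)}_j=F^{(0)}_j=1$, $B^{(N)}=B^{(0)}$, $c^{(N)}_{ij}=c^{(0)}_{ij}$, these cancel telescopically, yielding \eqref{eq:const3}. A convenient alternative bookkeeping is to carry out the computation in $\bigwedge^2$ of the multiplicative group of the ambient field $(\mathbb{Q}\mathbb{P}_{\mathrm{univ}}(y))(x)$, replacing each coefficient by its hatted counterpart $\hat{y}_i=y_i\prod_{j}x_j^{b_{ji}}$ (which mutates by the same rule \cite{Fomin07}), rewriting $1+\hat{y}_{k_m}$ through the cluster variables by means of \eqref{eq:clust}, and then applying the specialization $x_i\mapsto 1$, under which $\hat{y}^{(m)}_i\mapsto y^{(m)}_i$ and $1+\hat{y}^{(m)}_i\mapsto 1+y^{(m)}_i$.

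I expect the main obstacle to be precisely this last telescoping: once the $y\wedge y$ terms are gone, one must sort the $F$-polynomial wedges produced at the various mutation steps and check that they cancel over a full period. This is bookkeeping rather than a conceptual hurdle --- the conceptual inputs are only \eqref{eq:Yfact}, the $F$-polynomial exchange relation of \cite{Fomin07}, the sign-coherence Theorem \ref{thm:pos}, and the periodicity of the seed --- but it is where essentially all of the work lies, and it is cleanest to organize it by recognizing each summand, after the reductions above, as a one-mutation coboundary.
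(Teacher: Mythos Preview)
Your proposal is correct and is essentially the paper's second proof (Section~\ref{subsec:local}), where the ``one-mutation coboundary'' you anticipate is made explicit: the paper introduces the potential
\[
V'=\frac{1}{2}\sum_{i\in I} F'_i\wedge\bigl(y'_i\,[y'_i]_{\mathbf{T}}\bigr)
=\sum_{i\in I} F'_i\wedge y'_i+\frac{1}{2}\sum_{i,j\in I} b'_{ij}\,F'_i\wedge F'_j
\]
attached to each seed, proves $V''-V'=y'_k\wedge(1+y'_k)$ for a single mutation (Proposition~\ref{prop:local}), and then sums over one period using $V(\Omega)=V(0)=0$ from $F_i(0)=1$ and periodicity. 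Your reductions---refining to single mutations via \eqref{eq:compat} and Lemma~\ref{lem:order}, your formulas for $y^{(m)}_{k_m}$ and $1+y^{(m)}_{k_m}$, and the vanishing of the pure tropical wedge by sign-coherence---match the paper's exactly; writing down $V'$ is precisely the bookkeeping device that makes the telescoping you describe transparent. (The paper also gives a first, more combinatorial proof in Section~6.3 that manipulates the $H'_{\pm}$, $G'_{\pm}$ directly; your route avoids that.)
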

It is clear that the relations \eqref{eq:const2}
and \eqref{eq:const3} are equivalent. 
We call them
the {\em constancy condition\/}
for the dilogarithm identities \eqref{eq:DI} and \eqref{eq:DI'}.
Temporarily assuming Proposition \ref{prop:const},
we prove the following theorem,
which is equivalent to Theorem \ref{thm:DI}.

\begin{Theorem}
Let $\mathbb{R}_+$ be the semifield of all the positive real numbers
with the usual multiplication and addition.
Then, for any semifield homomorphism $\varphi:
\mathbb{P}_{\mathrm{univ}}(y) \rightarrow \mathbb{R}_+$,
we have the identity,
\begin{align}
\label{eq:DI1}
\frac{6}{\pi^2}
\sum_{
(i,u)\in S_+
}
L\left(
\varphi
\left(
\frac{y_i(u)}{1+y_i(u)}
\right)
\right)
&=N_-,\\
\label{eq:DI'1}
\frac{6}{\pi^2}
\sum_{
(i,u)\in S_+
}
L\left(
\varphi
\left(
\frac{1}{1+y_i(u)}
\right)
\right)
&=N_+.
\end{align}
\end{Theorem}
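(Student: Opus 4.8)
The plan is to deduce \eqref{eq:DI1}--\eqref{eq:DI'1} from the constancy theorem of Frenkel--Szenes (Theorem \ref{thm:const}) together with the constancy condition of Proposition \ref{prop:const}, and then to pin down the resulting constant by evaluating it at a ``tropical'' degeneration. Fix $\varphi$ and put $a_i:=\varphi(y_i)\in\mathbb{R}_+$. For $t$ in the open interval $\mathcal{I}:=(0,\infty)$ I would define a semifield homomorphism $\varphi_t:\mathbb{P}_{\mathrm{univ}}(y)\to\mathbb{R}_+$ by $\varphi_t(y_i):=a_i t$, so that $\varphi_1=\varphi$. Since each $y_i(u)$ is a subtraction-free rational function of $y$, the function $t\mapsto\varphi_t(y_i(u))$ is positive and real-analytic on $\mathcal{I}$; hence $f_{(i,u)}(t):=\varphi_t\bigl(\tfrac{y_i(u)}{1+y_i(u)}\bigr)=\tfrac{\varphi_t(y_i(u))}{1+\varphi_t(y_i(u))}\in(0,1)$ defines an element of $\EuScript{C}(\mathcal{I})$, with $1-f_{(i,u)}(t)=\varphi_t\bigl(\tfrac{1}{1+y_i(u)}\bigr)$.

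Next I would note that $z\mapsto\bigl(t\mapsto\varphi_t(z)\bigr)$ is a homomorphism from the multiplicative group $\mathbb{P}_{\mathrm{univ}}(y)$ to $\EuScript{C}(\mathcal{I})$, hence induces a homomorphism $\bigwedge^2\mathbb{P}_{\mathrm{univ}}(y)\to\bigwedge^2\EuScript{C}(\mathcal{I})$. Applying it to relation \eqref{eq:const2} of Proposition \ref{prop:const} gives $\sum_{(i,u)\in S_+}f_{(i,u)}(t)\wedge\bigl(1-f_{(i,u)}(t)\bigr)=0$, which is exactly the constancy condition \eqref{eq:const1}. Theorem \ref{thm:const} then applies and shows that $\Phi(t):=\sum_{(i,u)\in S_+}L(f_{(i,u)}(t))$ is constant on $\mathcal{I}$.

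It then remains to compute $\Phi$ by letting $t\to0^+$. Using the factorization \eqref{eq:Yfact} of $y_i(u)$ through the $F$-polynomials of the seed $(B(u),x(u),y(u))$, I would write $[y_i(u)]_{\mathbf{T}}=\prod_j y_j^{c_{ji}(u)}$. Each $F$-polynomial has constant term $1$ and nonnegative coefficients by Theorem \ref{thm:pos}, so applying $\varphi_t$ to it tends to $1$ as $t\to0^+$, whereas $\varphi_t([y_i(u)]_{\mathbf{T}})=\bigl(\prod_j a_j^{c_{ji}(u)}\bigr)\,t^{\sum_j c_{ji}(u)}$. By Theorem \ref{thm:pos}(a) the exponents $c_{ji}(u)$ are either all nonnegative with positive sum (when $[y_i(u)]_{\mathbf{T}}$ is a positive monomial) or all nonpositive with negative sum (when it is negative); hence $\varphi_t(y_i(u))\to0$ in the positive case and $\varphi_t(y_i(u))\to\infty$ in the negative case. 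Consequently $f_{(i,u)}(t)\to0$, so $L(f_{(i,u)}(t))\to L(0)=0$, in the positive case, while $f_{(i,u)}(t)\to1$, so $L(f_{(i,u)}(t))\to L(1)=\pi^2/6$, in the negative case. Summing the finitely many terms over $S_+$ and using the definition of $N_-$ gives $\Phi(t)\to N_-\,\pi^2/6$; since $\Phi$ is constant, $\Phi\equiv N_-\,\pi^2/6$, and evaluation at $t=1$ is \eqref{eq:DI1}. Identity \eqref{eq:DI'1} then follows either by repeating the argument with $1-f_{(i,u)}$ in place of $f_{(i,u)}$, or from \eqref{eq:euler} together with $N_++N_-=\#S_+$ (again by Theorem \ref{thm:pos}(a)).

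Within this theorem the genuinely new input is Proposition \ref{prop:const}; granting it, the reasoning is largely formal, and the only place requiring care is the degeneration in the last paragraph, where the sign-coherence of the tropical coefficients and the normalization of the $F$-polynomials (Theorem \ref{thm:pos}) are precisely what force $\varphi_t(y_i(u))$ to $0$ or to $\infty$, and where continuity of $L$ on $[0,1]$ is used to transport the value of the constant $\Phi$ to the endpoint $t\to0^+$.
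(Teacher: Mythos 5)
Your argument is correct and is essentially the paper's proof: both rest on applying the Frenkel--Szenes constancy theorem (Theorem \ref{thm:const}) to the relation of Proposition \ref{prop:const} and then evaluating the constant at a tropical $0/\infty$ degeneration via \eqref{eq:Yfact} and Theorem \ref{thm:pos}(a),(c); you merely merge the paper's two steps (linear interpolation between arbitrary homomorphisms, then a separate limit $\varphi_t(y_i)=t$) into the single multiplicative path $\varphi_t(y_i)=a_i t$ joining $\varphi$ to the degenerate limit. One harmless inaccuracy: Theorem \ref{thm:pos} does not assert nonnegativity of the $F$-polynomial coefficients, but only the constant term $1$ is needed for $\varphi_t(F_i(u))\to 1$, so nothing is lost.
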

\begin{proof}
We concentrate on the first case \eqref{eq:DI1}.
The proof separates into two steps.

{\em Step 1: Consistency.}
Let us first show that the left hand side of \eqref{eq:DI1}
is independent of the choice of $\varphi$.
Note that any semifield homomorphism
$\varphi$ is determined by the
values $\varphi(y_i)$ ($i\in I$) of generators
of $\mathbb{P}_{\mathrm{univ}}(y)$.
Let $\varphi_0$ and
$\varphi_1$ be different homomorphisms from $\mathbb{P}_{\mathrm{univ}}(y)$
to $\mathbb{R}_+$.
Then we introduce the one parameter family of homomorphisms
$\varphi_t: \mathbb{P}_{\mathrm{univ}}(y)\rightarrow \mathbb{R}_+$
 ($t\in [0,1]$) interpolating $\varphi_0$ to $\varphi_1$
by $\varphi_t(y_i)= (1-t) \varphi_0(y_i)+ t \varphi_1(y_i)$.
Set
$f_{i,u}(t):= \varphi_t(y_i(u)/(1+y_i(u)))$ ($(i,u)\in S_+$)
and regard them as a family of functions of $t$ 
in the interval  $[0,1]$.
Then, due to \eqref{eq:const2},
they satisfy the constancy condition \eqref{eq:const1}
with $\mathcal{I}=[0,1]$.
Then, by Theorem \ref{thm:const},
the left hand side of \eqref{eq:DI1} for $\varphi=\varphi_t$
is independent of $t$; in particular, it is the same
for $\varphi=\varphi_0$ and $\varphi_1$.
\par
{\em Step 2: Evaluation at $0/\infty$ limit.}
Since we established the constancy of the left hand side
of \eqref{eq:DI1}, we evaluate it at a certain limit of $\varphi$
where {\em each value $\varphi(y_i(u)/(1+y_i(u)))$ $((i,u)\in S_+)$
goes to either 0 or $\infty$} ($0/\infty$ limit).
Then, by \eqref{eq:L01}, the value of the left hand side
of \eqref{eq:DI1} is the total
number of $(i,u)\in S_+$ such that
the value $\varphi(y_i(u)/(1+y_i(u)))$ goes to $\infty$.
It is not obvious that such a limit exists.
However, simply
take the one parameter family of the
homomorphism $\varphi_t:
\mathbb{P}_{\mathrm{univ}}\rightarrow \mathbb{R}_+$
 ($t\in (0,\varepsilon)$) for some $\varepsilon>0$
 defined by $\varphi_t(y_i)=t$ for any $i\in I$.
Then, thanks to \eqref{eq:Yfact}
and the parts (a) and (c) of Conjecture \ref{conj:pos}/Theorem \ref{thm:pos},
the limit $\lim_{t\rightarrow 0} \varphi_t$ is indeed 
a $0/\infty$  limit,
and  the total
number of $(i,u)\in S_+$ such that
the value $\varphi(y_i(u)/(1+y_i(u)))$ goes to $\infty$
is exactly $N_-$.
\end{proof}

\begin{Remark}
In the above proof, Step 1 (constancy) is
due to \cite{Frenkel95},
and Step 2 (evaluation at $0/\infty$ limit)
is due to \cite{Chapoton05}.
\end{Remark}

\subsection{First proof of Proposition \ref{prop:const}}
Now we only have  to prove Proposition \ref{prop:const}.
We give two proofs in this and the next subsections.
The first one here is
a generalization of the
former proofs for special cases \cite{Nakanishi09,Inoue10a},
which is a somewhat brute force proof with
 `change of indices'.

To start, we claim that one can assume
all the components of $\mathbf{i}$ exhaust $I$.
In fact, if the condition does not hold,
one may reduce the index set $I$ so that the condition holds,
since doing that does not affect the  Y-system \eqref{eq:yi'2}.
Note that the condition is nothing but
the condition (A1) in Definition \ref{def:regular} with
$\nu=\mathrm{id}$.
Thus, the accompanying T-system 
has the simplified form \eqref{eq:xi'2}.
Furthermore, by the periodicity assumption
and Theorem \ref{thm:r/e} (a),
we have
\begin{align}
\label{eq:xperiod}
x_i(u+\Omega)&=x_i(u).
\end{align}

Now, let $F_i(u)$ 
 be the $F$-polynomials at $(i,u)$.

\begin{Lemma}
\label{lem:F}
The following properties hold.

\par
(a) Periodicity: $F_i(u+\Omega)=F_i(u)$.

\par
(b) For $(i,u)\in {S}_+$,
\begin{align}
\label{eq:Fi}
\begin{split}
\textstyle
F_i(u)F_i(u+\lambda_+(i,u))
&=
\left[
\frac{y_i(u)}{1+y_i(u)}
\right]_{\mathbf{T}}
\prod_{(j,v)\in P_+} F_j(v)^{H'_+(j,v;i,u)}\\
&\quad
+
\left[
\frac{1}{1+y_i(u)}
\right]_{\mathbf{T}}
\prod_{(j,v)\in P_+} F_j(v)^{H'_-(j,v;i,u)}.
\end{split}
\end{align}
\par
(c) For $(i,u)\in{S}_+$, 
\begin{align}
\label{eq:yi1}
y_i(u)&= [y_i(u)]_{\mathbf{T}}
\frac
{
\displaystyle
\prod_{(j,v)\in P_+} F_j(v)^{H'_+(j,v;i,u)}
}
{
\displaystyle
\prod_{(j,v)\in P_+} F_j(v)^{H'_-(j,v;i,u)}
},\\
\label{eq:yi2}
1+y_i(u)&= [1+y_i(u)]_{\mathbf{T}}
\frac
{
F_i(u)F_i(u+\lambda_+(i,u))
}
{
\displaystyle
\prod_{(j,v)\in P_+} F_j(v)^{H'_-(j,v;i,u)}
}.
\end{align}
\end{Lemma}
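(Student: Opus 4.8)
The plan is to read off all three statements from the $F$-polynomial avatar of the T-system with coefficients, using the factorization \eqref{eq:Yfact}. Part (a) is immediate: by \eqref{eq:xperiod} we have $x_i(u+\Omega)=x_i(u)$ in $(\mathbb{Q}\mathbb{P}_{\mathrm{univ}}(y))(x)$, and $F_i(u)$ is obtained from $x_i(u)$ by applying $\pi_{\mathbf{T}}$ and then setting every $x_k=1$; hence $F_i(u+\Omega)=F_i(u)$. For part (b), I would apply the field homomorphism $\pi_{\mathbf{T}}:(\mathbb{Q}\mathbb{P}_{\mathrm{univ}}(y))(x)\to(\mathbb{Q}\mathbb{P}_{\mathrm{trop}}(y))(x)$ to the T-system \eqref{eq:xi'2} and then specialize all $x_k=1$. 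The left-hand side becomes $[x_i(u)]_{\mathbf{T}}[x_i(u+\lambda_+(i,u))]_{\mathbf{T}}\mapsto F_i(u)F_i(u+\lambda_+(i,u))$; on the right-hand side, additivity of the homomorphism splits the two summands, multiplicativity turns each $\prod x_j(v)^{H'_\pm}$ into $\prod F_j(v)^{H'_\pm}$, and the scalar prefactors $y_i(u)/(1\oplus y_i(u))$ and $1/(1\oplus y_i(u))$, being single elements of the semifield $\mathbb{P}_{\mathrm{univ}}(y)$, map to $[y_i(u)/(1+y_i(u))]_{\mathbf{T}}$ and $[1/(1+y_i(u))]_{\mathbf{T}}$ and are untouched by the specialization. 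This is exactly \eqref{eq:Fi}.

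For part (c) I would first establish \eqref{eq:yi1}. By \eqref{eq:Yfact} applied to the seed at time $u$ one has $y_i(u)=[y_i(u)]_{\mathbf{T}}\prod_{j\in I}F_j(u)^{b_{ji}(u)}$, so \eqref{eq:yi1} is equivalent to the combinatorial identity
\[
\prod_{j\in I}F_j(u)^{b_{ji}(u)}
=\frac{\prod_{(j,v)\in P_+}F_j(v)^{H'_+(j,v;i,u)}}
{\prod_{(j,v)\in P_+}F_j(v)^{H'_-(j,v;i,u)}}.
\]
By the definition \eqref{eq:tH2}, for fixed $(i,u)$ and each index $j$ with $b_{ji}(u)\neq 0$ the only forward mutation point $(j,v)$ contributing to the right-hand side is the one with $v-\lambda_-(j,v)<u<v$, i.e. the first forward mutation point of $j$ after $u$, and it contributes $F_j(v)^{b_{ji}(u)}$. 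Two observations then close the identity. First, if $(j,u)\in P_+$ then $i$ and $j$ lie in the same block of the slice, so the slice condition \eqref{eq:compat} forces $b_{ji}(u)=0$; hence every index $j$ relevant to either side has $(j,u)\notin P_+$. Second, for such a $j$ the index $j$ is not mutated at any of the steps from time $u$ up to its first forward mutation point $v$ after $u$, so $x_j(u)=x_j(v)$ in $(\mathbb{Q}\mathbb{P}_{\mathrm{univ}}(y))(x)$ and therefore $F_j(u)=F_j(v)$. Substituting $F_j(v)=F_j(u)$ collapses the right-hand side to $\prod_{j\in I}F_j(u)^{b_{ji}(u)}$, proving \eqref{eq:yi1}. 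Finally, \eqref{eq:yi2} follows from \eqref{eq:Fi}: dividing that relation by $\prod_{(j,v)\in P_+}F_j(v)^{H'_-(j,v;i,u)}$ and using \eqref{eq:yi1} to rewrite $\prod F_j(v)^{H'_+}/\prod F_j(v)^{H'_-}=y_i(u)/[y_i(u)]_{\mathbf{T}}$, the right-hand side becomes $[y_i(u)/(1+y_i(u))]_{\mathbf{T}}\,y_i(u)/[y_i(u)]_{\mathbf{T}}+[1/(1+y_i(u))]_{\mathbf{T}}=(1+y_i(u))/[1+y_i(u)]_{\mathbf{T}}$, which rearranges to \eqref{eq:yi2}.

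The delicate point is the bookkeeping in part (c): one must verify that the intervals $(v-\lambda_-(j,v),v)$ entering \eqref{eq:tH2} single out precisely the first forward mutation point of $j$ after $u$ (this uses that $v-\lambda_-(j,v)$ is the preceding forward mutation point of $j$, and that $(j,u)\notin P_+$ for the relevant $j$), and that these are exactly the seeds at which $F_j$ has not yet changed. Because the present setting only assumes condition (A1), the offsets $\lambda_\pm(i,u)$ need not be constant, so one cannot simply quote the regular-period formulas of Section \ref{subsec:TY1}; the slice condition \eqref{eq:compat} is what makes the argument go through anyway. Parts (a) and (b) are essentially formal once \eqref{eq:xperiod} and \eqref{eq:xi'2} are in hand.
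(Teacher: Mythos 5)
Your proof is correct and takes essentially the same route as the paper's, which is only a three-line sketch: (a) and (b) are stated there as "specializations" of \eqref{eq:xperiod} and \eqref{eq:xi'2}, and (c) as "rewriting \eqref{eq:Yfact} with \eqref{eq:tH2}" followed by combining with (b). Your bookkeeping — identifying the unique $(j,v)$ with $u\in(v-\lambda_-(j,v),v)$ as the first forward mutation point of $j$ after $u$, noting $F_j(v)=F_j(u)$ there, and using the slice condition \eqref{eq:compat} to dispose of the case $(j,u)\in P_+$ — supplies exactly the details the paper leaves implicit.
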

\begin{proof}
(a) This is obtained from the specialization of \eqref{eq:xperiod}.
(b) This is obtained from the specialization of \eqref{eq:xi'2}.
(c) The first equality is obtained by rewriting
\eqref{eq:Yfact} with \eqref{eq:tH2}.
The second one is obtained from the first one
and (b).
\end{proof}

\begin{proof}[Proof of Proposition \ref{prop:const}]
We prove \eqref{eq:const3}.
We put \eqref{eq:yi1} and \eqref{eq:yi2} into
the left hand side of \eqref{eq:const3},
expand it, then, sum them up into three parts as follows.

The first part consists of the terms only involving tropical
coefficients, i.e.,
\begin{align}
\label{eq:const4}
\sum_{(i,u)\in S_+}
[y_i(u)]_{\mathbf{T}}\wedge [1+y_i(u)]_{\mathbf{T}}.
\end{align}
By Theorem \ref{thm:pos}, each monomial
$[y_i(u)]_{\mathbf{T}}$ is either positive or negative.
If it is positive, then
$[y_i(u)]_{\mathbf{T}}\wedge [1+y_i(u)]_{\mathbf{T}}=
[y_i(u)]_{\mathbf{T}}\wedge 1=0$.
If it is negative, then,
$[y_i(u)]_{\mathbf{T}}\wedge [1+y_i(u)]_{\mathbf{T}}=
[y_i(u)]_{\mathbf{T}}\wedge [y_i(u)]_{\mathbf{T}}=0$.
Therefore, the sum \eqref{eq:const4} vanishes.

The second part consists of the terms involving both  tropical
coefficients and $F$-polynomials.
We separate them into five parts,
\begin{align}
\label{eq:yF}
\begin{split}
\sum_{(i,u)\in S_+}
[y_{i}(u)]_{\mathrm{T}}
\wedge
\textstyle
F_{i}(u),
&
\\
\sum_{(i,u)\in S_+}
\textstyle
[y_{i}(u)]_{\mathrm{T}}
\wedge
F_{i}(u+\lambda_+(i,u))
&=
\sum_{(i,u)\in S_+}
\textstyle
[y_{i}(u-\lambda_-(i,u))]_{\mathrm{T}}
\wedge
F_{i}(u),
\\ 
-\sum_{(i,u)\in S_+}
[y_{i}(u)]_{\mathrm{T}}
\wedge
\prod_{(j,v)\in P_+}
&
F_{j}(v)^{H'_-(j,v;i,u)}\\
&=
\sum_{(i,u)\in S_+}
\prod_{(j,v)\in P_+}
[y_j(v)]_{\mathrm{T}}^{-H'_-(i,u;j,v)}
\wedge
F_{i}(u),
\\
-\sum_{(i,u)\in S_+}
[1+y_{i}(u)]_{\mathrm{T}}
\wedge
\prod_{(j,v)\in P_+
}
&
F_{j}(v)^{H'_+(j,v;i,u)}\\
&=
\sum_{(i,u)\in S_+}
\prod_{(j,v)\in P_+}
[1+y_j(v)]_{\mathrm{T}}^{-H'_+(i,u;j,v)}
\wedge
F_{i}(u),
\\
\sum_{(i,u)\in S_+}
[1+y_{i}(u)]_{\mathrm{T}}
\wedge
\prod_{(j,v)\in P_+}
&
F_{j}(v)^{H'_-(j,v;i,u)}\\
&=
\sum_{(i,u)\in S_+}
\prod_{(j,v)\in P_+}
[1+y_{j}(v)]_{\mathrm{T}}^{H'_-(i,u;j,v)}
\wedge
F_{i}(u),
\end{split}
\end{align}
where we changed  indices and
also
used  the periodicity of $[y_i(u)]_{\mathbf{T}}$,
$F_i(u)$, $\lambda_-(i,u)$, and $H'_{\pm}(j,v;i,u)$.
Recall the relation $H'_{\pm}(i,u;j,v)=
G'_{\pm}(j,v;i,u-\lambda_-(i,u))$ in \eqref{eq:GH2'}.
Then, the sum of the above five terms vanishes due to
the  `tropical Y-system'
\begin{align}
\textstyle
[y_i\left(u-\lambda_-(i,u)\right)]_{\mathbf{T}}
[y_i\left(u\right)]_{\mathbf{T}}
&=
\frac{
\displaystyle
\prod_{(j,v)\in P_+} [1+y_j(v)]_{\mathbf{T}}^{G'_+(j,v;i,u-\lambda_-(i,u))}
}
{
\displaystyle
\prod_{(j,v)\in P_+} [1+y_j(v)^{-1}]_{\mathbf{T}}^{G'_-(j,v;i,u-\lambda_-(i,u))}
},
\end{align}
which is a specialization of  \eqref{eq:yi'2}.
\par
The third part consists of the terms involving only
$F$-polynomials. 
It turns out that
this part requires the most elaborated treatment.
We separate them into three parts,
\begin{align}
\label{eq:FF1}
\mathrm{(A)}&=\sum_{(i,u)\in S_+} 
\frac
{
\displaystyle
\prod_{(j,v)\in P_+} F_j(v)^{H'_+(j,v;i,u)}
}
{
\displaystyle
\prod_{(j,v)\in P_+} F_j(v)^{H'_-(j,v;i,u)}
}
\wedge
\textstyle
F_i(u),\\
\label{eq:FF2}
\mathrm{(B)}&=\sum_{(i,u)\in S_+} 
\frac
{
\displaystyle
\prod_{(j,v)\in P_+} F_j(v)^{H'_+(j,v;i,u)}
}
{
\displaystyle
\prod_{(j,v)\in P_+} F_j(v)^{H'_-(j,v;i,u)}
}
\wedge
\textstyle
F_i(u+\lambda_+(i,u)),\\
\label{eq:FF3}
\mathrm{(C)}&=\sum_{(i,u)\in S_+} 
\displaystyle
\prod_{(j,v)\in P_+} F_j(v)^{H'_+(j,v;i,u)}
\wedge
\prod_{(j,v)\in P_+} F_j(v)^{-H'_-(j,v;i,u)}.
\end{align}
Let us rewrite each term so that their cancellation becomes manifest.

The first term (A) is rewritten as follows.
\begin{align*}
\mathrm{(A)}&=\sum_{(i,u)\in S_+} 
\prod_{\scriptstyle (j,v)\in P_+ \atop
\scriptstyle
u\in (v-\lambda_-(j,v),v)
} 
F_j(v)^{b_{ji}(u)}
\wedge
\textstyle
F_i(u)\\
&=\sum_{\scriptstyle (i,u)\in S_+,\, (j,v)\in P_+ \atop
\scriptstyle
u\in (v-\lambda_-(j,v),v)
} 
\textstyle
b_{ji}(u)
F_j(v)
\wedge
\textstyle
F_i(u)\\
&=\sum_{\scriptstyle (i,u)\in S_+,\, (j,v)\in P_+ \atop
\scriptstyle
v\in (u-\lambda_-(i,u),u)
} 
\textstyle
b_{ji}(v)
F_j(v)
\wedge
\textstyle
F_i(u)\\
&
=
\frac{1}{2}
\sum_{\scriptstyle (i,u)\in S_+,\, (j,v)\in P_+ \atop
\scriptstyle
(u-\lambda_-(i,u),u)
\cap(v-\lambda_-(j,v),v)\neq \emptyset
} 
\textstyle
b_{ji}(\min(u,v))
F_j(v)
\wedge
\textstyle
F_i(u).
\end{align*}
Here the third line is obtained from
the second one by the exchange $(i,u)\leftrightarrow (j,v)$ of indices,
the skew symmetric property $b_{ji}(u)=-b_{ij}(u)$, and the periodicity;
the last line is obtained by averaging the
second and the third ones; therefore, there is
the factor $1/2$ in the front.
We also mention that, in the last line,
the pair $(i,u), (j,v)$ with
$u=v$ does not contribute
to the sum, because  in that case we have
$b_{ji}(u)=0$ due to the condition \eqref{eq:compat}.

Similarly, the second term (B) is rewritten as follows.
\begin{align*}
\mathrm{(B)}&=\sum_{(i,u)\in S_+} 
\prod_{\scriptstyle (j,v)\in P_+ \atop
\scriptstyle
u\in (v-\lambda_-(j,v),v)
} 
F_j(v)^{b_{ji}(u)}
\wedge
\textstyle
F_i(u+\lambda_+(i,u))\\
&=\sum_{\scriptstyle (i,u)\in S_+,\, (j,v)\in P_+ \atop
\scriptstyle
u-\lambda_-(i,u)\in (v-\lambda_-(j,v),v)
} 
\textstyle
b_{ji}(u-\lambda_-(i,u))
F_j(v)
\wedge
\textstyle
F_i(u)\\
&
=
\frac{1}{2}
\sum_{\scriptstyle (i,u)\in S_+,\, (j,v)\in P_+ \atop
\scriptstyle
(u-\lambda_-(i,u),u)
\cap(v-\lambda_-(j,v),v)\neq \emptyset
} 
\textstyle
b_{ji}(\max(u-\lambda_-(i,u), v-\lambda_-(j,v)))
F_j(v)
\wedge
\textstyle
F_i(u).
\end{align*}

The third term (C) is written as follows.
\begin{align*}
\mathrm{(C)}&=\sum_{(i,u),(j,v)\in P_+} 
\Biggl(
\sum_{\scriptstyle (k,w)\in S_+ \atop
{
\scriptstyle
k\in (u-\lambda_-(i,u),u)\cap
 (v-\lambda_-(j,v),v)\atop
\scriptstyle
b_{jk}(w)>0,\, b_{ik}(w)<0
}
} 
b_{jk}(w) b_{ik}(w)
\Biggr)
F_j(v)
\wedge
\textstyle
F_i(u)\\
&=\sum_{(i,u),(j,v)\in P_+} 
\Biggl(
\sum_{\scriptstyle (k,w)\in S_+ \atop
{
\scriptstyle
k\in (u-\lambda_-(i,u),u)\cap
 (v-\lambda_-(j,v),v)\atop
\scriptstyle
b_{jk}(w)<0,\, b_{ik}(w)>0
}
} 
-b_{jk}(w) b_{ik}(w)
\Biggr)
F_j(v)
\wedge
\textstyle
F_i(u)\\
&=\frac{1}{2}\sum_{(i,u)\in S_+, (j,v)\in P_+} 
\Biggl(
\sum_{\scriptstyle (k,w)\in P_+ \atop
\scriptstyle
k\in (u-\lambda_-(i,u),u)\cap
 (v-\lambda_-(j,v),v)
}\\
&\hskip100pt 
-\frac{1}{2}\Bigl( b_{jk}(w) |b_{ki}(w)|
+ |b_{jk}(w)|b_{ki}(w)\Bigr)
\Biggr)
F_j(v)
\wedge
\textstyle
F_i(u).
\end{align*}
Here the second line is obtained from
the first one by the exchange $(i,u)\leftrightarrow (j,v)$ of indices;
the last line is obtained by averaging the
first and the second ones; therefore, there is
the factor $1/2$ in the front.
We also mention that,
 in the last line,
$(k,w)$ with
 $b_{jk}(w)$ and  $b_{ik}(w)$ having  the same sign
 does not contribute
to the second sum,
 because in that case $b_{jk}(w)|b_{ki}(w)|+|b_{jk}(w)|b_{ki}(w)=0$.

Now the sum  $\mathrm{(A)}+\mathrm{(B)}+\mathrm{(C)}$ cancel
due to the mutation of matrices (see \eqref{eq:Bmut})
\begin{align}
\begin{split}
&\textstyle
b_{ji}(\min(u,v))
= -b_{ji}(\max(u-\lambda_i(i,u), v-\lambda_j(j,v)))\\
&\quad + 
\sum_{\scriptstyle (k,w)\in P_+ \atop
\scriptstyle
k\in (u-\lambda_-(i,u),u)\cap
 (v-\lambda_-(j,v),v)
} 
\frac{1}{2}\Bigl( b_{jk}(w) |b_{ki}(w)|
+ |b_{jk}(w)|b_{ki}(w)\Bigr)
\end{split}
\end{align}
for $(u-\lambda_-(i,u),u)
\cap(v-\lambda_-(j,v),v)\neq \emptyset$.
\end{proof}

\subsection{
Local version of constancy condition
and
second proof of 
Proposition \ref{prop:const} 
}
\label{subsec:local}

Here we give a `local' version of Proposition
\ref{prop:const}, thereby providing an alternative
 proof of Proposition \ref{prop:const}.
The formula here is parallel 
to the ones in
\cite[Lemma 6.1]{Fock09} and \cite[Lemma 2.17]{Fock07}
by Fock and Goncharov.
In fact, we have reached the formula while trying
to interpolate  Proposition \ref{prop:const} and
their results. 
An important difference from their formulas
is the use of $F$-polynomials
in \eqref{eq:W}.

Let $\mathcal{A}(B,x,y)$ be any cluster algebra.
For each seed $(B',x',y')$, we set
\begin{align}
\label{eq:W}
V'&:=\frac{1}{2}\sum_{i\in I}  F'_i \wedge (y'_i
[y'_i]_{\mathbf{T}})
=\sum_{i\in I}
F'_i \wedge y'_i
+\frac{1}{2}\sum_{i,j\in I}
b'_{ij}F'_i\wedge F'_j,
\end{align}
where $F'_i=F'_i(y)$ ($i\in I$) are the $F$-polynomials for
 $(B',x',y')$.

\begin{Proposition}[Local constancy, {cf.~\cite[Lemma 6.1]{Fock09},
\cite[Proposition 2.14 \& Lemma 2.17]{Fock07}}]
\label{prop:local}
Let $(B',x',y')$ and $(B'',x'',y'')$
be any seeds such that 
 $(B'',x'',y'')=\mu_k(B',x',y')$.
Then, we have the following relation in 
$\bigwedge ^2 \mathbb{P}_{\mathrm{univ}}(y)$.
\begin{align}
\label{eq:local}
V''-V' = y_k' \wedge (1+y'_k).
\end{align}
\end{Proposition}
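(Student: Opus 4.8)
The plan is to compute $V''-V'$ by a direct expansion through the mutation rules \eqref{eq:Bmut}, \eqref{eq:coef} and the $F$-polynomial exchange relation, and to check that everything cancels except the single term $y'_k\wedge(1+y'_k)$; I carry out the computation for $B$ skew symmetric, the case relevant to Theorem~\ref{thm:DI}. Write $k$ for the mutation direction and recall that $F''_i=F'_i$ for $i\neq k$ (the cluster variable, hence its $F$-polynomial, changes only at $k$), that $y''_k=(y'_k)^{-1}$, and that $b''_{ij}$ is given by \eqref{eq:Bmut}. Using the second form of $V'$ in \eqref{eq:W}, the difference splits as $V''-V'=\mathrm{(A)}+\mathrm{(B)}+\mathrm{(C)}$ with
\begin{align*}
\mathrm{(A)}&=F''_k\wedge y''_k-F'_k\wedge y'_k=-(F'_kF''_k)\wedge y'_k,\\
\mathrm{(B)}&=\sum_{i\neq k}F'_i\wedge\frac{y''_i}{y'_i},\qquad
\mathrm{(C)}=\frac12\sum_{i,j\in I}\bigl(b''_{ij}F''_i\wedge F''_j-b'_{ij}F'_i\wedge F'_j\bigr),
\end{align*}
where bilinearity of $\wedge$ gives $\mathrm{(A)}$ at once.

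For $\mathrm{(B)}$, rewriting \eqref{eq:coef} in $\mathbb{P}_{\mathrm{univ}}(y)$ (so $1\oplus(y'_k)^{-1}=(1+y'_k)/y'_k$) yields $y''_i/y'_i=(1+y'_k)^{-b'_{ki}}(y'_k)^{[b'_{ki}]_+}$, and hence, using skew symmetry $b'_{ki}=-b'_{ik}$, $\mathrm{(B)}=P\wedge(1+y'_k)+P_-\wedge y'_k$, where I abbreviate $P:=\prod_{j}(F'_j)^{b'_{jk}}$ and $P_{\pm}:=\prod_{j}(F'_j)^{[\pm b'_{jk}]_+}$, so that $P=P_+P_-^{-1}$. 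For $\mathrm{(C)}$, split the double sum according to whether $i$ or $j$ equals $k$: the terms with $i,j\neq k$ contribute $\tfrac14\sum_{i,j}(|b'_{ik}|b'_{kj}+b'_{ik}|b'_{kj}|)F'_i\wedge F'_j$, which by the identity $\tfrac12(|b'_{ik}|b'_{kj}+b'_{ik}|b'_{kj}|)=[b'_{ik}]_+[b'_{kj}]_+-[-b'_{ik}]_+[-b'_{kj}]_+$, skew symmetry, and antisymmetry of $\wedge$ collapses to $P_+\wedge P_-$; the terms with $i=k$ or with $j=k$ each equal $\tfrac12(F'_kF''_k)\wedge P$. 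Thus $\mathrm{(C)}=P_+\wedge P_-+(F'_kF''_k)\wedge P$.

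Next I would substitute two factorization identities from \cite{Fomin07}: \eqref{eq:Yfact} for $i=k$, namely $y'_k=[y'_k]_{\mathbf T}\,P$, and the single-mutation $F$-polynomial exchange relation $F'_kF''_k=[1+y'_k]_{\mathbf T}^{-1}(1+y'_k)\,P_-$ (equivalently \eqref{eq:yi2}/Lemma~\ref{lem:F}(c) for one mutation). After expanding every wedge and using these two identities repeatedly, together with $P=P_+P_-^{-1}$ and the antisymmetry of $\wedge$, all terms cancel in pairs — the wedges involving $y'_k$, those involving $1+y'_k$, those involving $P$, and the purely $F$-polynomial ones — except for the target $y'_k\wedge(1+y'_k)$ and one residual term $[1+y'_k]_{\mathbf T}\wedge[y'_k]_{\mathbf T}$. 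This last term vanishes by part (a) of Conjecture~\ref{conj:pos}/Theorem~\ref{thm:pos}: $[y'_k]_{\mathbf T}$ is a positive or negative Laurent monomial in $y$, so $[1+y'_k]_{\mathbf T}=1\oplus[y'_k]_{\mathbf T}$ equals $1$ or $[y'_k]_{\mathbf T}$ in $\mathbb{P}_{\mathrm{trop}}(y)$, and in either case wedges to $0$ with $[y'_k]_{\mathbf T}$. This gives \eqref{eq:local}.

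The step needing care is the bookkeeping of the pure $F'_i\wedge F'_j$ contributions — the quadratic part of $V'$, the correction term from \eqref{eq:Bmut} inside $\mathrm{(C)}$, and the products $P,P_{\pm}$ generated by \eqref{eq:Yfact} and the $F$-polynomial recursion — along with keeping the sign conventions for $\wedge$ straight; the crucial inputs there are skew symmetry and the relation $P=P_+P_-^{-1}$. The only non-formal ingredient in the whole argument is the vanishing of $[1+y'_k]_{\mathbf T}\wedge[y'_k]_{\mathbf T}$, i.e. the monomial property of tropical coefficients (Theorem~\ref{thm:pos}); everything else is forced by \eqref{eq:Bmut}, \eqref{eq:coef}, \eqref{eq:Yfact}, the $F$-polynomial recursion, and the bilinearity and antisymmetry of $\wedge$. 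Finally, summing \eqref{eq:local} along a mutation sequence realizing a period of $(B,x,y)$, the $F$-polynomials telescope by periodicity and one recovers Proposition~\ref{prop:const}, which is the intended application.
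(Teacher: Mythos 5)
Your proof is correct, and it is essentially the paper's own argument run in the opposite direction: where you expand $V''-V'$ via the mutation rules and then substitute the factorizations $y'_i=[y'_i]_{\mathbf T}\prod_j F_j'{}^{b'_{ji}}$ and $1+y'_k=[1+y'_k]_{\mathbf T}\,F'_kF''_k/P_-$, the paper expands the right-hand side $y'_k\wedge(1+y'_k)$ using the same two identities and regroups into tropical, mixed, and pure-$F$ parts. The key inputs are identical in both versions --- equations \eqref{eq:Yfact} and \eqref{eq:yi22}, the matrix mutation rule, and Theorem \ref{thm:pos} to kill $[y'_k]_{\mathbf T}\wedge[1+y'_k]_{\mathbf T}$ --- so the difference is purely one of bookkeeping.
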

\begin{proof}
Our proof is parallel to the first proof of Proposition \ref{prop:const}.
We use the following property of the corresponding $F$-polynomials,
which are derived as Lemma~\ref{lem:F}.

\par
(a) For the above $k$,
\begin{align}
\label{eq:Fi2}
\begin{split}
\textstyle
F'_kF''_k
&=
\left[
\frac{y'_k}{1+y'_k}
\right]_{\mathbf{T}}
\prod_{j:\, b'_{jk}>0} F'_j{}^{b'_{jk}}
+
\left[
\frac{1}{1+y'_k}
\right]_{\mathbf{T}}
\prod_{j:\, b'_{jk}<0} F'_j{}^{-b'_{jk}}.
\end{split}
\end{align}
\par
(b) For any $i\in I$,
\begin{align}
\label{eq:yi11}
y'_i&= [y'_i]_{\mathbf{T}}
\frac
{
\displaystyle
\prod_{j:\, b'_{ji}>0} F'_j{}^{b'_{ji}}
}
{
\displaystyle
\prod_{j:\, b'_{ji}<0} F'_j{}^{-b'_{ji}}
}
=
[y'_i]_{\mathbf{T}}\prod_{j\in I} F'_j{}^{b'_{ji}}.
\end{align}
\par
(c) For the above $k$,
\begin{align}
\label{eq:yi22}
1+y'_k&= [1+y'_k]_{\mathbf{T}}
\frac
{
F'_kF''_k
}
{
\displaystyle
\prod_{j:\, b'_{jk}<0} F'_j{}^{-b'_{jk}}
}.
\end{align}
We put \eqref{eq:yi11} and \eqref{eq:yi22} into
the right hand side of \eqref{eq:local},
expand it, then, sum them up into three parts as before.

The first part consists of the single term
$[y'_k]_{\mathbf{T}}\wedge [1+y'_k]_{\mathbf{T}}$,
which vanishes by the same reason as before.

The second part consists of the terms involving both  tropical
coefficients and $F$-polynomials.
We separate them into five parts as \eqref{eq:yF}.
By an easy calculation as before,
they are summarized as follows.
\begin{align}
\sum_{i\in I}
F''_i \wedge [y''_i]_{\mathbf{T}}
- 
\sum_{i\in I}
F'_i \wedge [y'_i]_{\mathbf{T}}.
\end{align}

The third part consists of the terms involving only
$F$-polynomials. 
We separate them into three parts as \eqref{eq:FF1}--\eqref{eq:FF3}.
Then, they are summarized as follows.
\begin{align}
\frac{1}{2}
\sum_{i\in I}
 F''_i \wedge \frac{y''_i}{[y''_i]_{\mathbf{T}}}
-
\frac{1}{2}
\sum_{i\in I}
 F'_i \wedge \frac{y'_i}{[y'_i]_{\mathbf{T}}}.
\end{align}
Therefore, we obtain the claim.
\end{proof}

Proposition \ref{prop:const}
is immediately obtained from Proposition \ref{prop:local}
as follows.

For any $u\in \mathbb{Z}$, we set
\begin{align}
V(u)&=\frac{1}{2}\sum_{i\in I}  F_i(u) \wedge (
y_i(u) [y_i(u)]_{\mathbf{T}}).
\end{align}
Note that $V(0)=0$ because $F_i(0)=1$.
Then, by Proposition \ref{prop:local}, we have,
for $u>0$,
\begin{align}
V(u) = \sum_{
\scriptstyle
(k,v)\in P_+
\atop
\scriptstyle
0\leq v < u }y_k(v) \wedge (1+y_k(v)).
\end{align}
Then, from the periodicity $V(\Omega)=V(0)=0$,
we obtain \eqref{eq:const3}.

\subsection{Skew symmetrizable case}
\label{subsec:dilog2}

For a {\em skew symmetrizable\/} matrix $B$,
the dilogarithm identities should be slightly modified.

Let $D=\mathrm{diag}(d_i)_{i\in I}$ be the (left) skew symmetrizer
of $B$, namely,
$d_i b_{ij} = - d_j b_{ji}$ holds.
Let $d=\mathrm{LCM}(d_i)_{i\in I}$ and define
$\tilde{d}_i= d/d_i \in \mathbb{N}$ ($i\in I$).
Then, $\tilde{D}=\mathrm{diag}(\tilde{d}_i)_{i\in I}$ be
 the right skew symmetrizer
 of $B$, namely,
$b_{ij}\tilde{d}_j  = -  b_{ji}\tilde{d}_i$ holds.
Let $\tilde{N}_+$ (resp. $\tilde{N}_-$) be the total number of
positive (resp. negative) monomials $[y_i(u)]_{\mathbf{T}}$
in the fundamental region $S_+$ {\em with multiplicity $\tilde{d}_i$}.
Then,  the identities \eqref{eq:DI} and  \eqref{eq:DI'}
should be modified as
\begin{align}
\label{eq:DI5}
\frac{6}{\pi^2}
\sum_{
(i,u)\in S_+
}
\tilde{d}_i L\left(
\varphi
\left(
\frac{y_i(u)}{1+y_i(u)}
\right)
\right)
&=\tilde{N}_-,\\
\label{eq:DI'6}
\frac{6}{\pi^2}
\sum_{
(i,u)\in S_+
}
\tilde{d}_i
L\left(
\varphi
\left(
\frac{1}{1+y_i(u)}
\right)
\right)
&=\tilde{N}_+,
\end{align}
and \eqref{eq:W} and \eqref{eq:local} should be modified as 
\begin{align}
\label{eq:W'}
V':=\frac{1}{2}\sum_{i\in I}  \tilde{d}_i F'_i \wedge (y'_i
[y'_i]_{\mathbf{T}})
&=\sum_{i\in I}
\tilde{d}_i F'_i \wedge y'_i
+\frac{1}{2}\sum_{i,j\in I}
b'_{ij} \tilde{d}_j F'_i\wedge F'_j,\\
\label{eq:local'}
V''-V' &= \tilde{d}_k y_k' \wedge (1+y'_k).
\end{align}

Unfortunately, for skew symmetrizable matrices
one cannot yet prove \eqref{eq:DI5}, \eqref{eq:DI'6},
and \eqref{eq:local'}.
This is because Conjecture \ref{conj:pos} (the facts (a) and (c),
in particular) is not yet proved,
and we need it for the existence of the $0/\infty$ limit and also
for $[y'_k]_{\mathbf{T}}\wedge [1+y'_k]_{\mathbf{T}}$ to vanish.
However, one can prove the rest by repeating the same argument.
To summarize, 
we have the following intermediate result
in the skew symmetrizable case.

\begin{Theorem}
\label{thm:DI2}
Suppose that Conjecture \ref{conj:pos} is true.
Then, for any  skew symmetrizable matrix,
we have \eqref{eq:local'} and 
the identities \eqref{eq:DI5} and \eqref{eq:DI'6}.
\end{Theorem}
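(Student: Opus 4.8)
The plan is to mirror the skew symmetric proof of Theorem~\ref{thm:DI}, but to run it through the \emph{local} version of the constancy condition from Section~\ref{subsec:local} rather than the brute-force one, since the local computation is the cleaner place to insert the multiplicities $\tilde{d}_i$. Concretely, I would first prove the weighted local constancy \eqref{eq:local'} for $V'$ as in \eqref{eq:W'}, then telescope it over one period to obtain a weighted analogue of \eqref{eq:const3}, and finally repeat, essentially verbatim, the two steps (Frenkel--Szenes constancy and Chapoton's $0/\infty$ evaluation) appearing in the proof of Theorem~\ref{thm:DI}, with each forward mutation point $(i,u)\in S_+$ now counted with multiplicity $\tilde{d}_i$.

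For \eqref{eq:local'}, the proof of Proposition~\ref{prop:local} carries over once the weights are tracked: the $F$-polynomial recursion \eqref{eq:Fi2} and the factorizations \eqref{eq:yi11}, \eqref{eq:yi22} are unaffected. Substituting \eqref{eq:yi11} and \eqref{eq:yi22} into $\tilde{d}_k\, y_k'\wedge(1+y_k')$, expanding, and grouping as before, the terms split into (i) the single tropical term $\tilde{d}_k\,[y_k']_{\mathbf{T}}\wedge[1+y_k']_{\mathbf{T}}$, which vanishes because $[y_k']_{\mathbf{T}}$ is a positive or negative Laurent monomial (Conjecture~\ref{conj:pos}(a)); (ii) a mixed part which, exactly as in the skew symmetric case, telescopes to $\sum_{i}\tilde{d}_i F_i''\wedge[y_i'']_{\mathbf{T}}-\sum_{i}\tilde{d}_i F_i'\wedge[y_i']_{\mathbf{T}}$; and (iii) a purely $F$-polynomial part. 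The cancellation inside (iii) is the only place where skew symmetry was genuinely used; it is now replaced by the antisymmetry $b_{ij}'\tilde{d}_j=-b_{ji}'\tilde{d}_i$ of the right skew symmetrizer, together with its invariance under mutation \cite[Proposition 4.5]{Fomin02} and the matrix mutation rule \eqref{eq:Bmut}, and the outcome is $\frac{1}{2}\sum_i\tilde{d}_i F_i''\wedge\bigl(y_i''/[y_i'']_{\mathbf{T}}\bigr)-\frac{1}{2}\sum_i\tilde{d}_i F_i'\wedge\bigl(y_i'/[y_i']_{\mathbf{T}}\bigr)$. Summing (i)--(iii) gives \eqref{eq:local'}.

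Next, set $V(u):=\frac{1}{2}\sum_{i\in I}\tilde{d}_i F_i(u)\wedge\bigl(y_i(u)[y_i(u)]_{\mathbf{T}}\bigr)$, so $V(0)=0$ since $F_i(0)=1$. Telescoping \eqref{eq:local'} along the sequence of mutations gives $V(\Omega)=\sum_{(i,u)\in S_+}\tilde{d}_i\,y_i(u)\wedge(1+y_i(u))$, and the periodicity $V(\Omega)=V(0)=0$ yields the weighted constancy condition
\[
\sum_{(i,u)\in S_+}\tilde{d}_i\,y_i(u)\wedge(1+y_i(u))=0
\]
in $\bigwedge^2\mathbb{P}_{\mathrm{univ}}(y)$, where as in the skew symmetric case one first makes the harmless reduction to the situation in which the components of $\mathbf{i}$ exhaust $I$, so that \eqref{eq:xi'2} and \eqref{eq:xperiod} are available. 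With this identity replacing \eqref{eq:const3}, Steps 1 and 2 of the proof of Theorem~\ref{thm:DI} go through unchanged: in Step~1 one applies Theorem~\ref{thm:const} to the finite family obtained by listing $f_{i,u}(t)=\varphi_t\bigl(y_i(u)/(1+y_i(u))\bigr)$ with multiplicity $\tilde{d}_i$, the constancy condition \eqref{eq:const1} being exactly the weighted identity above; in Step~2 one takes $\varphi_t(y_i)=t$ and lets $t\to 0$, which is a genuine $0/\infty$ limit by \eqref{eq:Yfact} and Conjecture~\ref{conj:pos}(a),(c), and the number of points (counted with multiplicity $\tilde{d}_i$) whose argument tends to $\infty$ is $\tilde{N}_-$. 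This proves \eqref{eq:DI5}, and \eqref{eq:DI'6} follows from \eqref{eq:euler}.

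Apart from the bookkeeping of the weights, the only real obstacle is the two appeals to Conjecture~\ref{conj:pos}: the vanishing of $[y_k']_{\mathbf{T}}\wedge[1+y_k']_{\mathbf{T}}$ in \eqref{eq:local'} (part (a)), and the existence of the $0/\infty$ limit together with the identification of the count as $\tilde{N}_-$ in Step~2 (parts (a) and (c)). Since the conjecture is assumed here, the only genuinely new verification compared with the skew symmetric case is the cancellation of the purely $F$-polynomial terms in \eqref{eq:local'}, which, as indicated above, rests on the antisymmetry and mutation-invariance of the right skew symmetrizer $\tilde{D}$.
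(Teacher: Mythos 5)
Your proposal is correct and follows essentially the same route as the paper: the paper's own justification is precisely the remark that one repeats the skew symmetric argument with the weights $\tilde{d}_i$ inserted via the right skew symmetrizer (as in \eqref{eq:W'} and \eqref{eq:local'}), the only obstructions being parts (a) and (c) of Conjecture \ref{conj:pos}, which the theorem assumes. Your write-up merely makes explicit what the paper leaves implicit, namely that the purely $F$-polynomial cancellation now rests on $b'_{ij}\tilde{d}_j=-b'_{ji}\tilde{d}_i$ and its mutation invariance, and that the telescoping of \eqref{eq:local'} over one period yields the weighted constancy condition feeding into the Frenkel--Szenes and Chapoton steps.
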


\subsection{Concluding remark}
Let us conclude the paper by raising one natural question.

\begin{Problem}
Describe the numbers $N_{\pm}$ and $N'_{\pm}$
in  Theorems \ref{thm:DI} and \ref{thm:DI2}
by some other ways.
\end{Problem}


\newcommand{\etalchar}[1]{$^{#1}$}
\providecommand{\bysame}{\leavevmode\hbox to3em{\hrulefill}\thinspace}
\providecommand{\MR}{\relax\ifhmode\unskip\space\fi MR }
\providecommand{\MRhref}[2]{%
  \href{http://www.ams.org/mathscinet-getitem?mr=#1}{#2}
}
\providecommand{\href}[2]{#2}

\end{document}